\documentclass{amsart}
\usepackage{macros}
\standardsettings
\colorcommentstrue

\setcounter{tocdepth}{1}

\newcommand{\bS}{\mathbb{S}}
\newcommand{\emb}{\Phi}
\newcommand{\Qrank}{{p_\Q}}
\newcommand{\Rrank}{{p_\R}}
\newcommand{\Krank}{{p_\K}}
\renewcommand{\matrix}{\phi}
\newcommand{\cee}{c}
\newcommand{\mindist}{\delta}
\newcommand{\numberq}{T}
\renewcommand{\height}{H}
\renewcommand{\ss}{\mathbf s}
\renewcommand{\prim}{\mathrm{pr}}

\newcommand{\aff}{{\mathrm{aff}}}
\DeclareMathOperator{\Span}{Span}
\newcommand{\subgp}{\subset}

\newcommand{\form}{Q}
\newcommand{\nform}{R}

\newcommand{\orth}{\mathrm O}
\renewcommand{\dist}{\operatorname{dist}}
\renewcommand{\A}{\mathrm A}

\newcommand\eq[2]{
\begin{equation}
\label{eq:#1}
{#2}
\end{equation}
}
\newcommand{\equ}[1]{\eqref{eq:#1}}

\renewcommand{\HD}{\dim}

\theoremstyle{theorem}
\maketheorem{affinecorollary}{Affine Corollary}
\theoremstyle{definition}
\maketheorem{affinecorollaryD}{Affine Corollary}

\draftfalse

\begin{document}
\title{Intrinsic Diophantine approximation on quadric hypersurfaces 
}

\authorlior\authordmitry\authorkeith\authordavid


\begin{abstract}
We consider the question of how well points in a quadric hypersurface $M\subset\mathbb R^d$ can be approximated by rational points of $\mathbb Q^d\cap M$. This contrasts with the more common setup of approximating points in a manifold by all rational points in $\mathbb Q^d$. We provide complete answers to major questions of Diophantine approximation in this context. Of particular interest are the impact of the real and rational ranks of the defining quadratic form, quantities whose roles in Diophantine approximation have never been previously elucidated. Our methods include a correspondence between the intrinsic Diophantine approximation theory on a rational quadric hypersurface and the dynamics of the group of projective transformations which preserve that hypersurface, similar to earlier results in the non-intrinsic setting due to Dani ('86) and Kleinbock--Margulis ('99).
\end{abstract}

\date{December 2020}
\maketitle

\tableofcontents

\section{Introduction and motivation}\label{introduction}

Classical theorems in Diophantine approximation theory address questions regarding the way points $\xx\in\R^d$ are approximated by rational points, considering the trade-off between the height of the rational point -- the size of its denominator -- and its distance to $\xx$; see \cite{Cassels, Schmidt2} for a general introduction. Often $\xx$ is assumed to lie on a certain subset of $\R^d$, for example a smooth manifold $M$, leading to \emph{Diophantine approximation on manifolds\/}. This area of research has experienced rapid progress during the last two decades, owing much of it to methods coming from flows on homogeneous spaces.

It was observed in \cite{DickinsonDodson, Drutu, BDL} that all sufficiently good rational approximants to points on certain rational varieties must in fact be \emph{intrinsic} -- that is, they are rational points lying on the variety itself. These results, in part, have motivated a new field of \emph{intrinsic approximation}, which examines the degree to which points on a manifold or variety can be approximated by rationals lying on that same subset. Questions about the quality of these approximations were raised already by 
Lang \cite{Lang_diophantine} and 
Mahler \cite{Mahler}. 
Following some recent results on  quadric hypersurfaces \cite{Schmutz, GGN1, GGN2} and a comprehensive treatment of Diophantine approximation on spheres \cite{KleinbockMerrill},  this paper seeks to fully explore the topic of intrinsic approximation on quadrics.
One of the most novel and important aspects of our work is an elucidation of the role of the $\Q$-rank and the $\R$-rank of the defining quadratic form (see Definition \ref{definitionrank}). 
It turns out there are qualitative differences between the intrinsic approximation theories of forms with different rank pairs, highlighting the importance of rank, rather than the dimension of the hypersurface. In particular, we will see below that our Dirichlet-type theorem, Theorem \ref{theoremdirichletquadratic}, is \emph{independent} of the dimension $d$, but changes depending on whether the $\Q$-rank and $\R$-rank are equal or different. We remark that \cite{KleinbockMerrill} considers only the case where both ranks equal 1, therefore the dependence on the ranks is not explored there, and significant new ideas have had to be developed in the present paper.


\begin{convention}
The symbols $\lesssim$, $\gtrsim$, and $\asymp$ will denote asymptotics; a subscript of $\plus$ indicates that the asymptotic is additive, and a subscript of $\times$ indicates that it is multiplicative. For example, $A\lesssim_{\times,K} B$ means that there exists a constant $C > 0$ (the \emph{implied constant}), depending only on $K$, such that $A\leq C B$. $A\lesssim_{\plus,\times}B$ means that there exist constants $C_1,C_2 > 0$ so that $A\leq C_1 B + C_2$. In general, dependence of the implied constant(s) on universal objects such as the manifold $M$ will be omitted from the notation.
\end{convention}

\begin{convention}
\label{convention2}
For any $\cee\geq 0$ we let
\[
\psi_\cee(q) := \frac{1}{q^\cee}\cdot
\]
\end{convention}

\begin{convention}
The symbol $\triangleleft$ will be used to indicate the end of a nested proof.
\end{convention}

\smallskip
\noindent\bf Glossary of Notation. \rm For the reader's convenience we summarize a list of notations and
terminology in the order that they appear in the sequel.

\begin{itemize}
\item $M$ \dotfill a complete metric space (Section \ref{introduction})
\item $\QQ$ \dotfill a countable subset of $M$ (Section \ref{introduction})
\item $H$ \dotfill a height function (Section \ref{introduction})
\item $\BA({\psi,M,\QQ, H})$ \dotfill the set of badly approximable points (Section \ref{introduction})
\item $\WA({\psi,M,\QQ, H})$ \dotfill the set of well approximable points (Section \ref{introduction})
\item $\A({\psi,M,\QQ, H})$ \dotfill the set of $\psi$-approximable points (Section \ref{introduction})
\item $H_\std$ \dotfill the standard height on projective space (Section \ref{subsectionquadratic})
\item $\form$ \dotfill a quadratic form on $\R^{d+1}$ (Section \ref{subsectionquadratic})
\item $L_\form$ \dotfill the light cone of $\form$ (Section \ref{subsectionquadratic})
\item $M_\form$ \dotfill a nonsingular rational quadric hypersurface (Section \ref{subsectionquadratic})
\item $\Rrank$ \dotfill the real rank of $\form$  (Section \ref{subsectionquadratic})
\item $\Qrank$ \dotfill the rational rank of $\form$  (Section \ref{subsectionquadratic})
\item $\form_\aff$ \dotfill a quadratic polynomial with integer coefficients on $\R^{d}$ (Section \ref{subsectionquadratic})
\item $M_{\form_\aff}$ \dotfill the nonsingular rational quadric hypersurface associated to $\form_\aff$ (Section \ref{subsectionquadratic})
\item $\A_{M_\form}(\psi)$ \dotfill the set of $\psi$-approximable points on $M_\form$  (Section \ref{subsectionquadratic})
\item $\BA_{M_\form}$ \dotfill the set of badly approximable points on $M_\form$  (Section \ref{subsectionquadratic})
\item $\form_0$\dotfill the exceptional quadratic form  (Section \ref{subsectionquadratic})
\item $B_\form$ \dotfill the symmetric, bilinear form associated to $\form$ (Section \ref{sectionpreliminaries})
\item $\LL_m$\dotfill  $\sum_{i = 0}^{m - 1}\R\ee_i$ (Section \ref{sectionpreliminaries})
\item $\tilde\form$ \dotfill the remainder of the form $\form$ after normalizing (Section \ref{sectionpreliminaries})
\item $\widehat\matrix$ \dotfill the reverse  of the matrix $\matrix$  (Section \ref{sectionpreliminaries})
\item $g_\matrix$\dotfill  $\left[\begin{array}{ccc}
\matrix &&\\
& I_{d + 1 - 2m} &\\
&& \widehat\matrix\end{array}\right]
$ (Section \ref{sectionpreliminaries})

{\item
$g_\tt$ \dotfill $\ g_{\diag(e^{-t_0},\ldots,e^{-t_{m - 1}})}$ (Section \ref{sectionpreliminaries})}
\item
$g_t$ \dotfill $\left[\begin{array}{ccc}
e^{-t} &&\\
& I_{d - 1} &\\
&& e^t
\end{array}\right]$ (Section \ref{sectionpreliminaries})
{\item $\mindist(\Lambda)$ \dotfill $\displaystyle\min_{\pp\in\Lambda\smallsetminus\{\0\}} \|\pp\|$
(Section \ref{sectionpreliminaries})
\item $\mindist_\form(\Lambda)$ \dotfill $\displaystyle\min_{\pp\in\Lambda\cap L_\form\smallsetminus\{\0\}} \|\pp\|$
(Section \ref{sectionpreliminaries})}
\item $\orth(\form)$ \dotfill  $\{g\in\SL_{d + 1}^\pm(\R): \form\circ g = \nform\}$ (Section\ref{sectionpreliminaries})
\item $\Omega_\form$ \dotfill the space of $\form$-arithmetic lattices (Section\ref{sectionpreliminaries})
\item $\Omega_d$ \dotfill the space of all lattices in $\R^{d+1}$ (Section\ref{sectionpreliminaries})
\item $\orth(\form;\Lambda)$ \dotfill the stabilizer of $\Lambda$ under the action of $\orth(\form)$ (Section\ref{sectionpreliminaries})
\item $\Omega_{\form,\Lambda}$ \dotfill the homogeneous space $\orth(\form)/\orth(\form;\Lambda)$ (Section\ref{sectionpreliminaries})
{\item $\pi_1,\pi_2$ \dotfill projections $\orth(\nform)\to M_\form$ and $\orth(\nform)\to \Omega_{\nform,\Lambda_*}$ (Section \ref{sectioncorrespondence})}
\item $\Lambda_\prim$ \dotfill the set of primitive vectors of $\Lambda$ (Section \ref{sectioncorrespondence})
{\item $\mu_\nform,\mu_{\nform,\Lambda_*}$ \dotfill Haar measures on $\orth(\nform)$ and $\Omega_{\nform,\Lambda_*}$ (Section \ref{sectioncorrespondence})
\item $\Codiam(\Gamma)$ \dotfill  the diameter of the quotient space $\Span(\Gamma)/\Gamma$ (Section \ref{sectionproofdirichlet})}
\item $N_M(\numberq)$\dotfill $\#\big\{[\rr]\in \P_\Q^d\cap M: H_\std([\rr])\leq \numberq\big\}$ (Section \ref{sectionkhinchinquadratic})
\item $S_{\Delta,z}$ \dotfill $\{x\in X: \Delta(x) \geq z\}$ (Section \ref{sectionkhinchinproof})
{\item $\Phi_\Delta(z)$ \dotfill $\mu(S_{\Delta,z})$, the tail distribution function of $\Delta$ (Section \ref{sectionkhinchinproof})}

\item $\varphi^{(C)}(x)$\dotfill $\max_{\dist_X{(x' , x)}\leq C}\varphi(x')$ (Section \ref{sectionreductiontheory})
\item $\varphi_{(C)}(x)$\dotfill $ \min_{\dist_X{(x' , x)}\leq C}\varphi(x')$ (Section \ref{sectionreductiontheory})
\item $P$ \dotfill a parabolic subgroup of $G$ (Section \ref{sectionreductiontheory})
\item $\rho_P$\dotfill the modular function of $P$ (Section \ref{sectionreductiontheory})
\item $A$ \dotfill a maximal $\Q$-split torus  (Section \ref{sectionreductiontheory})
\item $\rho$ \dotfill the sum of the positive roots of $A$, counted with multiplicity  (Section \ref{sectionreductiontheory})
\end{itemize}

\medskip
\textbf{Acknowledgements.} The first-named author was supported in part by the Simons Foundation grant \#245708. The second-named author was supported in part by the NSF grants DMS-1101320 and  DMS-1600814. The fourth-named author was supported in part by the EPSRC Programme Grant EP/J018260/1. The authors would   like to thank Victor Beresnevich, Cornelia Dru\c tu, and Sanju Velani for helpful discussions, {and an anonymous referee for useful comments}.

\subsection{General terminology and basic problems in metric Diophantine approximation}
In order to review some known facts and state our theorems, let us first introduce basic notations which we will follow throughout the paper (some of it has been introduced in a different context in \cite{FishmanSimmons5}).

\begin{definition}
\label{definitiontriple}
By a \emph{Diophantine triple} we will mean a triple $ (M,\QQ, H)$, where $M$ is a closed subset of a complete metric space $(X,\dist)$, $\QQ$ is a countable subset of $X$ whose closure contains $M$, and $H$ is a function from $\QQ$ to $(0, \infty)$. 
\end{definition}

\begin{definition}
\label{definitiontdirichletfunction}
Say that a non-increasing\Footnote{The approximating functions $\psi$ will be assumed to be non-increasing throughout the paper.} function $\psi:(0,\infty)\to(0,\infty)$ is a \emph{Dirichlet function} for $ (M,\QQ, H)$ if for every $\xx\in M$ there exists $C_{\xx} > 0$ and a sequence $(\rr_n)_1^\infty$ in $\QQ$ such that
\begin{equation}
\label{dirichlet}
\rr_n\tendsto n \xx \text{ and } \dist(\rr_n,\xx) \leq C_{\xx} \psi\big( \height(\rr_n)\big).
\end{equation}
If $C_{\xx}$ can be chosen independent of $\xx$, then we call $\psi$ \emph{uniformly Dirichlet}.
\end{definition}


When $\psi$ is a Dirichlet function, it is often important to understand whether a faster decaying function can also be Dirichlet. We formalize this thought in the next definition:

\begin{definition}
\label{definitiontoptimality} 
A Dirichlet function $\psi$ is \emph{optimal} for $ (M,\QQ, H)$ if there is no function $\varphi$ which is Dirichlet for $ (M,\QQ, H)$ and satisfies
$
\frac{\varphi(x)}{\psi(x)}\to 0\text{ as } x\to\infty
$.
\end{definition}

It turns out that the optimality of $\psi$ is under some fairly general assumptions equivalent to the existence of so-called \emph{badly approximable} points. This notion deserves a special definition:

\begin{definition}
\label{definitionbadlyapproximable}
If $ (M,\QQ, H)$ is a Diophantine triple and if $\psi:(0,\infty)\to (0,\infty)$, then a point $\xx\in M$ is said to be \emph{badly approximable} with respect to $\psi$ if there exists $\epsilon > 0$ such that for all $\rr\in\QQ$,
\[
\dist(\rr,\xx) \geq \epsilon\psi\big( \height(\rr)\big).
\]
The set of such points will be denoted $\BA({\psi,M,\QQ, H})$, and its complement will be denoted $\WA({\psi,M,\QQ, H})$ (the set of \emph{well approximable} points).
\end{definition}


If $\BA({\psi,M,\QQ, H}) \neq \emptyset$, then it is easy to see that $\psi$ is an optimal Dirichlet function for $(M,\QQ, H)$.\Footnote{Cf.\ \cite[Theorem 2.6]{FishmanSimmons5} where this is stated under the assumption that $M = X$; one can check that the latter assumption is not necessary for the argument. Furthermore, the converse is 
true assuming the $\sigma$-compactness of $M$, see \cite[Proposition 2.7]{FishmanSimmons5}. 
} Note also that $\QQ\cap M$ is always contained in $\WA({\psi,M,\QQ, H})$.

\begin{definition}
\label{definitionapproximable}
Also, we will let
\[
\begin{split}
\A(\psi,M,\QQ, H)
&:=\big\{\xx\in M : \exists\,\infty \text{ many }\rr\in\QQ\text{ with }\dist(\rr,\xx) \leq \psi\big( \height(\rr)\big)\big\}\\
&= \limsup_{\rr \in \QQ} \Big(B\left(\rr, \psi\big(H(\rr)\big)\right)\cap M\Big)
\end{split}
\]
be the set of \emph{$\psi$-approximable} points. Note that
\[
\WA(\psi,M,\QQ, H) = (\QQ\cap M)\cup\bigcap_{\epsilon > 0} \A(\epsilon\psi,M,\QQ,H).
\]
\end{definition}
We can now list a few basic general problems one can pose, given a Diophantine triple $ (M,\QQ, H)$: 

\begin{itemize}
\item[1.] Find a Dirichlet function for $ (M,\QQ, H)$. Even better -- find an optimal one; determine whether or not it is uniformly Dirichlet.
\item[2.] Find a function $\psi$ such that $\BA({\psi,M,\QQ, H}) \neq \emptyset$. Even better -- do it for a Dirichlet function, thus proving it to be optimal. In the latter case determine how big is the set $\BA({\psi,M,\QQ, H})$, e.g. in terms of its Hausdorff dimension.
\item[3.] Given a function $\psi$ and a measure on $M$, what is the measure of the set $\A({\psi,M,\QQ, H})$? This measure could be a Riemannian volume on $M$ if the latter is a manifold, or, more generally, the Hausdorff measure relative to some dimension function. A special case of the last question is a determination of the Hausdorff dimension of $\A({\psi,M,\QQ, H})$.
\end{itemize}

Note that, since $\A({\psi,M,\QQ, H})$ is a limsup set, the easy direction of the Borel--Cantelli lemma shows that for any measure $\mu$ on $M$, if the series
\eq{khinchin sum}
{\sum_{\rr\in\QQ\,\cap\, U}\mu\Big(B\left(\rr, \psi\big(H(\rr)\big)\right)\cap M\Big)
}
converges whenever $U$ is a bounded subset of $X$, then one has $\mu\big(\A({\psi,M,\QQ, H})\big) = 0$. The hope is that for ``nice" measures the (much harder) complementary divergence case can be established. Also in general it is not clear how to explicitly decide for which functions $\psi$ the sum \equ{khinchin sum} converges or diverges; for that one often needs extra information concerning the number of points of $\QQ$ satisfying a given height bound.

\subsection{Diophantine approximation in $\R^d$}\label{rd}
In the classical Diophantine approximation setup one has $X = M = \R^d$, $\QQ = \Q^d$, and 
\eq{standard height}{H(\rr) = H_\std(\rr) := q\text{ where $\rr = \pp/q$ is written in reduced form}}
\label{standardHight}(this will be referred to as the \emph{standard} height).

Dirichlet's theorem asserts that for all $\xx\in\R^d$ and $\numberq\geq 1$ there exists $\pp/q\in\Q^d$ with $q\leq\numberq$ satisfying
\eq{standard dt}{
\dist\left(\frac\pp q,\xx\right) \leq \frac{C}{q \numberq^{1/d}}\,,
}
where $C > 0$ is a constant depending on the choice of the norm on $\R^d$. A corollary is that
\begin{equation}
\label{unifdiraffine}
\text{$\psi_{1 + 1/d}$ is uniformly Dirichlet for $(\R^d, \Q^d,H_\std)$}
\end{equation}
(see Convention \ref{convention2}). Note that when the distance is given by the supremum norm on $\R^d$ one can take $C = 1$ in \equ{standard dt}, and thus $C_\xx\equiv 1$ in \eqref{dirichlet}. (It is clear that the property of $\psi$ being Dirichlet or uniformly Dirichlet does not depend on the choice of the norm.)

On the other hand, it is well-known that for all $d$, the set
\[
\BA_d := \BA({\psi_{1 + 1/d},\R^d, \Q^d,H_\std})
\]
of badly approximable vectors in $\R^d$ is nonempty (see e.g.~\cite{Perron, Schmidt2}), implying the optimality of $\psi_{1 + 1/d}$ as a Dirichlet function for $(\R^d, \Q^d,H)$. Indeed, 
Schmidt \cite{Schmidt2} showed that
\begin{equation}
\label{jarnikschmidt}
\text{$\BA_d$ has full Hausdorff dimension in $\R^d$},
\end{equation}
generalizing a result of 
Jarn\'ik \cite{Jarnik1}, who proved the case $d = 1$ of \eqref{jarnikschmidt}. 
We shall refer to \eqref{jarnikschmidt} as the \emph{Jarn\'ik--Schmidt theorem}. Note that together, Dirichlet's theorem and the Jarn\'ik--Schmidt theorem solve problems 1 and 2 above for the case of the Diophantine triple $(\R^d,\Q^d,H_\std)$.

Resolving problem 3 gives rise to theorems of Khintchine and of Jarn\'ik--Besicovitch. For convenience let us denote $\A({\psi,\R^d, \Q^d,H_\std})$ by $\A_d(\psi)$. If $\lambda$ is Lebesgue measure on $\R^d$, it was proven by 
Khintchine \cite{Khinchin2} that, if $\psi$ is non-increasing\Footnote{The monotonicity assumption is not needed if $d>1$, see \cite{Gallagher}.},
 $\A_d(\psi)$ is either null or conull depending on whether the series 
${\sum_{q = 1}^\infty q^{d - 1}\psi(q)^d}$
converges or diverges.
More generally, for $0< s < d$  one can replace $\lambda$ with $\HH^s$, the $s$-dimensional Hausdorff measure, and get the Jarn\'ik--Besicovitch theorem \cite{Jarnik2, Besicovitch}: $\HH^s\big(\A_d(\psi)\big)$ is either $0$ or $\infty$ depending on whether the series
${\sum_{q = 1}^\infty q^{d - 1} \psi(q)^s}$
converges or diverges. 
\section{Main results}\label{subsectionquadratic}

\smallskip
\begin{convention}
Throughout the paper, propositions which are proven later in the paper will be numbered according to the section they are proven in.
\end{convention}

\smallskip
We now consider the main setup of the paper, namely that of intrinsic approximation. One way to do it is to take $X = \R^d$, choose a 
submanifold $M$ of $\R^d$, and let $\QQ = \Q^d\cap M$ and $H = H_\std$ as in \equ{standard height}. However we have chosen a different approach: state and prove the main results of the paper for submanifolds of projective spaces. This way in most cases statements of results and their proofs become more natural and transparent, see Remark \ref{remarkprojective} below. 

Let $\P_\R^d$ denote the $d$-dimensional real projective space, and let $\pi:\R^{d + 1}\smallsetminus\{\0\}\to \P_\R^d$ be the quotient map $\pi(\xx) := [\xx]$, so that $[t\xx] = [\xx]$. The distance on $\P_\R^d$ will be given by the formula $\dist([\xx],[\yy]) =\min(\|\yy - \xx\|,\|\yy + \xx\|)$ ($\|\xx\| = \|\yy\| = 1$). For a subset $S$ of $\R^{d + 1}$, we let $[S] = \pi(S\smallsetminus\{\0\})$. With some abuse of notation, let us define the \emph{standard height function} $H_\std:\P_\Q^d\to\N$ by the formula 
$${\begin{aligned}H_\std([\pp]) = \|\pp\|, \text{ where $\pp$ is the unique (up to a sign)}\\ \text{ primitive integer representative of }[\pp].\qquad \end{aligned}}$$
Here and elsewhere $\|\cdot\|$ represents the max norm.
\begin{remark}
\label{remarkprojective}
To understand the difference between results for affine and projective spaces, note that if $\iota_d:\R^d\to\P_\R^d$ is given by the formula $\iota_d(\xx) = [(1,\xx)]$ and if $B\subset\R^d$ is a bounded set, then 
$\iota_d|_B$ is bi-Lipschitz and
\begin{equation}
\label{Hstdcomparison}
H_\std\big(\iota_d(\rr)\big) \asymp_{\times, B} H(\rr) \all\, \rr\in\Q^d\cap B.
\end{equation}
In particular, 
the Diophantine triples $T_{\aff} := (M,\Q^d\cap M,H_\std)$ and $T_{\mathrm{proj}} := \big(\iota_d(M),\P_\Q^d\cap\iota_d(M),H_\std\big)$ are ``locally isomorphic''. However, both the bi-Lipschitz constant and the implied constant of \eqref{Hstdcomparison} depend on the chosen bounded set $B$. Thus concepts which are robust under point-dependent multiplicative constants will not be affected by the transformation. For example, whether or not a function is Dirichlet will be the same for the triples $T_{\aff}$ and $T_{\mathrm{proj}}$, but it is conceivable that a function could be uniformly Dirichlet for the triple $T_{\mathrm{proj}}$ but not for the triple $T_{\aff}$.

Because of this difference, it is perhaps worthwhile to give a justification for why we are stating our results in projective space rather than affinely. The simplest answer to this question is that the projective statements are closest to how the results are actually proven. Moreover, in those cases where projective statements cannot be reformulated as affine statements, we feel it is important to keep the full strength of the projective theorem. To give a simple example, consider the classical Dirichlet's theorem. By examining its proof, we can deduce that
\begin{equation}
\label{unifdirprojective}
\psi_{1 + 1/d}\text{ is uniformly Dirichlet for }(\P_\R^d, \P_\Q^d,H_\std).
\end{equation}
This result is stronger than the classical \eqref{unifdiraffine}, in the sense that simply translating \eqref{unifdiraffine} to projective space along the lines indicated above does not yield \eqref{unifdirprojective}, while translating \eqref{unifdirprojective} to affine space yields \eqref{unifdiraffine} at least on the unit cube $[0,1]^d$, and applying translations recovers the full force of \eqref{unifdiraffine}.

To guide the reader, we have included Affine Corollaries after most of the main results. Each Affine Corollary can be deduced from its corresponding result together with Remark \ref{remarkprojective}. We omit those Affine Corollaries which would merely be restatements of the theorems with $\P_\R^d$ replaced by $\R^d$.
\end{remark}

\smallskip


In the following theorems we fix $d\geq 2$  
and let $\form$ be a nonsingular (see Definition \ref{definitionnonsingular}) quadratic form on $\R^{d + 1}$ with integer coefficients. (Cf.\ Remark \ref{remarkassumptionsquadratic} for a discussion of the singular case.)
Denote by 
\begin{equation}
\label{lightcone}
L_\form := \{\xx\in\R^{d + 1}: \form(\xx) = 0\}
\end{equation}
the \emph{light cone} of $\form$ and let $M_\form = [L_\form]$.  Manifolds $M_\form$ of this form are called \emph{nonsingular rational quadric hypersurfaces}.

We will denote by $\Rrank$ the \emph{$\R$-rank} of $\form$, defined as the dimension of any maximal totally isotropic (with respect to $\form$) subspace of $\R^{d + 1}$. Similarly, $\Qrank$ will stand for the \emph{$\Q$-rank} of $\form$, i.e.\ the dimension of any maximal totally isotropic rational subspace of $\R^{d + 1}$. Clearly $\Rrank \geq \Qrank$; see Section \ref{isotropic} for more details. To avoid trivialities, in our theorems we will make the standing assumption that $\Qrank\ge 1$, or, equivalently, that
\begin{equation}
\label{haverationalpoints}
\P_\Q^d\cap M_\form \neq \emptyset.
\end{equation}
Note that Meyer's theorem states that \eqref{haverationalpoints} is satisfied as soon as $d\geq 4$ and $M_\form\neq\emptyset$. Moreover, if $d = 2$ or $3$, the Hasse--Minkowski theorem (e.g.\  \cite[Theorem 1 on p.61]{BorevichShafarevich}) allows one to determine computationally whether \eqref{haverationalpoints} is satisfied for any given quadratic form $\form$; cf.\ \cite[Chapter 1, \67]{BorevichShafarevich}, in particular the remarks on the top of page 62.

For the affine corollaries to our theorems, we consider a quadratic polynomial $\form_\aff:\R^d\to\R$ with integer coefficients, and we let $\form:\R^{d + 1}\to\R$ be the projectivization of $\form_\aff$, that is, the unique homogeneous quadratic polynomial (i.e.\ quadratic form) $\form$ on $\R^{d + 1}$ such that $\form(1,\xx) = \form_\aff(\xx)$ for all $\xx\in\R^d$. Then $M_{\form_\aff}$, the zero set of $\form_\aff$, is equal to $\iota_d^{-1}(M_\form)$. We call $M_{\form_\aff}$ a nonsingular rational quadric hypersurface whenever $M_\form$ is. Note that it may be the case that $M_\form$ is singular due to ``singularities at infinity'' rather than singularities at finite points; in this case, we still consider the hypersurface $M_{\form_\aff}$ to be singular despite its having no ``singular points''.

The problem of intrinsic approximation on $M_\form$ was implicitly considered by 
Dru\c tu in \cite{Drutu} where the Hausdorff dimension of sets $\A_{M_\form}(\psi)$ was computed. (Dru\c tu actually studied ambient approximation on $M_\form$, and, generalizing an earlier result of 
Dickinson and 
Dodson \cite[Lemma 1]{DickinsonDodson}, showed that it reduces to intrinsic approximation if $\psi$ is assumed to decay fast enough.) The case $\form(\xx) = x_1^2 + \dots +x_d^2 - x_0^2$ was recently considered in \cite{KleinbockMerrill}.\Footnote{\cite{KleinbockMerrill} is written in the affine setup; specifically, the manifold $\bS^{d-1}\subset\R^d$ is discussed. Since this set is compact, Remark \ref{remarkprojective} gives an exact correspondence for Diophantine results in $\bS^{d-1}$ and those in $\iota_d(\bS^{d-1}) = M_\form$.} One of the theorems from the latter paper asserts\Footnote{Moshchevitin \cite{Moshchevitin2} has recently provided an elementary proof of this assertion for the case $M_{\form_\aff} =\bS^{2}$. His proof gives an explicit value for the constant $C$ appearing in \eqref{strongdirichletintro}.} that there exists $C > 0$ (possibly depending on $d$) such that for all $[\xx]\in M_\form$ and for all $\numberq\geq\numberq_0$ there exists $[\rr]\in \P_\Q^d\cap M_\form$ with
 \begin{equation}
 \label{strongdirichletintro}
H_\std([\rr]) \leq \numberq\text{ and }
\dist([\rr],[\xx]) \leq \frac{C}{\sqrt{H_\std([\rr]) \numberq}}\,\cdot
\end{equation}
In particular, it follows that $\psi_1$ is uniformly Dirichlet for intrinsic approximation on $M_\form$. It was also shown in \cite{KleinbockMerrill} that:

\begin{itemize}
\item[(i)] $\psi_1$ is optimal -- moreover, $\BA_{M_\form}(\psi_1)$ has full Hausdorff dimension;
\item[(ii)] for any 
$\psi: \N \to (0,\infty)$ such that 
$${\text{the function }q\mapsto q\psi(q)\text{ is nonincreasing,}}$$
the Lebesgue measure of $\A_{M_\form}(\psi)$ is full (resp.\ zero) iff the sum $\sum_{q = 1}^\infty q^{d - 2} \psi(q)^{d-1}$ diverges (resp.\ converges).
\end{itemize}

The last statement was also shown to imply, via the Mass Transference Principle of Beresnevich and Velani \cite[Theorem 2]{BeresnevichVelani}, a similar statement for Hausdorff measures.

%

In the present paper we generalize all the aforementioned results to the case of arbitrary quadric hypersurfaces. 

\medskip

\begin{reptheorem}{theoremdirichletquadratic}[Dirichlet-type theorem for quadric hypersurfaces]
Let $M_\form\subset\P_\R^d$ be a nonsingular rational quadric hypersurface 
with $\Qrank\ge 1$. Then
\begin{itemize}
\item[(i)] $\psi_1$ is Dirichlet for intrinsic approximation on $M_\form$.
\item[(ii)] $\psi_1$ is uniformly Dirichlet if and only if $\Qrank = \Rrank$.
\item[(iii)] The following are equivalent:
\begin{itemize}
\item[(A)] $\Qrank = \Rrank = 1$.
\item[(B)] (``Strong Dirichlet'') There exist $C,\numberq_0 > 0$ such that for all $[\xx]\in M_\form$ and for all $\numberq\geq\numberq_0$ there exists $[\rr]\in \P_\Q^d\cap M_\form$ such that \eqref{strongdirichletintro} holds.
\item[(C)] The set
\[
\{[\xx]\in M_\form : \exists \,C,\numberq_0 > 0 \all \,\numberq\geq\numberq_0 \;\; \exists\, [\rr]\in \P_\Q^d\cap M_\form \text{ satisfying \eqref{strongdirichletintro}}\}
\]
has positive $\lambda_{M_\form}$-measure.
\end{itemize}
\end{itemize}
\end{reptheorem}

\begin{affinecorollary*}
Let $M_{\form_\aff}\subset\R^d$ be a nonsingular rational quadric hypersurface 
with $\Qrank\ge 1$. Then
\begin{itemize}
\item[(i)] $\psi_1$ is Dirichlet for intrinsic approximation on $M_{\form_\aff}$.
\item[(ii)] If $\Qrank = \Rrank$, then $\psi_1$ is uniformly Dirichlet on compact subsets of $M_{\form_\aff}$.
\item[(iii)] The following are equivalent:
\begin{itemize}
\item[(A)] $\Qrank = \Rrank = 1$.
\item[(B)] (``Strong Dirichlet'') For every compact set $K\subset M_{\form_\aff}$, there exist $C,\numberq_0 > 0$ such that for all $\xx\in K$ and for all $\numberq\geq\numberq_0$ there exists $\rr\in \Q^d\cap M_{\form_\aff}$ such that
\begin{equation}
\label{strongdirichletaff}
H_\std(\rr) \leq \numberq\text{ and }
\dist(\rr,\xx) \leq \frac{C}{\sqrt{H_\std(\rr) \numberq}}\,
\end{equation}
\item[(C)] The set
\[
\{\xx\in M_{\form_\aff} : \exists \,C,\numberq_0 > 0 \all \,\numberq\geq\numberq_0 \;\; \exists\, \rr\in \Q^d\cap M_{\form_\aff} \text{ satisfying \eqref{strongdirichletaff}}\}
\]
has positive $\lambda_{M_{\form_\aff}}$-measure.
\end{itemize}
\end{itemize}

\end{affinecorollary*}




As for the optimality of Theorem \ref{theoremdirichletquadratic}, as stated above it suffices to show that the set 
\[
\BA_{M_\form} := \BA_{M_\form}(\psi_1)
\]
of \emph{intrinsically badly approximable} points of $M_\form$ is nonempty.
 It follows from the Correspondence Principle below (Lemma \ref{lemmacorrespondence2}) that 
 points in $\BA_{M_\form}$ correspond to bounded orbits of some dynamical system (cf.\ Corollary \ref{corollaryBAboundedorbits}). Then the results of \cite{KleinbockMargulis3} imply: 

\begin{reptheorem}{theoremBAquadratic}[Jarn\'ik--Schmidt for quadric hypersurfaces]
Let $M_\form\subset\P_\R^d$ be a nonsingular rational quadric hypersurface. Then $\HD(\BA_{M_\form}) = \HD(M_\form)$. In particular, the Dirichlet function $\psi_1$ is optimal.
\end{reptheorem}

(No changes needed for the Affine Corollary.)


Using the methods of \cite{KleinbockWeiss3} one can strengthen the conclusion of this theorem to say that $\BA_{M_\form}$ is winning (in the sense of Schmidt). This conclusion also follows from a much more general theorem in \cite{FKMS1} which applies to \emph{all} nondegenerate manifolds and asserts that the set of intrinsically badly approximable points is hyperplane absolute winning (see \cite{BFKRW} for the definition).





\smallskip
Before stating the analogue of Khintchine's theorem for intrinsic approximation on quadric hypersurfaces, let us introduce the following definitions, which will be used in Sections \ref{sectionkhinchinquadratic}--\ref{sectionspecialtype}:

\begin{definition}
\label{definitionregular}
Call a function $\psi$ \emph{regular} if for every (equivalently, for some) $C_1 > 1$  there exists $C_2 > 1$ such that for all $q_1,q_2$, if $1/C_1 \leq q_2/q_1 \leq C_1$, then $1/C_2 \leq \psi(q_2)/\psi(q_1) \leq C_2$. This may be stated succinctly as follows: $q_1 \asymp_\times q_2$ implies $\psi(q_1)\asymp_\times \psi(q_2)$. 
\end{definition}

\begin{definition}
\label{definitiontypes}
The \emph{exceptional quadric hypersurface} is the hypersurface $M_{\form_0}\subset\P_\R^3$ defined by the \emph{exceptional quadratic form}
\begin{equation}
\label{form0}
\form_0(x_0,x_1,x_2,x_3) = x_0 x_3 - x_1 x_2.
\end{equation}
If a quadratic form $\form:\R^4\to\R$ is conjugate over $\Q$ to $\form_0$, we will write $\form\sim\form_0$. We remark that $\form\sim\form_0$ holds if and only if $\form$ is a rational quadratic form in $4$ variables for which $
\Qrank = \Rrank = 2$ (see Lemma \ref{exceptional} for more detail).
\end{definition}

The 
hypersurface $M_{\form_0}$, which we study in detail in Section \ref{sectionspecialtype}, has very interesting properties for intrinsic Diophantine approximation. Note that if $\form\sim\form_0$, then the intrinsic Diophantine theory on $M_\form$ will be more or less the same as the intrinsic Diophantine theory on $M_{\form_0}$. Specifically, the rational equivalence between $\form$ and $\form_0$ defines a diffeomorphism between $M_\form$ and $M_{\form_0}$ which sends rational points to rational points and preserves heights up to a multiplicative constant.

\begin{reptheorem}{theoremkhinchinquadratic}[Khintchine-type theorem for quadric hypersurfaces]
Let $M_\form\subset\P_\R^d$ be a nonsingular rational quadric hypersurface 
with $\Qrank\ge 1$.
Fix $\psi:\N\to(0,\infty)$, and suppose that $\psi$ is regular
and that the function $q\mapsto q\psi(q)$ is nonincreasing.
Then $\A_{M_\form}(\psi)$ has full Lebesgue measure 
if 
the series\Footnote{Here and hereafter $2^\N$ stands for $\{2^n : n\in N\}$.}
\begin{repequation}{loglog}
\begin{cases}
\sum_{\numberq\in 2^\N} \numberq^{d-1} \psi^{d-1}(\numberq) & \form\not\sim\form_0 \\
\sum_{\numberq\in 2^\N} \numberq^2\log\log \numberq\, \psi^2(\numberq) & \form\sim\form_0
\end{cases}
\end{repequation}
diverges; otherwise, $\A_{M_\form}(\psi)$ is Lebesgue null.
\end{reptheorem}

(No changes needed for the Affine Corollary.) 

The appearance of two cases in Theorem \ref{theoremkhinchinquadratic} is due to nontrivial relations among the collection of sets defining $A_{M_{\form_0}}$ that are not present when $\form \not\sim \form_0$. A discussion of these relations, and their implications, is given in Section \ref{sectionspecialtype} (see particularly Remark \ref{remarknontrivialrelation}). 



\smallskip
Using the Mass Transference Principle of Beresnevich and Velani \cite[Theorem 2]{BeresnevichVelani}, one can 
deduce the divergence case of  the Jarn\'ik--Besicovitch theorem for quadric hypersurfaces (Theorem \ref{theoremjarnikquadratic}). Combined with the convergence case (Corollary \ref{corollarykhinchinquadratic}), this gives a complete analogue of the Jarn\'ik--Besicovitch theorem when $ \form\not\sim\form_0$, and a slight discrepancy between the convergence and divergence conditions in the exceptional case. This discrepancy, however,  does not affect  the computation of the Hausdorff dimension of the set of intrinsically $\psi_c$-approximable points  for all $\cee > 1$, 
$\A_{M_\form}(\psi_c)$; namely, Theorem \ref{theoremjarnikquadratic} immediately implies
\begin{equation}
\label{jarnikquadratic}
\HD\big(\A_{M_\form} (\psi_\cee)\big) = \frac{d-1}{\cee}\cdot
\end{equation}
See Section  \ref{sectionkhinchinquadratic}  for a detailed discussion.

\comdmitry{A question: can we upgrade it to
$\HD\big(\A_{M_\form} (\psi)\big) = \inf\left\{s : \sum_{\numberq\in 2^\N} \numberq^k \psi^s(\numberq) < \infty\right\}$?}

\begin{remark}
Let $\H^d$ denote the $d$-dimensional hyperbolic space. Given a quadric hypersurface $M_\form\subset\P_\R^d$ satisfying $\Qrank = \Rrank = 1$, there exists a lattice $\Gamma\subgp\Isom(\H^d)$ and a diffeomorphism $\Phi:\del\H^d\to M_\form$ such that if $P_\Gamma\subset\del\H^d$ is the set of parabolic fixed points of $\Gamma$, then $\Phi(P_\Gamma) = \P_\Q^d\cap M_\form$. This correspondence allows one to deduce the case $\Qrank = \Rrank = 1$ of all the results of this subsection as consequences of known theorems about Diophantine approximation of lattices in $\Isom(\H^d)$; see \6\ref{subsectionkleinian} for more detail.
\end{remark}

\begin{remark}
\label{remarkassumptionsquadratic}
In the above theorems, the form $\form$ is always assumed to be nonsingular with integer coefficients. The latter assumption may be made without loss of generality, since if $\form$ is a quadratic form which is not a scalar multiple of any quadratic form with integer coefficients, then $\P_\Q^d\cap M_\form$ is not dense in $M_\form$; cf.\ Remark \ref{remarknondense}. On the other hand, the nonsingularity assumption does involve a loss of generality. In Theorem \ref{theoremdirichletquadratic}, the singular case can be deduced from the nonsingular case; cf.\ Remark \ref{remarkquadraticsingular}. However, this is not the case for Theorem \ref{theoremkhinchinquadratic}. The use of the nonsingularity assumption appears unavoidable in Theorem \ref{theoremkhinchinquadratic} since if $\form$ is singular, then the associated algebraic group $\orth(\form)$ is not semisimple.
\end{remark}

\noindent{\bf The structure of the paper.}
In Section \ref{sectionpreliminaries} we recall the necessary preliminaries from the theory of quadratic forms. In Section \ref{sectioncorrespondence} we state and prove the Correspondence Principle, which relates intrinsic Diophantine approximation on a nonsingular rational quadric hypersurface $M_\form$ with dynamics on a certain space of arithmetic lattices. This correspondence is similar to the one developed for ambient approximation by Davenport--Schmidt and Dani, see \cite{Dani4, DavenportSchmidt1,DavenportSchmidt2, KleinbockMargulis2, KleinbockMargulis} and generalizes the one used in \cite{KleinbockMerrill}. In particular, we prove (Corollary \ref{corollaryBAboundedorbits}) that $[\xx]\in \BA_{M_\form}$ if and only if a certain trajectory on the corresponding homogeneous space is bounded. 

In Section \ref{sectionproofdirichlet} we prove Theorem \ref{theoremdirichletquadratic} (Dirichlet for quadric hypersurfaces). In Section \ref{sectionkhinchinquadratic} we use \cite[Theorem 1.7]{KleinbockMargulis} to reduce Theorem \ref{theoremkhinchinquadratic} (Khintchine for quadric hypersurfaces) to a statement about Haar measure on the space of $\form$-arithmetic lattices (Proposition \ref{propositionkDL}). In Section \ref{sectionreductiontheory} we use the generalized Iwasawa decomposition \cite[Proposition 8.44]{Knapp} and the reduction theory for algebraic groups \cite[Proposition 2.2]{Leuzinger} to prove Proposition \ref{propositionkDL}, thus completing the proof of Theorem \ref{theoremkhinchinquadratic}. Finally, in Section \ref{sectionspecialtype} we analyze in detail the exceptional quadric hypersurface $M_{\form_0}$
and 
explain intuitively why the converse to (the naive application of) Borel--Cantelli does not hold for intrinsic approximation on this hypersurface.

\section{Preliminaries on quadratic forms and lattices}
\label{sectionpreliminaries}
\subsection{Orthogonality and nonsingularity}
Let $V$ be a vector space over $\R$ and let $\form:V\to\R$ be a quadratic form. We denote by $B_\form$ the unique symmetric bilinear form on $V$ satisfying
\[
\form(\xx) = B_\form(\xx,\xx) \all\, \xx\in V.
\]
We remark that $B_\form$ may be written explicitly in terms of $\form$ via the formula $$B_\form(\xx,\yy) = \frac{\form(\xx + \yy) - \form(\xx) - \form(\yy)}2.$$
\begin{definition}\label{def_orthogonal}
Two elements $\xx,\yy\in V$ are \emph{$\form$-orthogonal} if $B_\form(\xx,\yy) = 0$.
The set of all vectors which are $\form$-orthogonal to a given vector $\xx$ will be denoted $\xx^\perp$, and for any $S\subset V$ we let $S^\perp := \bigcap_{\xx\in S}\xx^\perp$.
\end{definition}

\begin{definition}
\label{definitionnonsingular}
The quadratic form $\form$ is called \emph{nonsingular} if for every $\xx\in V\smallsetminus\{\0\}$, we have $\xx^\perp\propersubset V$, or equivalently, if the map $\xx\mapsto B_\form(\xx,\cdot)$ is an isomorphism between $V$ and $V^*$.
\end{definition}

Note that a form $\form$ is nonsingular if and only if its corresponding hypersurface $M_\form$ is nonsingular as a manifold.
Indeed, recall that $M_\form = [L_\form]$, where $L_\form$ is the light cone of $Q$ defined in \eqref{lightcone}. Then $M_\form$ is nonsingular if and only if $L_\form\smallsetminus\{\0\}$ is nonsingular, which in turn happens if and only if $\nabla \form(\xx)\neq 0$ for all $\xx\in L_\form\smallsetminus\{\0\}$. Since $\nabla \form(\xx) = 2B_\form(\xx,\cdot)$, we have $\nabla \form(\xx) = 0$ if and only if $\xx^\perp = \R^{d + 1}$. Thus $M_\form$ is nonsingular if and only if $\xx^\perp\propersubset \R^{d + 1}$ for all $\xx\in L_\form$. Since $\xx^\perp = \R^{d + 1}$ implies $\xx\in L_\form$, this proves the assertion.

\subsection{Totally isotropic subspaces; rank and renormalization}\label{isotropic}
Throughout this subsection, fix $\K\in\{\R,\Q\}$ and $d\geq 1$, and let $\form:\R^{d + 1}\to\R$ be a nonsingular quadratic form whose coefficients lie in $\K$. We will say that a subspace $E\subgp\R^{d + 1}$ is   a \emph{$\K$-subspace} if $E$ has a basis consisting of elements of $\K^{d + 1}$, or equivalently, if $E$ is defined by equations whose coefficients lie in $\K$. (In the literature, it is sometimes said that $E$ is \emph{defined over $\K$}.)

\begin{definition}
\label{definitionrank}
A subspace $E\subgp\R^{d + 1}$ is  \emph{totally isotropic} if 
$\form|_{E} = 0$.
It is known (see e.g. \cite[Corollary 8.12]{EKM2}) that any two maximal totally isotropic $\K$-subspaces of $\R^{d + 1}$ have the same dimension. This common dimension is called the \emph{$\K$-rank} of $\form$ and is denoted by $\Krank$.
\end{definition}



It turns out to be convenient to conjugate totally isotropic subspaces to canonical subspaces, namely to subspaces of the form
\begin{equation}
\label{Lmdef}
\LL_m := \sum_{i = 0}^{m - 1}\R\ee_i.
\end{equation}
By choosing the right conjugation map $\matrix$, we may also guarantee that the conjugated quadratic form $\nform = \form\circ\matrix$ has a particularly nice form. We make this rigorous as follows:

\begin{definition}
For $m\leq \frac{d + 1}2$, a quadratic form $\nform$ is \emph{$m$-normalized} if there exists a quadratic form $\w\nform$ on $\R^{d + 1 - 2m}$ such that
\begin{equation}
\label{mnormalized}
\nform(\xx) = x_0 x_d + x_1 x_{d - 1} + \ldots + x_{m - 1} x_{d - m + 1} + \w\nform(x_m,\ldots,x_{d - m}).
\end{equation}
The quadratic form $\w\nform$ will be called the \emph{remainder} of $\nform$.
\end{definition}

\begin{proposition}
\label{propositionrenormalization}
Let $E\subgp\R^{d + 1}$ be a totally isotropic $\K$-subspace of dimension $m$. Then $m\leq\frac{d + 1}2$, and there exists $\matrix\in\GL_{d + 1}(\K)$ such that
\begin{itemize}
\item[(i)] $\matrix^{-1}(E) = \LL_m$, and
\item[(ii)] $\nform := \form\circ \matrix$ is $m$-normalized.
\end{itemize}
\end{proposition}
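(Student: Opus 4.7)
The plan is to construct $\matrix$ by sending the standard basis of $\R^{d+1}$ to a \emph{hyperbolic basis} adapted to $E$: pairs $(\vv_0,\ww_0),\dots,(\vv_{m-1},\ww_{m-1})$ spanning $m$ mutually orthogonal hyperbolic planes, together with a $\K$-basis of the orthogonal complement of all those planes. The column indexing $\matrix(\ee_i)=\vv_i$ for $i<m$ and $\matrix(\ee_{d-i})=\ww_i$ for $i<m$ is chosen precisely so that $\matrix^{-1}(E)=\LL_m$ and the cross terms in $\form\circ\matrix$ take the shape $x_i x_{d-i}$ prescribed by \eqref{mnormalized}.

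The bound $m\leq(d+1)/2$ comes first and for free: $E$ totally isotropic gives $E\subset E^\perp$, and nonsingularity of $\form$ gives $\dim E^\perp=d+1-m$, so $m\leq d+1-m$. Next, fix any $\K$-basis $\vv_0,\ldots,\vv_{m-1}$ of $E$. Because $\form$ is nonsingular, the linear functionals $\xx\mapsto B_\form(\vv_j,\xx)$ are linearly independent over $\K$ (if a $\K$-linear combination vanished, nonsingularity would force the corresponding combination of the $\vv_j$ to be zero). A routine dual-basis argument therefore produces $\ww_0,\ldots,\ww_{m-1}\in\K^{d+1}$ with $B_\form(\vv_i,\ww_j)=\tfrac12\delta_{ij}$.

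The one genuinely nontrivial step is to promote these $\ww_j$ to a mutually $\form$-orthogonal system. Set $\ww_j':=\ww_j-\sum_k B_\form(\ww_j,\ww_k)\vv_k$; since $\vv_k\in E\subset E^\perp$, the identities $B_\form(\vv_i,\ww_j')=B_\form(\vv_i,\ww_j)=\tfrac12\delta_{ij}$ are preserved, while a direct bilinear expansion (using $B_\form(\vv_k,\vv_l)=0$ and symmetry of $B_\form$) gives $B_\form(\ww_i',\ww_j')=B_\form(\ww_i,\ww_j)-B_\form(\ww_j,\ww_i)-B_\form(\ww_i,\ww_j)+0=-B_\form(\ww_i,\ww_j)$, so replacing $\ww_j$ by $\tfrac12$ of the adjustment, or equivalently iterating this normalization once more (symmetrizing the correction matrix) kills all the off-diagonal pairings. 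This is the main obstacle because one must verify simultaneously that the rationality over $\K$ is retained and that the pairings $B_\form(\vv_i,\ww_j)=\tfrac12\delta_{ij}$ are not disturbed; both follow from $E$ being totally isotropic.

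With a hyperbolic basis in hand, let $W:=\Span(\vv_0,\ldots,\vv_{m-1},\ww_0,\ldots,\ww_{m-1})^\perp$. This is a $\K$-subspace of dimension $d+1-2m$, and $\form\given W$ is nonsingular because the restriction to the $2m$-dimensional complement is nondegenerate (its Gram matrix is block antidiagonal with $\tfrac12$'s). Pick any $\K$-basis $\uu_m,\ldots,\uu_{d-m}$ of $W$ and define $\matrix\in\GL_{d+1}(\K)$ by
\[
\matrix(\ee_i)=\vv_i\ (i<m),\quad \matrix(\ee_k)=\uu_k\ (m\leq k\leq d-m),\quad \matrix(\ee_{d-i})=\ww_i\ (i<m).
\]
Then $\matrix^{-1}(E)=\LL_m$ by construction, and expanding $\form\circ\matrix$ bilinearly, every pairing of a $\vv$- or $\ww$-term against a $\uu$-term vanishes by definition of $W$, each $\vv_i\vv_j$ and $\ww_i\ww_j$ coefficient vanishes by the hyperbolic basis relations, and the surviving $\vv_i\ww_j$ contributions assemble into $\sum_{i<m} x_i x_{d-i}$; what remains is $\form\given W$ in the coordinates $x_m,\ldots,x_{d-m}$, which plays the role of $\w\nform$ in \eqref{mnormalized}.
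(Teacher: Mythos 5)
Your construction is, at bottom, the same Witt-type decomposition the paper uses: pick a $\K$-basis of $E$, produce a dual system via nonsingularity of $\form$, correct it by elements of $E$ so that it spans a complementary totally isotropic $\K$-subspace $E_2$, and take $(E\oplus E_2)^\perp$ for the middle block. (The paper does the correction inductively and triangularly, with an extra term $-\tfrac12\form(\ff_{d-i}')\ff_i$ to kill the self-pairing, and extracts $m\le(d+1)/2$ from $\dim E_3=d+1-2m$; your upfront derivation from $E\subset E^\perp$ and $\dim E^\perp=d+1-m$ is cleaner.) The overall architecture is fine.

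However, the verification of what you yourself call the main step is inconsistent with your normalization, and the patch you propose would break the proof. With $B_\form(\vv_i,\ww_j)=\tfrac12\delta_{ij}$ and $\ww_j'=\ww_j-\sum_k B_\form(\ww_j,\ww_k)\vv_k$, the correct expansion is
\begin{equation*}
B_\form(\ww_i',\ww_j')=B_\form(\ww_i,\ww_j)-\tfrac12 B_\form(\ww_j,\ww_i)-\tfrac12 B_\form(\ww_i,\ww_j)+0=0,
\end{equation*}
because each cross term picks up the factor $\tfrac12$ from the pairing $B_\form(\vv_k,\ww_j)=\tfrac12\delta_{kj}$. Your displayed computation drops these factors (it is the computation one would do if the pairing were $\delta_{ij}$), arrives at $-B_\form(\ww_i,\ww_j)$, and then proposes to fix this by halving the adjustment or by iterating. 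Neither remedy works: with the correction scaled by $t$, one gets $B_\form(\ww_i',\ww_j')=(1-2tc)\,B_\form(\ww_i,\ww_j)$ where $c=\tfrac12$, so $t=\tfrac12$ leaves $\tfrac12 B_\form(\ww_i,\ww_j)\ne 0$, and iterating only rescales the off-diagonal pairings by the same factor at each step, never annihilating them in finitely many steps. The saving grace is that your original, unscaled formula is already exactly right ($t=1$, $c=\tfrac12$ gives $1-2tc=0$), so the proof goes through once the computation is done with the $\tfrac12$'s in place and the spurious "fix" is deleted. The remaining steps (that $W$ is a $\K$-subspace of dimension $d+1-2m$, that $\matrix\in\GL_{d+1}(\K)$, and the bilinear expansion of $\form\circ\matrix$) are correct.
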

\begin{proof}
Since $\form$ is nonsingular, we may identify $E^*$ with $\R^{d + 1}/E^\perp$ via the map
\begin{equation}
\label{implicitidentification}
\xx + E^\perp \mapsto 
B_\form(\xx,\cdot) |_{E}.
\end{equation}
Let $(\ff_i)_{i = 0}^{m - 1}$ be a $\K$-basis for $E$, and let $(\ff_{d - i}' + E^\perp)_{i = 0}^{m - 1}$ be its dual basis. Inductively define $\ff_{d - i}\in\ff_{d - i}' + E^\perp$ by letting
\[
\ff_{d - i} = \ff_{d - i}' - \sum_{j = 0}^{i - 1} B_\form(\ff_{d - i}',\ff_{d - j})\ff_j - \frac{1}{2}\form(\ff_{d - i}')\ff_i.
\]
Direct calculation shows that $B_\form(\ff_{d - i},\ff_{d - j}) = 0$ for $j\leq i$. Thus $E_2 := \sum_{i = 0}^{m - 1} \R\ff_{d - i}$ is also a totally isotropic $\K$-subspace of $\R^{d + 1}$. Note that by construction, $E_2$ is isomorphic to $E^*$ via the map \eqref{implicitidentification}. Since $E$ is totally isotropic, $E\subgp E^\perp$ and thus $E\cap E_2 = \{\0\}$.

Let $E_3 = E^\perp\cap E_2^\perp = (E + E_2)^\perp$. Since 
$\form|_{E + E_2}$ 
is nonsingular, we have $(E + E_2)\cap E_3 = \{\0\}$ and thus $\R^{d + 1} = E\oplus E_2\oplus E_3$. It follows that $\HD(E_3) = d + 1 - \HD(E) - \HD(E_2) = d + 1 - 2m$, and in particular $m\leq(d + 1)/2$. Let $(\ff_i)_{i = m}^{d - m}$ be a $\K$-basis for $E_3$, and let $\matrix$ be the $(d + 1)\times(d + 1)$ matrix whose columns are given by $\ff_0,\ldots,\ff_d$, so that $\matrix(\ee_i) = \ff_i$ for $i = 0,\ldots,d$. Then $\matrix\in\GL_{d + 1}(\K)$ by the above-mentioned decomposition $\R^{d + 1} = E\oplus E_2\oplus E_3$. (i) and (ii) follow immediately.
\end{proof}

Note that it follows from the above proposition that $\Rrank$ is always less than or equal to $\frac{d+1}2$. Also, if $\form$ has coefficients in $\Q$ then $\Qrank \geq \frac{d-3}2$ unless $\Qrank = \Rrank$. Indeed, without loss of generality suppose that $\form$ is $\Qrank$-normalized, and let $\w\form$ be the remainder of $\form$. If $\Qrank \ne \Rrank$, then $\w\form$ represents zero over $\R$. Since $\w\form$ is a quadratic form in $d + 1 - 2\Qrank$ variables, if $d + 1 - 2\Qrank \geq 5$, by Meyer's theorem $\w\form$ represents zero over $\Q$. This would contradict the definition of $\Qrank$. So $d + 1 - 2\Qrank \leq 4$; rearranging gives $\Qrank \geq \frac{d-3}2$.




\smallskip
Another consequence of Proposition \ref{propositionrenormalization} is a nice characterization of quadratic forms 
rationally equivalent to the exceptional quadratic form $\form_0$ defined in  \eqref{form0}. Recall that the \emph{determinant} $\det(Q)$ of a quadratic form $\form:\R^{d+1}\to\R$ is the determinant of the linear map $\phi_\form:\R^{d+1}\to (\R^{d+1})^*\equiv\R^{d+1}$ defined by $\xx\mapsto B_\form(\xx,\cdot)$.

\begin{lemma}
\label{exceptional}
The following are equivalent for a  
rational quadratic form $\form$ in $4$ variables with $\Qrank\ge 1$:
\begin{itemize}
\item[(i)] $\form\sim\form_0$;
\item[(ii)] $\Qrank = \Rrank = 2$;
\item[(iii)] $\det(Q)$ is a square  of a rational number.
\end{itemize}
\end{lemma}
\begin{proof} Note that for any $\phi\in \GL_4(\R)$ it holds  that $\det(\form\circ\phi) = \det(\form)\det(\phi)^2$. In particular, if $\form_1$ and $\form_2$ are equivalent over $\Q$, then $\det(\form_1)$ is a square if and only if $\det(\form_2)$ is. Thus the implication   (i) $\Rightarrow$ (iii)  follows immediately upon calculating that $\det(\form_0) = 1/16$. 

For the implication (iii) $\Rightarrow$ (ii), suppose that $\det(\form)$ is a square. By Proposition \ref{propositionrenormalization}, we may without loss of generality assume that $\form$ is $1$-normalized. In this case, we have $\det(\form) = -(1/4)\det(\w\form)$ where $\w\form$ is the remainder of $\form$. By the well-known canonical form of quadratic forms, we may without loss of generality assume that $\w\form(\xx) = a_1 x_1^2 + a_2 x_2^2$ for some $a_1,a_2\in\Q$. Then $-\det(\w\form) = -a_1 a_2$ is a square. Thus $\bb := (0,a_2,\sqrt{-a_1 a_2},0)\in\Q^4$, and $\R\ee_0 + \R\bb$ is a totally isotropic subspace of dimension $2$, proving that $\Qrank = 2$.

Finally, the implication (ii) $\Rightarrow$ (i) is a straightforward consequence of Proposition \ref{propositionrenormalization}.
\end{proof}

A convenient fact about $m$-normalized quadratic forms is that any element of $\GL_m(\R)$ extends to an element of $\SL_{d + 1}(\R)$ which preserves every $m$-normalized quadratic form. Specifically, given a quadratic form $\nform:\R^{d + 1}\to\R$, let
\[
\orth(\nform) = \left\{g\in\SL_{d + 1}^\pm(\R): \nform\circ g = \nform\right\}.
\]
Then a direct computation yields the following:
\begin{observation}
Fix $m\leq (d + 1)/2$ and $\matrix\in\GL_m(\R)$. {Define the \emph{reverse} of the matrix $\matrix$ to be the matrix whose $(i,j)$th entry is equal to the $(m - j,m - i)$th entry of $\matrix^{-1}$, and denote this matrix by $\widehat\matrix$. Visually, $\widehat\matrix$ is $\matrix^{-1}$ flipped along the northeast-southwest diagonal.} Let
\begin{equation}
\label{gmatrix}
g_\matrix = \left[\begin{array}{ccc}
\matrix &&\\
& I_{d + 1 - 2m} &\\
&& \widehat\matrix
\end{array}\right].
\end{equation}
Then $g_\matrix\in\orth(\nform)$ for every $m$-normalized quadratic form $\nform$.
\end{observation}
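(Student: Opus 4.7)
The plan is a direct matrix computation, using the block-diagonal form of $g_\matrix$ together with a compact matrix description of the ``pairing'' part of an $m$-normalized form.

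First, I would split coordinates $\xx=(\xx_1,\xx_2,\xx_3)$, where $\xx_1=(x_0,\ldots,x_{m-1})$, $\xx_2=(x_m,\ldots,x_{d-m})$, and $\xx_3=(x_{d-m+1},\ldots,x_d)$, and let $J\in\GL_m(\R)$ denote the $m\times m$ antidiagonal matrix with $1$'s on the antidiagonal. A straightforward index check (for each $k=0,\ldots,m-1$, the product $x_k x_{d-k}$ appears exactly once in $\xx_1^T J\xx_3$) shows that the sum of cross-terms in \eqref{mnormalized} equals $\xx_1^T J\xx_3$, so
\[
\nform(\xx)=\xx_1^T J\xx_3+\w\nform(\xx_2).
\]

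Next, I would identify the ``reverse'' operation with an honest matrix conjugation, namely $\matrix^R=J\matrix^T J$. This is verified by computing entries: in $1$-indexing, $(J\matrix^T J)_{ij}=\matrix_{m+1-j,\,m+1-i}$, which matches the observation's description of $\matrix^R$ as the flip of $\matrix$ across the NE-SW diagonal. Because $J^2=I$, inversion then gives $(\matrix^R)^{-1}=J(\matrix^{-1})^T J$, and hence the key identity
\[
\matrix^T J(\matrix^R)^{-1}=\matrix^T J\cdot J(\matrix^{-1})^T J=(\matrix^{-1}\matrix)^T J=J.
\]

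With this identity in hand, the invariance of $\nform$ under $g_\matrix$ is immediate. By construction $g_\matrix$ sends $(\xx_1,\xx_2,\xx_3)$ to $(\matrix\xx_1,\xx_2,(\matrix^R)^{-1}\xx_3)$, so the pairing piece transforms as $(\matrix\xx_1)^T J\bigl((\matrix^R)^{-1}\xx_3\bigr)=\xx_1^T\bigl(\matrix^T J(\matrix^R)^{-1}\bigr)\xx_3=\xx_1^T J\xx_3$, while the middle block $\w\nform(\xx_2)$ is fixed. This gives $\nform\circ g_\matrix=\nform$ for every $m$-normalized $\nform$.

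Finally, I would check that $g_\matrix\in\SL_{d+1}^{\pm}(\R)$. Using $\det(J)^2=1$ together with $\matrix^R=J\matrix^T J$, one sees $\det(\matrix^R)=\det(\matrix)$, so the block structure \eqref{gmatrix} forces $\det(g_\matrix)=\det(\matrix)\cdot 1\cdot\det(\matrix)^{-1}=1$. There is no real obstacle here; the only subtle point is fixing the indexing convention for ``reverse'' so that $\matrix^R=J\matrix^T J$ comes out correctly, after which invariance follows in a single line.
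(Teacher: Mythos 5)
Your proof is correct and is exactly the ``direct computation'' the paper alludes to (the paper gives no further detail): writing the pairing part as $\xx_1^T J\xx_3$ with $J$ the antidiagonal matrix, identifying $\matrix^R=J\matrix^TJ$, and deducing $\matrix^TJ(\matrix^R)^{-1}=J$ is a clean and complete way to carry it out, and you rightly also verify $\det(g_\matrix)=1$ so that $g_\matrix\in\SL_{d+1}^{\pm}(\R)$. Nothing is missing.
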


Next, for each $m\leq \frac{d + 1}2$ and $\tt\in\R^m$, 
let
\begin{equation}
\label{gtdef1}
g_\tt = {g_{\diag(e^{-t_0},\ldots,e^{-t_{m - 1}})}} = \left[\begin{array}{ccccccc}
e^{-t_0} &&&&&&\\
& \ddots &&&&&\\
&& e^{-t_{m - 1}} &&&&\\
&&& I_{d + 1 - 2m} &&&\\
&&&& e^{t_{m - 1}} &&\\
&&&&& \ddots &\\
&&&&&& e^{t_0}
\end{array}\right].
\end{equation}
Of particular importance will be the case $m = 1$, in which case
\begin{equation}
\label{gtdef2}
g_t = \left[\begin{array}{ccc}
e^{-t} &&\\
& I_{d - 1} &\\
&& e^t
\end{array}\right].
\end{equation}

%
%
A simple computation immediately yields the following observation, which will turn out to be quite useful:

\begin{observation}
\label{observationgt}
For $t\geq 0$ and $\xx\in\R^{d + 1}$,
\begin{equation}
\label{gt}
\dist(\xx,\LL_1) \leq \|g_t(\xx)\|,
\end{equation}
where $\LL_1$ is as in \eqref{Lmdef}.
\end{observation}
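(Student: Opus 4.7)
The plan is to verify this by direct computation, unpacking the definitions of $g_t$, $\LL_1$, and the max norm $\|\cdot\|$.

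First I would write out the coordinates: for $\xx = (x_0, x_1, \ldots, x_d) \in \R^{d+1}$, the definition \eqref{gtdef2} gives
\[
g_t(\xx) = (e^{-t} x_0,\, x_1,\, \ldots,\, x_{d-1},\, e^t x_d),
\]
so that
\[
\|g_t(\xx)\| = \max\bigl(e^{-t}|x_0|,\, |x_1|,\, \ldots,\, |x_{d-1}|,\, e^t |x_d|\bigr).
\]
On the other hand, since $\LL_1 = \R\ee_0$ is the first coordinate axis and $\|\cdot\|$ denotes the max norm (per the convention set after \eqref{projective height}), the distance from $\xx$ to $\LL_1$ is realized by the nearest point $(x_0, 0, \ldots, 0)$ and equals
\[
\dist(\xx, \LL_1) = \max\bigl(|x_1|,\, |x_2|,\, \ldots,\, |x_d|\bigr).
\]

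The inequality then follows by a termwise comparison: for each $i = 1, \ldots, d-1$ the $i$th entry of $g_t(\xx)$ is exactly $|x_i|$, and for the last coordinate the hypothesis $t \geq 0$ gives $e^t |x_d| \geq |x_d|$. Taking the maximum over $i = 1, \ldots, d$ yields $\dist(\xx, \LL_1) \leq \|g_t(\xx)\|$, as desired. There is no genuine obstacle here; the observation is essentially a restatement of the fact that conjugating by a diagonal matrix which expands the ``last'' direction and contracts the ``first'' direction cannot decrease the size of the coordinates transverse to $\LL_1$.
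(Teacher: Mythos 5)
Your computation is correct and is exactly the ``simple computation'' the paper alludes to without writing out: the paper omits the proof entirely, and your termwise comparison of $\max(|x_1|,\ldots,|x_d|)$ with the entries of $g_t(\xx)$ (using $e^t\geq 1$ on the last coordinate) is the intended argument. Nothing to add.
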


\subsection{The space of lattices; Mahler's compactness criterion}
\label{subsectionmahler}
As stated in the introduction, our main tool for proving theorems concerning intrinsic approximation on $M_\form$ is a correspondence principle between approximations of a point in $M_\form$ and dynamics in the space of lattices. We will describe this correspondence principle in Section \ref{sectioncorrespondence} below, while here we introduce the space of lattices which we are interested in, namely the space of \emph{$\form$-arithmetic} lattices.

\begin{definition}
Fix a quadratic form $\form:\R^{d + 1}\to\R$. A lattice $\Lambda\subgp\R^{d + 1}$ is \emph{$\form$-arithmetic} if $\form(\Lambda)\subset\Z$. (Symmetrically, we may also say that $\form$ is $\Lambda$-\emph{arithmetic}.) The set of 
$\form$-arithmetic lattices will be denoted {by} $\Omega_\form$, while the set of all 
lattices in $\R^{d + 1}$ will be denoted  {by}  $\Omega_d$.
\end{definition}
\begin{observation}
A quadratic form is $\Z^{d + 1}$-arithmetic if and only if its coefficients are integral.
\end{observation}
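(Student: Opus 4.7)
The observation is an elementary equivalence, and the natural approach is to verify the two implications separately by writing $\form$ in its monomial expansion $\form(\xx) = \sum_{0\le i\le j\le d} a_{ij} x_i x_j$.

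The forward direction (integral coefficients $\Rightarrow$ $\Z^{d+1}$-arithmetic) is immediate: evaluating $\form$ at any $\nn\in\Z^{d+1}$ yields a finite sum of products of integers, hence an integer, so $\form(\Z^{d+1})\subset\Z$.

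For the converse, assume $\form(\Z^{d+1})\subset\Z$ and recover the monomial coefficients by test-evaluation at standard basis vectors. First, $\form(\ee_i)=a_{ii}\in\Z$ gives integrality of the diagonal coefficients. Second, for $i<j$ the polarization identity
\[
a_{ij} \;=\; \form(\ee_i+\ee_j)-\form(\ee_i)-\form(\ee_j)
\]
expresses each off-diagonal coefficient as an integer combination of values of $\form$ on $\Z^{d+1}$, and hence $a_{ij}\in\Z$. This covers every coefficient of the monomial expansion.

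There is no real obstacle here; the only point worth flagging is the convention that ``coefficients'' refers to the coefficients of $\form$ as a polynomial in $x_0,\ldots,x_d$, not to the entries of the symmetric matrix $\tfrac12(A+A^T)$ representing the associated bilinear form $B_\form$ (which need only be half-integers). Under the polynomial convention just described, the two-step verification above completes the proof.
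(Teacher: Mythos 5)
Your proof is correct and is exactly the argument the paper has in mind; the paper states this as an unproved Observation, and the polarization identity $a_{ij}=\form(\ee_i+\ee_j)-\form(\ee_i)-\form(\ee_j)$ together with $\form(\ee_i)=a_{ii}$ is the standard verification. Your remark about the convention is also consistent with the paper's usage, which elsewhere notes that integral coefficients only force $B_\form(\rr_1,\rr_2)\in\tfrac12\Z$.
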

Clearly, $\Omega_\form$ is preserved by the action of $\orth(\form)$. If ${\Lambda}\in\Omega_\form$ is fixed, we denote its stabilizer by $\orth(\form;{\Lambda})$ and its orbit by $\Omega_{\form,{\Lambda}}$. We will implicitly identify $\Omega_{\form,{\Lambda}}$ with the homogeneous space $\orth(\form)/\orth(\form;{\Lambda})$ via the map $g\orth(\form;{\Lambda})\mapsto g{\Lambda}$. This automatically endows $\Omega_{\form,{\Lambda}}$ with a topological structure and, since $\orth(\form)$ is unimodular and $\orth(\form;{\Lambda})$ is discrete, a Haar measure, which we will denote by $\mu_{\form,{\Lambda}}$.

Viewing $\Omega_{\form,{\Lambda}}$ as a homogeneous space could conceivably give it a different topology than viewing it as a subset of $\Omega_d$, which has its own topology from its identification with $\GL_{d + 1}(\R)/\GL_{d + 1}(\Z)$ coming from the map $g\GL_{d + 1}(\Z)\mapsto g(\Z^{d + 1})$. Fortunately, it turns out that these topologies are identical:
\begin{proposition}
\label{propositioninclusionproper}
The inclusion map $\Omega_{\form,{\Lambda}}\to\Omega_d$ is proper and continuous, when both spaces are endowed with the topologies coming from the identification with their corresponding homogeneous spaces. Consequently, the topology on $\Omega_{\form,{\Lambda}}$ is unambiguous.
\end{proposition}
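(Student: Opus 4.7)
The plan is to treat continuity and properness separately, then derive the unambiguity of the topology as a formal consequence.

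Continuity is immediate: the action map $\SL_{d+1}^{\pm}(\R)\times\Omega_d\to\Omega_d$ is continuous, so the orbit map $\SL_{d+1}^{\pm}(\R)\to\Omega_d$ sending $g\mapsto g\Lambda_*$ is continuous. Its restriction to the closed subgroup $\orth(\form)$ is therefore continuous, and is constant on right cosets of the stabilizer $\orth(\form;\Lambda_*)$, so it descends to a continuous map from $\Omega_{\form,\Lambda_*}=\orth(\form)/\orth(\form;\Lambda_*)$; this descended map is precisely the inclusion.

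For properness, I would reduce to a standard statement about arithmetic quotients. Choose $g_0\in\SL_{d+1}^{\pm}(\R)$ with $g_0\Z^{d+1}=\Lambda_*$, and set $\form'=\form\circ g_0$. Then $\form'(\Z^{d+1})\subset\Z$ forces the coefficients of $\form'$ to lie in $\tfrac12\Z$, and since $\form'$ is nonsingular (being a conjugate of $\form$), the orthogonal group $\orth(\form')=g_0^{-1}\orth(\form)g_0$ is a semisimple algebraic $\Q$-group with integer points $\orth(\form')(\Z)=g_0^{-1}\orth(\form;\Lambda_*)g_0$. Conjugation by $g_0$ induces an $\orth(\form)$-equivariant homeomorphism between $\Omega_{\form,\Lambda_*}$ and the $\orth(\form')(\R)$-orbit of $\Z^{d+1}$ in $\Omega_d$, compatible with the two inclusions into $\Omega_d$. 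By the Borel--Harish-Chandra theorem applied to the semisimple $\Q$-group $\orth(\form')$, the map $\orth(\form')(\R)/\orth(\form')(\Z)\to\Omega_d$ is proper with closed image; conjugating back yields the desired properness.

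Combining these two ingredients, the inclusion is a continuous, injective, proper map between locally compact Hausdorff spaces, hence a closed topological embedding. Consequently, the subspace topology that $\Omega_{\form,\Lambda_*}$ inherits from $\Omega_d$ coincides with its intrinsic topology as $\orth(\form)/\orth(\form;\Lambda_*)$, which is the final claim.

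The main obstacle is the properness step, whose substantive content is Borel--Harish-Chandra. An alternative, avoiding that machinery, would be a direct Mahler-style argument: closedness of $\Omega_\form$ in $\Omega_d$ is immediate since $\form$ is continuous and $\Z\subset\R$ is discrete, but one still needs to show that the specific orbit $\Omega_{\form,\Lambda_*}$ is closed inside $\Omega_\form$ and that the orbit map admits convergent subsequences after right-multiplication by elements of $\orth(\form;\Lambda_*)$. These steps essentially recover the content of Borel--Harish-Chandra in this setting, so invoking the theorem directly seems the cleanest route.
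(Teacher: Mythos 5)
Your argument is correct, but it takes a genuinely different route from the paper's. You handle properness by conjugating $\Lambda_*$ to $\Z^{d+1}$, observing that $\form'=\form\circ g_0$ is then integral (hence rational) so that $\orth(\form')$ is a reductive $\Q$-group, and invoking the closed-orbit/properness theorem from reduction theory for the orbit map $\orth(\form')(\R)/\orth(\form')(\Z)\to\Omega_d$. That reduction is airtight: the identification of $g_0^{-1}\orth(\form;\Lambda_*)g_0$ with $\orth(\form')(\Z)$ and the compatibility of the two inclusions under the homeomorphism $\Lambda\mapsto g_0^{-1}\Lambda$ both check out, though the precise input you need is the closed-orbit theorem for reductive $\Q$-subgroups of $\SL_{d+1}$ rather than the lattice theorem most commonly labelled ``Borel--Harish-Chandra,'' so the citation should be sharpened. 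The paper instead gives a short, self-contained sequential argument: if $\Lambda_n=g_n\Z^{d+1}\to\Lambda_0=g_0\Z^{d+1}$ with $g_n\to g_0$, then the $\Z^{d+1}$-arithmetic forms $\form\circ g_n$ converge to $\form\circ g_0$ and hence eventually stabilize, because the set of $\Z^{d+1}$-arithmetic forms is discrete; thus $g_ng_0^{-1}\in\orth(\form)$ for large $n$, which simultaneously shows that the orbit is closed and that convergence in $\Omega_d$ forces convergence in the intrinsic symmetric-space topology. Your closing remark that a direct argument would ``essentially recover the content of Borel--Harish-Chandra'' therefore undersells the elementary route: the discreteness of integral quadratic forms does all the work, with no reduction theory and no use of nonsingularity or semisimplicity. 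What your approach buys is a clean conceptual frame and generality; what the paper's buys is brevity and self-containedness.
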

\begin{proof}
The continuity of the inclusion map follows directly from the continuity of the inclusion map from $\orth(\form)$ to $\GL_{d + 1}(\R)$. Let us show that the inclusion map is proper. Let $(\Lambda_n)_1^\infty$ be a sequence in $\Omega_{\form,{\Lambda}}$ converging to a point $\Lambda_0\in\Omega_d$. Then there exist $\GL_{d + 1}(\R)\ni g_n\to g_0\in \GL_{d + 1}(\R)$ such that $\Lambda_n = g_n(\Z^{d + 1})$ for all $n\geq 0$. This implies that for all $n\geq 1$, $\form_n := \form\circ g_n$ is a $\Z^{d + 1}$-arithmetic quadratic form, and $\form_n\to \form_0 := \form\circ g_0$. Since the space of $\Z^{d + 1}$-arithmetic quadratic forms is discrete (being identical to the space of quadratic forms with coefficients in $\Z$), we have $\form_n = \form_0$ for all sufficiently large $n$. (Thus \emph{a posteriori} $\form_0$ is $\Z^{d + 1}$-arithmetic, or equivalently $\Lambda_0 \in \Omega_\form$.) For $n$ satisfying $\form_n = \form_0$, we have $h_n := g_n g_0^{-1}\in \orth(\form)$; in particular $\Lambda_0 = h_n^{-1}(\Lambda_n) \in \Omega_{\form,{\Lambda}}$. On the other hand $\Lambda_n = h_n\Lambda_0$ and $h_n\to h_0 = \id$; this implies that $\Lambda_n\to \Lambda_0$ in the topology on $\Omega_{\form,{\Lambda}}$ coming from its identification with the homogeneous space $\orth(\form)/\orth(\form;{\Lambda})$.
\end{proof}
We now recall Mahler's famous compactness criterion, and deduce an analogue in the context of quadratic forms. For $\Lambda\in\Omega_d$ let
\begin{equation}
\label{mindist}
\mindist(\Lambda) := \min_{\pp\in\Lambda\smallsetminus\{\0\}} \|\pp\|.
\end{equation}
\begin{theorem}[Mahler's compactness criterion, {\cite[Theorem 2]{Mahler_compactness}}]
\label{theoremmahler}
A set $S\subset\Omega_d$ is precompact if and only if $\mindist$ is bounded from below on $S$, and the covolumes of all lattices in $S$ are uniformly bounded from above.
\end{theorem}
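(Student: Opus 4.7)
The plan is to prove the two directions of this classical criterion separately, using the fact that $\mindist$ is continuous on $\Omega_d$ for the easy direction, and Minkowski's theorems on successive minima for the hard direction.

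For the easy direction, suppose $S\subset\Omega_d$ is precompact and let $\Lambda_n\in S$ with $\mindist(\Lambda_n)\to 0$. I would first verify that $\mindist:\Omega_d\to(0,\infty)$ is continuous: if $\Lambda_n = g_n(\Z^{d+1})\to \Lambda_0 = g_0(\Z^{d+1})$ in $\Omega_d$, then we may choose the $g_n\to g_0$ in $\SL_{d+1}(\R)$, and a short argument using the local finiteness of $\Lambda_0\cap B(\0,R)$ shows that the shortest nonzero vectors of $\Lambda_n$ converge to shortest nonzero vectors of $\Lambda_0$, so $\mindist(\Lambda_n)\to\mindist(\Lambda_0)$. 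Hence after passing to a convergent subsequence $\Lambda_n\to\Lambda_0$ in the closure of $S$ in $\Omega_d$, we would have $\mindist(\Lambda_0)=0$, which is impossible since $\Lambda_0$ is a lattice.

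For the hard direction, suppose $\mindist(\Lambda)\geq\epsilon$ for every $\Lambda\in S$. Given any sequence $(\Lambda_n)$ in $S$, I would show that a subsequence converges in $\Omega_d$. Let $\lambda_1(\Lambda_n)\leq\cdots\leq\lambda_{d+1}(\Lambda_n)$ denote the successive minima of $\Lambda_n$ with respect to the unit ball. By Minkowski's second theorem, $\prod_i\lambda_i(\Lambda_n)\asymp_\times 1$ since $\Lambda_n$ is unimodular, and by hypothesis $\lambda_1(\Lambda_n)\geq\epsilon$, so every $\lambda_i(\Lambda_n)$ is pinched between two positive constants depending only on $d$ and $\epsilon$. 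Then I would invoke the classical fact (due to Mahler; see e.g.\ Cassels's \emph{Geometry of Numbers}) that every lattice $\Lambda\leq\R^{d+1}$ admits a basis $v_1,\ldots,v_{d+1}$ with $\|v_i\|\asymp_\times \lambda_i(\Lambda)$, where the implied constant depends only on $d$.

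This yields, for each $n$, a matrix $g_n\in\SL_{d+1}^\pm(\R)$ whose columns form a basis of $\Lambda_n$, with $\|g_n\|$ and $\|g_n^{-1}\|$ bounded independently of $n$. By the Bolzano--Weierstrass theorem applied in the compact subset $\{g\in\SL_{d+1}^\pm(\R):\|g\|,\|g^{-1}\|\leq C\}$, we may pass to a subsequence along which $g_n\to g_0$, and then $\Lambda_n=g_n(\Z^{d+1})\to g_0(\Z^{d+1})\in\Omega_d$. The main technical ingredient is the existence of a bounded basis for a lattice with bounded successive minima; once that is granted, everything else is topological. As this is a well-known classical result (cited to \cite{Mahler_compactness}), a one-sentence indication of this strategy is really all that is needed.
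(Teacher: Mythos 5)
The paper does not prove this statement; it is quoted verbatim from the literature (Mahler), so there is no in-paper argument to compare against. Your proof is the standard and correct one: continuity of $\mindist$ on $\Omega_d$ handles the forward direction, and for the converse, Minkowski's second theorem pinches all successive minima between $\epsilon$ and $C_d/\epsilon^d$, the existence of a reduced basis with $\|v_i\|\lesssim_\times\lambda_i$ gives matrices $g_n$ with $\|g_n\|$ and $\|g_n^{-1}\|$ uniformly bounded (the bound on $\|g_n^{-1}\|$ coming from the adjugate formula and unimodularity), and Bolzano--Weierstrass finishes. The only ingredient you take on faith is the reduced-basis lemma, which you correctly identify and source; for a classical cited result this level of detail is entirely appropriate.
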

For $\Lambda\in\Omega_\form$ let
\begin{equation*}
\label{mindistQ}
\mindist_\form(\Lambda) = \min_{\pp\in\Lambda\cap L_\form\smallsetminus\{\0\}}\|\pp\|.
\end{equation*}
We let $\mindist_\form(\Lambda) = \infty$ if $\Lambda\cap L_\form\smallsetminus\{\0\} = \emptyset$.
\begin{observation}
\label{observationmindistQ}
If we let
\[
\|\form\| = \max_{\|\xx\| = \|\yy\| = 1}|B_\form(\xx,\yy)|
\]
then $\min(\mindist_\form,1/\sqrt{\|\form\|}) \leq \mindist \leq \mindist_\form$.
\end{observation}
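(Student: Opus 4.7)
The plan is to verify the two inequalities separately; the upper bound is essentially immediate, and all the content lies in the lower bound. For $\mindist(\Lambda) \leq \mindist_\form(\Lambda)$, note that $\mindist_\form(\Lambda)$ is defined as the minimum of $\|\cdot\|$ over the subset $\Lambda \cap L_\form \butnot \{\0\}$ of $\Lambda \butnot \{\0\}$, so enlarging the set over which the minimum is taken can only decrease the value; in the degenerate case where $\Lambda \cap L_\form = \{\0\}$, the convention $\mindist_\form(\Lambda) = \infty$ makes the bound trivial.

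For the lower bound, the plan is to fix any nonzero $\pp \in \Lambda$ and split cases according to whether $\pp$ lies on the light cone. If $\form(\pp) = 0$, then $\pp \in \Lambda \cap L_\form \butnot \{\0\}$, so $\|\pp\| \geq \mindist_\form(\Lambda)$ by definition. Otherwise, the $\form$-arithmeticity of $\Lambda$ forces $\form(\pp) \in \Z \butnot \{0\}$, so $|\form(\pp)| \geq 1$; combined with the bound $|\form(\pp)| = |B_\form(\pp,\pp)| \leq \|\form\| \cdot \|\pp\|^2$ coming directly from the definition of $\|\form\|$ together with the bilinearity of $B_\form$, this gives $\|\pp\| \geq 1/\sqrt{\|\form\|}$. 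In either case one has $\|\pp\| \geq \min(\mindist_\form(\Lambda), 1/\sqrt{\|\form\|})$; taking the minimum over $\pp \in \Lambda \butnot \{\0\}$ yields the lower bound.

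No serious obstacle is expected: the argument amounts to a dichotomy based on whether the integer $\form(\pp)$ is zero or nonzero, combined with the trivial estimate $|\form(\pp)| \leq \|\form\| \cdot \|\pp\|^2$. The only mildly delicate point is respecting the $+\infty$ convention when $\Lambda$ meets $L_\form$ trivially, which only makes the first inequality easier.
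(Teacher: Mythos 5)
Your proof is correct and follows exactly the same route as the paper's (which consists of the single line: for $\pp\in\Lambda\butnot L_\form$, $\|\pp\|\geq\sqrt{|\form(\pp)|/\|\form\|}\geq 1/\sqrt{\|\form\|}$, using $\form$-arithmeticity to get $|\form(\pp)|\geq 1$). Your write-up just makes explicit the trivial upper bound and the case dichotomy that the paper leaves implicit.
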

\begin{proof}
For $\pp\in\Lambda\smallsetminus L_\form$, $\|\pp\| \geq \sqrt{|\form(\pp)|/\|\form\|} \geq 1/\sqrt{\|\form\|}$.
\end{proof}
\begin{corollary}[Analogue of Mahler's compactness criterion]
\label{corollarymahlerquadratic}
Fix ${\Lambda}\in\Omega_\form$. A set $S\subset\Omega_{\form,{\Lambda}}$ is precompact if and only if $\mindist_\form$ is bounded from below on $S$.
\end{corollary}
\begin{proof}
By Observation \ref{observationmindistQ}, $\mindist_\form$ is bounded from below on $S$ if and only if $\mindist$ is bounded from below on $S$. But by Theorem \ref{theoremmahler}, since the covolumes of all lattices in $\Omega_{\form,{\Lambda}}$ are the same, $\mindist$ is bounded from below if and only if $S$ is precompact in the topology of $\Omega_d$. By Proposition \ref{propositioninclusionproper}, this occurs if and only if $S$ is precompact in the topology of $\Omega_{\form,{\Lambda}}$. (Here we use not only the fact that the topology on $\Omega_{\form,{\Lambda}}$ is the one induced from $\Omega_d$, but also the fact that the inclusion map is proper, and consequently $\Omega_{\form,{\Lambda}}$ is closed in $\Omega_d$.)
\end{proof}

%
%
%


\subsection{Relation to Kleinian lattices}
\label{subsectionkleinian}

In this subsection, we describe the relation between the intrinsic Diophantine approximation of a quadric hypersurface $M_\form$ satisfying $\Qrank = \Rrank = 1$ and the approximation of points in the boundary of $d$-dimensional hyperbolic space $\H^d$ by parabolic fixed points in a lattice $\Gamma\subgp\Isom(\H^d)$ which depends on the quadric hypersurface $M_\form$. Since the latter situation is well-studied, this correspondence can be used to immediately prove the theorems of \6\ref{subsectionquadratic} in the case $\Qrank = \Rrank = 1$. (However, our proofs of the theorems of \6\ref{subsectionquadratic} in the general case are not dependent on assuming $\Rrank > 1$, so this subsection can be skipped without any loss of generality.)

Let $\form:\R^{d + 1}\to\R$ be a quadratic form with integer coefficients satisfying $\Qrank = \Rrank = 1$. Then the signature of $\form$ is either $(d,1)$ or $(1,d)$. Without loss of generality, we will suppose that its signature is $(d,1)$. The \emph{hyperboloid model of hyperbolic geometry} is the set
\[
\H^d := \{\xx\in\R^{d + 1} : \form(\xx) = -1\}
\]
with the Riemannian metric 
$\form|_{\H^d}$ (its positive-definiteness is guaranteed by the fact that the signature of $\form$ is $(d,1)$). The \emph{hyperbolic distance} is given by the formula
\[
\cosh\dist(\xx,\yy) = |B_\form(\xx,\yy)|.
\]
Note that by Sylvester's law of inertia, up to isometry the space $(\H^d,\dist)$ does not depend on $\form$, but only on $d$. For the equivalence of the hyperboloid model with other standard models of hyperbolic geometry, see e.g.\ \cite{CFKP}. The \emph{boundary} of $\H^d$, denoted $\del\H^d$, is defined to be the boundary of $[\H^d]$ in $\P_\R^d$. Observe that $\del\H^d = M_\form$. A \emph{horoball} in $\H^d$ is a set of the form
\[
\{\xx\in\H^d : \busemann_{[\rr]}(\zz,\xx) > t\},
\]
where $\zz\in\H^d$, $[\rr]\in \del\H^d$, $t\in\R$, and $\busemann_{[\rr]}$ denotes the \emph{Busemann function}
\[
\busemann_{[\rr]}(\zz,\xx) = \lim_{[\yy]\to[\rr]}[\dist(\yy,\zz) - \dist(\yy,\xx)].
\]
Such a horoball is said to be \emph{centered} at the point $[\rr]$. The isometry group of $\H^d$ is given by
\[
\Isom(\H^d) = \orth(\form).
\]
Since $\form$ has integer coefficients, the subgroup
\[
\Gamma := \orth(\form;\Z) := \orth(\form) \cap \GL_{d + 1}(\Z)
\]
is a lattice in $\orth(\form)$ \cite[Theorem 7.8]{BorelHarish-Chandra1}. Let $P_\Gamma\subset\del\H^d$ denote the set of parabolic fixed points of $\Gamma$.

We now state the relation between intrinsic approximation of $M_\form$ and approximation of $\del\H^d$ by $P_\Gamma$:

\begin{proposition}
\label{propositionkleinian}
~
\begin{itemize}
\item[(i)] There exists a $\Gamma$-invariant disjoint family of horoballs $(H_{[\rr]})_{[\rr]\in \P_\Q^d\cap M_\form}$ such that for each $[\rr]\in \P_\Q^d\cap M_\form$, $H_{[\rr]}$ is centered at $[\rr]$ and 
\begin{equation}
\label{heightasymp}
H_\std([\rr]) \asymp_\times e^{\dist(\zz,H_{[\rr]})},
\end{equation}
where $\zz\in\H^d$ is fixed.
\item[(ii)] $\P_\Q^d\cap M_\form = P_\Gamma$.
\end{itemize}
\end{proposition}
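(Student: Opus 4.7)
The approach is the standard dictionary, in the $\Qrank=1$ case, between rational isotropic lines of $\form$ and cusps of the arithmetic lattice $\Gamma\leq\Isom(\H^d)$.

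For (ii), the inclusion $P_\Gamma\subseteq\P_\Q^d\cap M_\form$ is immediate: any parabolic $\gamma\in\Gamma\subset\GL_{d+1}(\Z)$ is unipotent, so $\ker(\gamma-I)$ is a $\Q$-rational subspace containing the unique fixed isotropic line. For the reverse inclusion, given $[\rr]\in\P_\Q^d\cap M_\form$ I would observe that the stabilizer $P_{[\rr]}$ of $[\rr]$ in $\orth(\form)$ is a proper rational parabolic subgroup (using $\Qrank=1$, which forces the rational isotropic line $\R\pp_\rr$ to be a maximal totally isotropic $\Q$-subspace), so its unipotent radical $U_{[\rr]}$ is a nontrivial $\Q$-group. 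By Borel--Harish-Chandra or the reduction theory of \cite[Proposition 2.2]{Leuzinger}, $\Gamma\cap U_{[\rr]}$ is a lattice in $U_{[\rr]}$, hence nontrivial; any non-identity element is a parabolic in $\Gamma$ fixing $[\rr]$.

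For (i), for each $[\rr]\in\P_\Q^d\cap M_\form$ choose a primitive integer representative $\pp_\rr\in L_\form\cap\Z^{d+1}$, oriented consistently so that $-B_\form(\cdot,\pp_\rr)>0$ on the chosen component of $\H^d$; this choice is equivariant under the action of $\Gamma$ on $\H^d$. For a constant $s_0\in(0,1/2)$ to be fixed below, set
\[
H_{[\rr]} := \{\xx\in\H^d : -B_\form(\xx,\pp_\rr)\leq s_0\}.
\]
Since $\log(-B_\form(\cdot,\pp_\rr))$ is a Busemann function at $[\rr]$ in the hyperboloid model, $H_{[\rr]}$ is indeed a horoball based at $[\rr]$, and the family is $\Gamma$-invariant because $\Gamma$ preserves $B_\form$ and permutes primitive, future-directed integer isotropic vectors. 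For the height estimate \eqref{heightasymp}, the restriction of the linear functional $B_\form(\zz,\cdot)$ to the light cone is nonvanishing away from the origin (its kernel $\zz^\perp$ is positive definite since $\zz$ is timelike, hence meets $L_\form$ only at $\0$), so by compactness $|B_\form(\zz,\pp)|\asymp_\times\|\pp\|$ for $\pp\in L_\form\butnot\{\0\}$. A one-line computation along the geodesic from $\zz$ to $[\rr]$ in $\H^d$ gives $\dist(\zz,H_{[\rr]})=\log(-B_\form(\zz,\pp_\rr)/s_0)$ whenever $\zz\notin H_{[\rr]}$, which delivers \eqref{heightasymp}; the finitely many rational points for which $\zz\in H_{[\rr]}$ are absorbed into the implied constants.

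The main obstacle is proving the disjointness of $(H_{[\rr]})$. Here I would use an arithmetic input: for distinct $[\pp]\neq[\pp']$ with primitive integer representatives $\pp,\pp'$, the $\Q$-span of $\pp,\pp'$ cannot be totally isotropic (that would violate $\Qrank=1$), so $B_\form(\pp,\pp')\neq 0$, and integrality of $2B_\form$ forces $|B_\form(\pp,\pp')|\geq 1/2$. A Lorentzian ``Cauchy--Schwarz'' computation (decompose $\xx\in\H^d$ into its $\Span(\pp,\pp')$- and orthogonal-complement pieces; the latter is positive definite because the plane $\Span(\pp,\pp')$ has signature $(1,1)$, and then use $\form(\xx)=-1$) yields $|B_\form(\xx,\pp)|\cdot|B_\form(\xx,\pp')|\geq\tfrac12|B_\form(\pp,\pp')|\geq\tfrac14$, so any $\xx\in H_{[\pp]}\cap H_{[\pp']}$ forces $s_0^2\geq 1/4$; choosing $s_0<1/2$ yields disjointness. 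Alternatively, this disjointness is a direct consequence of Garland--Raghunathan's theorem on cusp neighborhoods for $\Qrank=1$ arithmetic lattices, which could be cited instead.
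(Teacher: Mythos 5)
Your proof is correct, and two of its components take genuinely different routes from the paper's. For the disjointness of the horoballs, the paper transports a point $\xx\in H_{[\rr_1]}\cap H_{[\rr_2]}$ to a fixed basepoint $\ww$ by some $g\in\orth(\form)$, uses $|B_\form(\ww,\cdot)|\asymp_\times\|\cdot\|$ on $L_\form$ to get $\|g(\rr_i)\|\lesssim_\times\epsilon$ and hence $|B_\form(\rr_1,\rr_2)|\lesssim_\times\epsilon^2$, and then invokes half-integrality and $\Qrank=1$; this only yields disjointness for an inexplicit ``$\epsilon$ sufficiently small.'' Your Lorentzian Cauchy--Schwarz computation (decomposing $\xx$ along $\Span(\pp,\pp')$, which has signature $(1,1)$, and its positive-definite complement) reaches the same arithmetic input $|B_\form(\pp,\pp')|\geq 1/2$ but converts it directly into $|B_\form(\xx,\pp)|\cdot|B_\form(\xx,\pp')|\geq 1/4$, giving the explicit threshold $s_0<1/2$; I checked the inequality and it is right. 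For the inclusion $\P_\Q^d\cap M_\form\subseteq P_\Gamma$ in (ii), the paper argues dynamically: the basepoint $\zz$ lies outside the $\Gamma$-invariant horoball family, so $[\rr]$ is not a radial limit point, and the lattice dichotomy (radial or parabolic) finishes; your argument is instead structural, producing a parabolic element explicitly as a nontrivial element of $\Gamma\cap U_{[\rr]}$, which is a lattice in the unipotent radical of the rational parabolic stabilizer. Both are valid; the paper's version makes (ii) a corollary of (i), while yours is independent of the horoball construction and generalizes more readily. The remaining pieces (the Busemann formula, $\Gamma$-invariance, the comparison $|B_\form(\zz,\pp)|\asymp_\times\|\pp\|$, and the distance computation) coincide with the paper's.

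One step deserves a sentence more in the easy inclusion $P_\Gamma\subseteq\P_\Q^d\cap M_\form$: knowing that $\ker(\gamma-I)$ is a rational subspace \emph{containing} the fixed isotropic line $\ell$ does not by itself make $\ell$ rational. The fix is to observe that $\ker(\gamma-I)$ meets $L_\form$ only in $\ell$ (a parabolic has a unique fixed boundary point), so $\form$ restricted to $\ker(\gamma-I)$ is positive semidefinite with radical exactly $\ell$; the radical of a rational form on a rational subspace is rational. The paper's own proof is no more careful here (it asserts that the fixed subspace \emph{equals} the line, which fails for $d\geq 3$), so this is a shared, easily repaired imprecision rather than a gap in your approach.
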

Using Proposition \ref{propositionkleinian}, one may translate \cite[Theorems 1 and 4]{StratmannVelani}, \cite[Theorem C]{Stratmann3}, and \cite[Theorem 2]{MelianPestana} (see also \cite{FSU4} and the references therein for subsequent generalizations) into the context of quadratic forms, yielding the results of \6\ref{subsectionquadratic} in the case $\Qrank = \Rrank = 1$. Details are left to the reader.

\begin{proof}[Proof of \text{(i)}]
Fix $\epsilon > 0$, and for each $[\rr]\in \P_\Q^d\cap M_\form$ let
\[
H_{[\rr]} = \{\xx\in\H^d : |B_\form(\xx,\rr)| < \epsilon\},
\]
where $\rr$ is the unique primitive integral representative of $[\rr]$. The fact that $H_{[\rr]}$ is a horoball centered at $[\rr]$ follows from the following well-known formula for the Busemann function in the hyperboloid model:
\[
\busemann_{[\rr]}(\xx,\yy) = \log\frac{|B_\form(\xx,\rr)|}{|B_\form(\yy,\rr)|}\cdot
\]
Since $\P_\Q^d\cap M_\form$ and $\form$ are both invariant under $\Gamma$, it is clear that the collection $(H_{[\rr]})_{[\rr]\in \P_\Q^d\cap M_\form}$ is $\Gamma$-invariant. Next, we will show that the collection $(H_{[\rr]})_{[\rr]\in \P_\Q^d\cap M_\form}$ is disjoint for $\epsilon$ sufficiently small. Indeed, suppose $\xx\in H_{[\rr_1]}\cap H_{[\rr_2]}$, and apply $g\in\orth(\form)$ such that $g(\xx) = \ww$, where $\ww\in\H^d$ is fixed. Then $|B_\form\big(\ww,g(\rr_i)\big)| < \epsilon$, where $\rr_i$ is the primitive integral representative of $[\rr_i]$. On the other hand, since $\form$ has signature $(d,1)$ and $\form(\ww) = -1$, we have
\begin{equation}
\label{forallr}
|B_\form(\ww,\rr)| \asymp_\times \|\rr\| \text{ for all }\rr\in L_\form.
\end{equation}
Thus $\|g(\rr_i)\| \lesssim_\times \epsilon$, and so $|B_\form(\rr_1,\rr_2)| \lesssim_\times \|\form\|\epsilon^2$. Thus for $\epsilon$ sufficiently small, $|B_\form(\rr_1,\rr_2)| < 1/2$. On the other hand $B_\form(\rr_1,\rr_2)\in\Z/2$ since $\form$ has integer coefficients, so $B_\form(\rr_1,\rr_2) = 0$. Since $\Qrank = 1$, this implies $[\rr_1] = [\rr_2]$.

Since the horoballs $(H_{[\rr]})_{[\rr]\in \P_\Q^d\cap M_\form}$ are disjoint open subsets of the connected set $\H^d$, there exists $\zz\in\H^d\smallsetminus\bigcup_{[\rr]} H_{[\rr]}$. Now fix $[\rr]\in\P_\Q^d\cap M_\form$, and we will demonstrate \eqref{heightasymp}. Letting $\xx\in\del H_{[\rr]}$ be arbitrary, we calculate
\[
e^{\dist(\zz,H_{[\rr]})} = e^{\busemann_{[\rr]}(\zz,\xx)} = \frac{|B_\form(\zz,\rr)|}{\epsilon}\cdot
\]
Combining with \eqref{forallr} yields \eqref{heightasymp}.
\end{proof}
\begin{proof}[Proof of \text{(ii)}]
Suppose that $[\rr]$ is a parabolic fixed point of $\Gamma$, say $g([\rr]) = [\rr]$ for some parabolic $g\in\Gamma$. Then the line representing $[\rr]$ is precisely the set
\[
\{\xx\in\R^{d + 1} : g(\xx) = \xx\},
\]
which is a rational subspace of $\R^{d + 1}$. Consequently $[\rr]\in\P_\Q^d\cap M_\form$.

Conversely, suppose that $[\rr]\in\P_\Q^d\cap M_\form$. As above we fix $\zz\in\H^d\smallsetminus\bigcup_{[\rr]} H_{[\rr]}$. Since the collection $(H_{[\rr]})_{[\rr]\in \P_\Q^d\cap M_\form}$ is $\Gamma$-invariant, this implies $g(\zz)\notin H_{[\rr]}$ for all $g\in\Gamma$. In particular, $[\rr]$ cannot be a conical limit point of $\Gamma$ (see e.g. \cite[\63.2]{Bowditch_geometrical_finiteness} for the definition). But since $\Gamma$ is a lattice, every point of $\del\H^d$ is either a conical limit point or a parabolic fixed point (e.g. \cite[\64]{Bowditch_geometrical_finiteness}). Thus $[\rr]\in P_\Gamma$.
\end{proof}

\section{The correspondence principle}
\label{sectioncorrespondence}

In this section we introduce the correspondence principle alluded to in the introduction. It is an intrinsic approximation analogue of the so-called Dani Correspondence for ambient approximation \cite{Dani4, DavenportSchmidt1,DavenportSchmidt2, KleinbockMargulis2, KleinbockMargulis}. A special case can be found in \cite[Theorem 1.5]{KleinbockMerrill}.

Fix $d\geq 2$, and let $\form:\R^{d + 1}\to\R$ be a nonsingular quadratic form with integer coefficients. Suppose that 
$\Qrank\geq 1$. By Proposition \ref{propositionrenormalization}, there exists a matrix $\matrix\in\GL_{d + 1}(\Q)$ such that $\nform := \form\circ\matrix$ is $\Qrank$-normalized. 
Let $\Lambda_* = \matrix^{-1}(\Z^{d + 1})$. Note that $\Lambda_*$ is commensurable with $\Z^{d + 1}$ and that $\Lambda_*\in\Omega_\nform$. Moreover, the $\Q$-ranks of $\form$ and $\nform$ are identical, and the same goes for the $\R$-ranks, so denoting these ranks by $\Qrank$ and $\Rrank$ will not cause ambiguity.

Consider the maps $\pi_1:\orth(\nform)\to M_\form$ and $\pi_2:\orth(\nform)\to \Omega_{\nform,\Lambda_*}$ defined by
\begin{equation}
\label{pi1pi2}
\begin{split}
\pi_1(g) &= \matrix\circ g([\ee_0]), \\
\pi_2(g) &= g^{-1}\Lambda_* = (\matrix\circ g)^{-1}(\Z^{d + 1}).
\end{split}
\end{equation}
Now fix $g\in\orth(\nform)$, and let \begin{equation}
\label{gxlambda}[\xx] = \pi_1(g)\text{ and }\Lambda = \pi_2(g).
\end{equation} The first version of the correspondence principle gives a relation between the following entities:
\begin{itemize}
\item[(A)] Rational points in $\P_\Q^d\cap M_\form$ which are close to $[\xx]$.
\item[(B)] Points in $\Lambda_\prim\cap L_\nform$ which are close to $\LL_1$. Here $\Lambda_\prim$ denotes the set of primitive vectors of $\Lambda$, and $\LL_1 = \R\ee_0$ is as in \eqref{Lmdef}.
\item[(C)] Pairs $(t,\qq)$, where $\qq\in g_t\Lambda_\prim\cap L_\nform$ is close to $\{\0\}$.
\end{itemize}

\begin{lemma}[Correspondence principle, form 1]
\label{lemmacorrespondence1}
Let $g$, $[\xx]$, and $\Lambda$ be as in \eqref{gxlambda}. Then
\begin{itemize}
\item[(i)] $\pp\mapsto\matrix\circ g([\pp])$ is a bijection between $\Lambda_\prim\cap L_\nform$ and $\P_\Q^d\cap M_\form$.
\item[(ii)] Fix $\pp\in\Lambda_\prim\cap L_\nform$, and let $[\rr] = \matrix\circ g([\pp])$. Then
\begin{equation}
\label{correspondence1A}
\dist([\rr],[\xx]) \asymp_{\times,g} \frac{\dist(\pp,\LL_1)}{\|\pp\|} \text{ and } H_\std([\rr]) \asymp_{\times,g} \|\pp\|.
\end{equation}
In particular, if $\psi:(0,\infty) \to (0,\infty)$ is a regular function (cf.\ Definition \ref{definitionregular}), then
\begin{equation}
\label{correspondence1B}
\frac{\dist([\rr],[\xx])}{\psi\circ H_\std([\rr])} \asymp_{\times,g,\psi} \frac{\dist(\pp,\LL_1)}{\|\pp\|\psi(\|\pp\|)}\cdot
\end{equation}
In each case, the implied constant can be made independent of $g$ if $g$ is constrained to lie in a bounded subset of $\orth(\nform)$.
\item[(iii)] Fix $\pp\in\Lambda\cap L_\nform\smallsetminus\{\0\}$. such that $|p_0| = \|\pp\|$ (i.e.\ $|p_0| \geq \dist(\pp,\LL_1)$). For $t \geq 0$,
\begin{equation}
\label{correspondence1C}
\max\left(\dist(\pp,\LL_1) , \frac{\|\pp\|}{e^t} \right) \leq \|g_t(\pp)\| \lesssim_\times \max\left(\dist(\pp,\LL_1) , \frac{\|\pp\|}{e^t} , \frac{e^t \dist(\pp,\LL_1)^2}{\|\pp\|}\right).
\end{equation}
In particular, letting $t(\pp) = \log\big(\|\pp\|/\dist(\pp,\LL_1)\big)$ we have
\begin{equation}
\label{correspondence1D}
\|g_{t(\pp)}(\pp)\| \asymp_\times \dist(\pp,\LL_1).
\end{equation}
\end{itemize}
\end{lemma}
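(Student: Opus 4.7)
All three parts rest on a single observation: the linear map $\matrix g \colon \R^{d+1} \to \R^{d+1}$ simultaneously bijects the lattice $\Lambda = (\matrix g)^{-1}\Z^{d+1}$ onto $\Z^{d+1}$ (by construction of $\Lambda$) and the light cone $L_\nform$ onto $L_\form$ (since $\form \circ \matrix \circ g = \nform \circ g = \nform$ whenever $g \in \orth(\nform)$), and it carries $\ee_0$ to a representative of $[\xx]$. Part (i) is then immediate: a $\Z$-module isomorphism between lattices preserves primitivity, so $\matrix g$ restricts to a bijection $\Lambda_\prim \cap L_\nform \to \Z^{d+1}_\prim \cap L_\form$, which projectivizes to the asserted bijection with $\P_\Q^d \cap M_\form$.

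For (ii), the height claim is a direct consequence of (i): the vector $\matrix g \pp$ is itself the primitive integer representative of $[\rr]$, hence $H_\std([\rr]) = \|\matrix g \pp\| \asymp_{\times, g} \|\pp\|$, with implied constant controlled by the operator norms of $\matrix g$ and its inverse. For the distance comparison, note that any invertible linear transformation induces a bi-Lipschitz map on $\P_\R^d$ with respect to the standard projective metric, so $\dist([\rr],[\xx]) \asymp_{\times, g} \dist([\pp],[\ee_0])$; this last quantity is comparable, uniformly in $\pp$, to $\dist(\pp, \LL_1)/\|\pp\|$ since both agree up to constants with the sine of the angle between $\pp$ and $\ee_0$. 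The regular-function statement \eqref{correspondence1B} then drops out by applying the definition of regularity (Remark \ref{remarkregular}) to $H_\std([\rr]) \asymp_{\times, g} \|\pp\|$.

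For (iii), the decisive input is the $\Qrank$-normalized form \eqref{mnormalized} of $\nform$. Solving $\nform(\pp) = 0$ for $p_0 p_d$ expresses it as a polynomial in $p_1,\ldots,p_{d-1}$ with $O(1)$ coefficients, each of whose monomials is $O(\dist(\pp,\LL_1)^2)$; under the hypothesis $|p_0| = \|\pp\|$ this gives $|p_d| \lesssim_\times \dist(\pp,\LL_1)^2/\|\pp\|$. Since $g_t$ acts diagonally with eigenvalues $e^{-t},1,\dots,1,e^t$, the max-norm decomposes as
\[
\|g_t(\pp)\| \;=\; \max\bigl(e^{-t}|p_0|,\; |p_1|,\ldots,|p_{d-1}|,\; e^t|p_d|\bigr);
\]
plugging in $e^{-t}|p_0| = \|\pp\|/e^t$, $\max_{1 \leq i \leq d-1}|p_i| \leq \dist(\pp,\LL_1)$, and $e^t|p_d| \lesssim e^t\dist(\pp,\LL_1)^2/\|\pp\|$ yields the upper bound in \eqref{correspondence1C}. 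The lower bound is even simpler: $\|g_t(\pp)\| \geq e^{-t}|p_0| = \|\pp\|/e^t$ is obvious, while $\|g_t(\pp)\| \geq \dist(\pp,\LL_1) = \max_{1 \leq i \leq d}|p_i|$ follows because each middle coordinate contributes itself and, for $t \geq 0$, the last contributes $e^t|p_d| \geq |p_d|$. Finally \eqref{correspondence1D} is obtained by substituting $t(\pp) = \log(\|\pp\|/\dist(\pp,\LL_1))$ and noting that at this value all three quantities in the upper-bound max collapse to $\dist(\pp,\LL_1)$.

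The only genuine bookkeeping point is the $g$-dependence of the implied constants in (ii), which boils down to continuity of $g \mapsto \|\matrix g\|, \|(\matrix g)^{-1}\|$ on $\orth(\nform)$; this is what makes those constants uniform on bounded subsets of $\orth(\nform)$, as the statement asserts.
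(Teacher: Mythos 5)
Your proposal is correct and follows essentially the same route as the paper: the paper leaves (i) and (ii) to the reader (noting only that regularity of $\psi$ gives \eqref{correspondence1B} from \eqref{correspondence1A}), and your arguments for them are the intended ones, while your proof of (iii) — bounding the last coordinate by solving $\nform = 0$ for $p_0 p_d$ using the $\Qrank$-normalized form and reading off the diagonal action of $g_t$ — is the same computation the paper performs (the paper just carries it out on $\qq = g_t(\pp)$ rather than on $\pp$ itself). No gaps.
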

\begin{proof}
Part (i) is straightforward. Regarding part (ii), formula \eqref{correspondence1A} is perhaps elucidated by the calculation
\begin{align*}
\dist([\rr],[\xx]) &= \dist\big(\phi\circ g([\pp]),\phi\circ g([\ee_0])\big) \asymp_{\times,g} \dist([\pp],[\ee_0]) \asymp_\times \frac{\dist(\pp,\LL_1)}{\|\pp\|}\\
H_\std([\rr]) &= \|\phi\circ g(\pp)\| \asymp_{\times,g} \|\pp\|.
\end{align*}
Formula \eqref{correspondence1B} follows from \eqref{correspondence1A} together with the regularity of $\psi$; since $H_\std([\rr]) \asymp_{\times,g} \|\pp\|$, we have $\psi\circ H_\std([\rr]) \asymp_{\times,g} \psi(\|\pp\|)$, and \eqref{correspondence1B} follows upon combining with the first part of \eqref{correspondence1A}.

We proceed to the proof of (iii). The first inequality of \eqref{correspondence1C} is an immediate consequence of the definition of $g_t$. To demonstrate the second inequality of \eqref{correspondence1C}, let $\qq = g_t(\pp)$, and write $\qq = (q_0,\ldots,q_d)$. Then $|q_1|,\ldots,|q_{d - 1}| \leq \dist(\pp,\LL_1)$, while $|q_0| = \|\pp\|/e^t$. To bound $|q_d|$, we use the fact that $\qq \in L_\nform$, which means that
\[
\nform(\qq) = q_0 q_d + \w\nform(q_1,\ldots,q_{d - 1}),
\]
where $\w\nform$ is the remainder of $\nform$. Rearranging, we have
\[
|q_d| = \frac{|\w\nform(q_1,\ldots,q_{d - 1})|}{|q_0|} \leq \frac{\|\w\nform\|\cdot\|(q_1,\ldots,q_{d - 1})\|^2}{|q_0|} \lesssim_\times \frac{\dist(\pp,\LL_1)^2}{|q_0|} = \frac{e^t \dist(\pp,\LL_1)^2}{\|\pp\|}\cdot
\qedhere\]
\end{proof}


The second version of the correspondence principle depends on a function $\psi:(0,\infty)\to(0,\infty)$, and may be stated as follows:

\begin{lemma}[Correspondence principle, form 2]
\label{lemmacorrespondence2}
Let $g$, $[\xx]$, and $\Lambda$ be as  in \eqref{gxlambda}, and assume that $[\xx]$ is irrational (equivalently,  that $\Lambda\cap\LL_1 = \{\0\}$). Let $\psi:(0,\infty)\to(0,\infty)$ be a regular 
function such that the map $q\mapsto q\psi(q)$ is nonincreasing and tends to zero. 
Then
\begin{equation}
\label{correspondence2}
\liminf_{\substack{[\rr]\to [\xx] \\ [\rr]\in \P_\Q^d\cap M_\form}} \frac{\dist([\rr],[\xx])}{\psi\circ H_\std([\rr])} \asymp_{\times,g,\psi} \liminf_{\substack{[\pp]\to [\ee_0] \\ \pp\in\Lambda_\prim\cap L_\nform}} \frac{\dist(\pp,\LL_1)}{\|\pp\|\psi(\|\pp\|)} \asymp_{\times,\psi} \liminf_{t\to\infty} \frac{e^{-t}}{\psi\big(e^t \mindist_\nform(g_t\Lambda)\big)}\cdot
\end{equation}
\end{lemma}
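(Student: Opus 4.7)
The first asymptotic equivalence follows almost immediately from Lemma \ref{lemmacorrespondence1}. The bijection $\pp \leftrightarrow [\rr]$ from part (i) matches the two index sets; since $\matrix \circ g$ is a diffeomorphism of projective space sending $[\ee_0]$ to $[\xx]$, the condition $[\rr] \to [\xx]$ is equivalent to $[\pp] \to [\ee_0]$. The estimate \eqref{correspondence1B} from part (ii) then asserts that the two integrands are comparable up to an implied constant depending on $g$ and $\psi$ (using the regularity hypothesis), so their liminfs agree up to a constant of the same form.

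For the second equivalence, my plan is to exploit part (iii) of Lemma \ref{lemmacorrespondence1}, in particular \eqref{correspondence1D}. For the bound $B \lesssim A$: given $\pp \in \Lambda_\prim \cap L_\nform$ near $[\ee_0]$ (so $|p_0| = \|\pp\|$), I set $t_\pp = \log(\|\pp\|/\dist(\pp,\LL_1))$. Then \eqref{correspondence1D} gives $\|g_{t_\pp}(\pp)\| \asymp \dist(\pp,\LL_1)$, hence $\rho_\nform(g_{t_\pp}\Lambda) \leq \|g_{t_\pp}(\pp)\| \lesssim \dist(\pp,\LL_1)$ and therefore $e^{t_\pp}\rho_\nform(g_{t_\pp}\Lambda) \lesssim \|\pp\|$. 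Invoking monotonicity of $\psi$ together with regularity yields $\psi\bigl(e^{t_\pp}\rho_\nform(g_{t_\pp}\Lambda)\bigr) \gtrsim \psi(\|\pp\|)$, and combining with $e^{-t_\pp} = \dist(\pp,\LL_1)/\|\pp\|$ produces
\[
\frac{e^{-t_\pp}}{\psi\bigl(e^{t_\pp}\rho_\nform(g_{t_\pp}\Lambda)\bigr)} \lesssim \frac{\dist(\pp,\LL_1)}{\|\pp\|\psi(\|\pp\|)}.
\]
Since $t_\pp \to \infty$ as $[\pp] \to [\ee_0]$, taking liminfs yields $B \lesssim A$.

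For the reverse bound $A \lesssim B$, I assume $B < \infty$ (else trivial) and choose $t_n \to \infty$ realizing the liminf. Let $\pp_n$ be a primitive minimizer of $\|g_{t_n}(\cdot)\|$ on $\Lambda \cap L_\nform \setminus\{0\}$, so $\rho_\nform(g_{t_n}\Lambda) = \|g_{t_n}(\pp_n)\|$. Expanding coordinatewise via the diagonal form of $g_{t_n}$ gives $\dist(\pp_n,\LL_1) \leq \rho_\nform(g_{t_n}\Lambda)$ and (once $|p_{n,0}| = \|\pp_n\|$) $\|\pp_n\| \leq e^{t_n}\rho_\nform(g_{t_n}\Lambda)$. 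The hypothesis that $q \mapsto q\psi(q)$ is nonincreasing then gives $\|\pp_n\|\psi(\|\pp_n\|) \geq e^{t_n}\rho_\nform(g_{t_n}\Lambda)\,\psi\bigl(e^{t_n}\rho_\nform(g_{t_n}\Lambda)\bigr)$, which combined with $\dist(\pp_n,\LL_1) \leq \rho_\nform(g_{t_n}\Lambda)$ yields
\[
\frac{\dist(\pp_n,\LL_1)}{\|\pp_n\|\psi(\|\pp_n\|)} \leq \frac{e^{-t_n}}{\psi\bigl(e^{t_n}\rho_\nform(g_{t_n}\Lambda)\bigr)}.
\]
Taking liminfs would then yield $A \lesssim B$ provided $[\pp_n] \to [\ee_0]$.

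The main obstacle is verifying this last assertion, which also legitimates the structural hypothesis $|p_{n,0}| = \|\pp_n\|$. If $\|\pp_n\|$ stayed bounded along some subsequence, discreteness of $\Lambda$ would confine the $\pp_n$ to a finite set of nonzero vectors, and because $\Lambda \cap \LL_1 = \{0\}$ a coordinate analysis shows that $\rho_\nform(g_{t_n}\Lambda)$ is bounded away from zero. But $q\psi(q) \to 0$ means $\psi(q) = o(1/q)$, which would then force $e^{-t_n}/\psi\bigl(e^{t_n}\rho_\nform(g_{t_n}\Lambda)\bigr) \to \infty$, contradicting $B < \infty$. Hence $\|\pp_n\| \to \infty$. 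An analogous application of $\psi(q) = o(1/q)$ rules out $\rho_\nform(g_{t_n}\Lambda) \to \infty$, so $\rho_\nform$ remains bounded, and thus $\dist(\pp_n,\LL_1) \leq \rho_\nform(g_{t_n}\Lambda)$ is bounded while $\|\pp_n\| \to \infty$. This gives $\dist(\pp_n,\LL_1)/\|\pp_n\| \to 0$, i.e., $[\pp_n] \to [\ee_0]$, as required.
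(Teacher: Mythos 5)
Your proposal is correct and follows essentially the same route as the paper: the first asymptotic from parts (i)--(ii) of Lemma \ref{lemmacorrespondence1}, the bound $B\lesssim A$ by evaluating at $t(\pp)=\log(\|\pp\|/\dist(\pp,\LL_1))$ via \eqref{correspondence1D}, and the bound $A\lesssim B$ by taking minimizers $\pp_t$, using the first inequality of \eqref{correspondence1C} together with the monotonicity of $q\mapsto q\psi(q)$, and then invoking $\Lambda\cap\LL_1=\{\0\}$, discreteness, and $q\psi(q)\to0$ to force $[\pp_n]\to[\ee_0]$. The paper merely packages the same estimates via the substitution $\Psi(q)=q\psi(q)$, so no further comment is needed.
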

\begin{proof}
The first asymptotics follows directly from (i) and (ii) of Lemma \ref{lemmacorrespondence1}. The second asymptotics can be rewritten in a more convenient form using the function $\Psi(q) := q\psi(q)$:
\begin{equation}
\label{correspondence2A}
\liminf_{\substack{[\pp]\to [\ee_0] \\ \pp\in\Lambda_\prim\cap L_\nform}} \frac{\dist(\pp,\LL_1)}{\Psi(\|\pp\|)} \asymp_\times \liminf_{t\to\infty} \frac{\mindist_\nform(g_t\Lambda)}{\Psi\big(e^t \mindist_\nform(g_t\Lambda)\big)}\cdot
\end{equation}
To demonstrate the $\lesssim$ direction of \eqref{correspondence2A}, for each $t\geq 0$ choose $\pp_t\in\Lambda_\prim\cap L_\nform$ such that $\mindist_\nform(g_t\Lambda) = \|g_t(\pp_t)\|$. Then by \eqref{correspondence1C}, we have
\[
\dist(\pp_t,\LL_1) \leq \mindist_\nform(g_t\Lambda) \text{ and } \|\pp_t\| \leq e^t \mindist_\nform(g_t\Lambda)
\]
and thus
\begin{equation}
\label{intermediate}
\frac{\dist(\pp_t,\LL_1)}{\Psi(\|\pp_t\|)} \leq \frac{\mindist_\nform(g_t\Lambda)}{\Psi\big(e^t \mindist_\nform(g_t\Lambda)\big)}
\end{equation}
Here we have used the fact that the function $\Psi$ is nonincreasing. Next, suppose we have a sequence $t_k\to\infty$ such that $\lim_{k\to\infty}\frac{e^{-t_k}}{\psi\big(e^{t_k} \mindist_\nform(g_t\Lambda)\big)} < \infty$. Since $\Psi(q) \to 0$ as $q\to\infty$, it follows that $\mindist_\nform(g_{t_k}\Lambda) \to 0$. In particular
\[
\dist(\pp_{t_k},\LL_1) \to 0.
\]
Since $\Lambda\cap\LL_1 = \{\0\}$, this implies that the set $\{\pp_{t_k} : k\in\N\}$ is infinite. Combining with \eqref{intermediate} yields the $\lesssim$ direction of \eqref{correspondence2A}.

To demonstrate the $\gtrsim$ direction of \eqref{correspondence2A}, suppose that $\pp_k\in\Lambda_\prim\cap L_\nform$ is a sequence such that $[\pp_k]\to [\ee_0]$. For each $k$, let $t_k = t(\pp_k)$ be defined as in (iii) of Lemma \ref{lemmacorrespondence1}. Since $[\pp_k]\to [\ee_0]$, we have $t_k\to \infty$. On the other hand, by \eqref{correspondence1D} we have
\begin{align*}
\mindist_\nform(g_{t_k}\Lambda) &\leq_\pt  \|g_{t_k}(\pp_k)\| \asymp_\times \dist(\pp_k,\LL_1)\\
e^{t_k}\mindist_\nform(g_{t_k}\Lambda) &\lesssim_\times e^{t_k}\dist(\pp_k,\LL_1) = \|\pp_k\|
\end{align*}
and so
\[
\frac{\mindist_\nform(g_{t_k}\Lambda)}{\Psi\big(e^t \mindist_\nform(g_{t_k}\Lambda)\big)} \lesssim_\times \frac{\dist(\pp_k,\LL_1)}{\Psi(\|\pp_k\|)}\cdot
\]
Letting $k\to\infty$ finishes the proof.
\end{proof}


The following corollary is a direct analogue of Dani's correspondence between bounded orbits and badly approximable vectors/matrices \cite[Theorem 2.20]{Dani4}. 
\begin{corollary}
\label{corollaryBAboundedorbits}
Let $g$, $[\xx]$, and $\Lambda$ be as  in \eqref{gxlambda}. Then the following are equivalent:
\begin{itemize}
\item[(A)] $[\xx]$ is intrinsically badly approximable, i.e.
\[
[\xx]\in\BA_{M_\form}.
\]
\item[(B)]
\[
\inf_{\pp\in\Lambda\cap L_\form\smallsetminus\{\0\}} \dist(\pp,\LL_1) > 0.
\]
\item[(C)] The orbit
\[
(g_t\Lambda)_{t\geq 0}
\]
is bounded in $\Omega_\nform$.
\end{itemize}
\end{corollary}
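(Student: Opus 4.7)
I would prove the chain (A) $\iff$ (B) $\iff$ (C). The equivalence (A) $\iff$ (B) will be a direct reading of parts (i) and (ii) of the correspondence principle (Lemma \ref{lemmacorrespondence1}) in the special case $\psi=\psi_1$, while (B) $\iff$ (C) combines Observation \ref{observationgt}, part (iii) of Lemma \ref{lemmacorrespondence1}, and Mahler's criterion for $\Omega_\nform$ (Corollary \ref{theoremmahlerquadratic}). Throughout I read the $L_\form$ appearing in (B) as $L_\nform$, which is the natural object of the correspondence (the two are conjugate via $\matrix g$ and both sides of the equivalence are preserved by the conjugation).

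For (A) $\iff$ (B), the plan is to rewrite the bad-approximability inequality $\dist([\rr],[\xx])\ge \epsilon\psi_1(H_\std([\rr]))$ using the bijection $\pp\mapsto \matrix\circ g([\pp])$ from $\Lambda_\prim\cap L_\nform$ onto $\P_\Q^d\cap M_\form$. The scalings $\dist([\rr],[\xx])\asymp \dist(\pp,\LL_1)/\|\pp\|$ and $H_\std([\rr])\asymp \|\pp\|$ from \eqref{correspondence1A} collapse the two factors of $\|\pp\|$ and leave the clean statement $\dist(\pp,\LL_1)\gtrsim \epsilon$; this identifies (A) with positivity of the infimum of $\dist(\cdot,\LL_1)$ over $\Lambda_\prim\cap L_\nform$. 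To match this with (B) I would then pass from the primitive infimum to the infimum over all nonzero $\pp\in\Lambda\cap L_\nform$ by writing any such $\pp$ as $k\pp'$ with $\pp'\in\Lambda_\prim$ and $k\in\Z_{>0}$ (factor out the gcd after applying $\matrix g$), which only makes $\dist(\pp,\LL_1)=k\,\dist(\pp',\LL_1)$ larger, so the two infima agree.

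For (B) $\Rightarrow$ (C), Observation \ref{observationgt} bounds $\mindist_\nform(g_t\Lambda)$ below by the infimum in (B), uniformly in $t\ge 0$, so Corollary \ref{theoremmahlerquadratic} keeps the orbit precompact. For the converse (C) $\Rightarrow$ (B) I plan a contrapositive argument with a case split on a witness sequence $\pp_k\in\Lambda\cap L_\nform\setminus\{\0\}$ satisfying $\dist(\pp_k,\LL_1)\to 0$: if $(\pp_k)$ is bounded, lattice discreteness yields a nonzero $\pp_0\in\Lambda\cap\LL_1\cap L_\nform$ whose orbit $g_t(\pp_0)=e^{-t}\pp_0$ collapses to $\0$; if $\|\pp_k\|\to\infty$, the inequality $\dist(\pp_k,\LL_1)<\|\pp_k\|$ eventually forces $|p_{k,0}|=\|\pp_k\|$ so that part (iii) of Lemma \ref{lemmacorrespondence1} applies at $t_k:=\log(\|\pp_k\|/\dist(\pp_k,\LL_1))\to\infty$, and \eqref{correspondence1D} gives $\|g_{t_k}(\pp_k)\|\asymp_\times \dist(\pp_k,\LL_1)\to 0$. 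In either case $\mindist_\nform$ is driven to zero along the orbit, so Mahler says the orbit is unbounded, contradicting (C).

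The only real obstacle is this case split in (C) $\Rightarrow$ (B): the bounded case is what absorbs the possibility that $[\xx]$ is already rational (so (B) fails because of a witness literally lying on $\LL_1$), while the $\|\pp_k\|\to\infty$ case is the genuinely dynamical one and is precisely the situation that part (iii) of Lemma \ref{lemmacorrespondence1} was designed to handle. Once both cases are treated, the equivalence falls out cleanly from Mahler's criterion.
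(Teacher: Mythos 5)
Your proof is correct, but it takes a genuinely different (and in some ways cleaner) route than the paper's. The paper derives all three equivalences in one stroke from form 2 of the correspondence principle (Lemma \ref{lemmacorrespondence2}): since that lemma excludes $\psi_1$ (it requires $q\psi(q)\to 0$), the paper characterizes each of (A), (B), (C) as positivity of the corresponding term of \eqref{correspondence2} \emph{for all} $\psi$ in the admissible class $\CC$, with (C) handled by Corollary \ref{theoremmahlerquadratic}. You instead bypass Lemma \ref{lemmacorrespondence2} entirely and work with form 1: the scaling identities \eqref{correspondence1A} let you treat $\psi_1$ directly and give (A) $\iff$ (B) (your reduction from the primitive infimum to the full one via $\dist(k\pp',\LL_1)=k\dist(\pp',\LL_1)$ is right, as is reading $L_\form$ as $L_\nform$ in (B)), while Observation \ref{observationgt}, \eqref{correspondence1D}, and Mahler give (B) $\iff$ (C), with your case split on bounded versus escaping witness sequences correctly absorbing the degenerate situation $\Lambda\cap\LL_1\neq\{\0\}$. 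What each approach buys: the paper's is a one-line corollary once Lemma \ref{lemmacorrespondence2} is in hand, but it leans on the unspelled-out fact that positivity for every $\psi\in\CC$ recovers positivity for $\psi_1$; yours is longer but self-contained at the level of Lemma \ref{lemmacorrespondence1}, and it essentially realizes the simplification the authors themselves wish for in the remark immediately following the corollary, where they lament that $\psi_1$ is ruled out of Lemma \ref{lemmacorrespondence2}.
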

\begin{proof}
Clearly all the above statements are false if $[\xx]$ is irrational. Otherwise, let $\CC$ be the class of all regular 
functions $\psi$ such that the map $q\mapsto q\psi(q)$ is nonincreasing and tends to zero. Then (A) is equivalent to the assertion that the left hand side of \eqref{correspondence2} is positive for all $\psi\in\CC$, (B) is equivalent to the assertion that the middle of \eqref{correspondence2} is positive for all $\psi\in\CC$, and (C) is equivalent (by Corollary \ref{corollarymahlerquadratic}) to the assertion that the right hand side of \eqref{correspondence2} is positive for all $\psi\in\CC$.
\end{proof}

\begin{remark}
It is somewhat annoying that Lemma \ref{lemmacorrespondence2} requires the assumption that $q\psi(q)\to 0$ as $q\to\infty$, so that the Dirichlet function $\psi = \psi_1$ is ruled out. (If we were allowed to use $\psi = \psi_1$, then the proof of Corollary \ref{corollaryBAboundedorbits} could be made even simpler -- just consider $\psi = \psi_1$ rather than all functions $\psi\in\CC$.) However, this assumption is necessary, as can be seen as follows. Arguing as in \cite[Proof of Corollary 3.5]{KleinbockMerrill}, one can show that 
there exists $C > 0$ such that $\mindist_\nform(\Lambda) \leq C$ for all $\Lambda\in\Omega_{\nform,\Lambda_*}$. 
(Indeed, otherwise one can take a sequence $\Lambda_n\in\Omega_{\nform,\Lambda_*}$ with $\mindist_\nform(\Lambda_n)\to\infty$; such a sequence cannot have a convergent subsequence, yet it is precompact in view of Corollary \ref{corollarymahlerquadratic}.)
This $C$ is a uniform upper bound on the right hand side of \eqref{correspondence2} when $\psi = \psi_1$. However, we know that when $\Qrank \neq \Rrank$, then there is no uniform upper bound on the left hand side of \eqref{correspondence2}; this follows from Theorem \ref{theoremdirichletquadratic}(ii) below. Thus the left and right hand sides cannot be asymptotic.\Footnote{A closer analysis shows that when $\psi = \psi_1$, the left and right hand sides of \eqref{correspondence2} are not necessarily asymptotic even when both of them are close to $0$.}
\end{remark}

Using Corollary \ref{corollaryBAboundedorbits} we can now prove Theorem \ref{theoremBAquadratic}.
\begin{theorem}
\label{theoremBAquadratic}
Let $M_\form\subset\P_\R^d$ be a nonsingular rational quadric hypersurface. Then $\HD(\BA_{M_\form}) = \HD(M_\form)$. In particular, the Dirichlet function $\psi_1$ is optimal.
\end{theorem}

\begin{proof} First observe that $\BA_{M_\form} =  M_\form$ if $\Qrank = 0$; thus it suffices to consider the case $\Qrank \ge 1$.
Let $\BA_{\Omega_\nform}\subset\Omega_\nform$ denote the set of lattices in $\Omega_{\nform}$ whose orbit under the $g_t$ flow is bounded. By \cite[Theorem 5.2]{KleinbockMargulis3}, we have $\HD(\BA_{\Omega_\nform}) = \HD(\Omega_\nform)$. On the other hand, by Corollary \ref{corollaryBAboundedorbits}, we have $\BB := \pi_2^{-1}(\BA_{\Omega_\nform}) = \pi_1^{-1}(\BA_{M_\form})$.

Since $\pi_2$ is a fibration whose fibers are isomorphic to $\Stab(\Lambda_*)$, the set $\BB = \pi_2^{-1}(\BA_{\Omega_\nform}) \subset \orth(\nform)$ has the same local structure as the product $\BA_{\Omega_\nform} \times \Stab(\Lambda_*) \subset \Omega_\nform \times \Stab(\Lambda_*)$. Now, since $\Stab(\Lambda_*)$ is a manifold, its Hausdorff dimension and upper box dimension are equal. (We refer to \cite[p.38]{Falconer_book} for the definition of upper box dimension.) So by \cite[Corollary 7.4]{Falconer_book}, we have $\HD\big(A\times\Stab(\Lambda_*)\big) = \HD(A) + \dim\big(\Stab(\Lambda_*)\big)$ for all $A\subset \Omega_\nform$. Taking the cases $A = \BA_{\Omega_\nform}$ and $A = \Omega_\nform$ and using the fact that Hausdorff dimension is a local property, we have
\begin{align*}
\HD(\BB) &= \HD\big(\BA_{\Omega_\nform}\big) + \dim(\Stab(\Lambda_*)\big),& \HD\big(\orth(\nform)\big) &= \HD(\Omega_\nform) + \dim\big(\Stab(\Lambda_*)\big).
\end{align*}
A similar argument gives
\begin{align*}
\HD(\BB) &= \HD\big(\BA_{M_\form}\big) + \dim\big(\Stab([\ee_0])\big),& \HD\big(\orth(\nform)\big) &= \HD(M_\form) + \dim\big(\Stab([\ee_0])\big).
\end{align*}
Thus since $\HD(\BA_{\Omega_\nform}) = \HD(\Omega_\nform)$, we have $\HD(\BA_{M_\form}) = \HD(M_\form)$.
\end{proof}



Under the assumption that $q\psi(q)\to 0$ as $q\to\infty$, Lemma \ref{lemmacorrespondence2} can be used to dynamically describe the sets $\A_{M_\form}(\psi)$ and $\WA_{M_\form}(\psi)$:
\begin{corollary}
\label{corollarycorrespondencesets}
Let $\psi:(0,\infty)\to(0,\infty)$ be a regular continuous function such that the map $q\mapsto q\psi(q)$ is nonincreasing and tends to zero, let
\[
r_\psi(t) := e^{-t} \psi^{-1}(e^{-t})
\]
(this is well defined for large enough $t$),
and let
\begin{equation}
\label{defarpsi}
\A(r_\psi,\Omega_{\nform,\Lambda_*}) := \{\Lambda\in\Omega_{\nform,\Lambda_*} : \mindist_\nform(g_t\Lambda) \leq r_\psi(t) \text{ for an unbounded set of  $t\geq 0$}\}.
\end{equation}
Then for every compact set $\KK\subset\orth(\nform)$, there exists $C > 0$ (depending on $\psi$ and $\KK$) such that
\begin{equation}
\label{correspondence4}
\pi_1^{-1}\big(\A_{M_\form}(\psi/C)\big)\cap \KK \subset \pi_2^{-1}\big(\A(r_\psi,\Omega_{\nform,\Lambda_*})\big)\cap \KK \subset \pi_1^{-1}\big(\A_{M_\form}(C\psi)\big)\cap \KK.
\end{equation}
Consequently,  if $g$, $[\xx]$, and $\Lambda$  are as  in \eqref{gxlambda}, then  
$[\xx] \in\WA_{M_\form}(\psi) \smallsetminus (\P^d_\Q \cap M_\form)$ if and only if
\[
\Lambda \in \WA(r_\psi,\Omega_{\nform,\Lambda_*}):= \bigcap_{\epsilon > 0}\A(\epsilon r_\psi,\Omega_{\nform,\Lambda_*})
\]
\end{corollary}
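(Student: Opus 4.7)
The plan is to establish the corollary as a pointwise, quantitative version of Lemma~\ref{lemmacorrespondence2}, by applying Lemma~\ref{lemmacorrespondence1}(i)--(iii) to individual lattice vectors and rational points rather than in the asymptotic regime.

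As a preliminary observation, the hypothesis that $\Psi(q) := q\psi(q)$ is nonincreasing forces $\psi$ to be nonincreasing, so $\psi^{-1}$ is well-defined and decreasing; moreover, for every $\alpha \geq 1$, $\Psi(\alpha q) \leq \Psi(q)$, equivalently $\psi(\alpha q) \leq \psi(q)/\alpha$. Combined with the regularity of $\psi$, this yields, for any fixed $\lambda \geq 1$, a constant $C_\lambda = C_\lambda(\psi)$ with $\Psi(\lambda q)\geq \lambda \Psi(q)/C_\lambda$. In particular, at $\lambda = K_0$, the absolute constant of Lemma~\ref{lemmacorrespondence1}(iii), we obtain $\Psi(K_0 q)\geq K_0\Psi(q)/C_{K_0}$.

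For the middle-to-right inclusion, fix $g\in\KK$, let $[\xx]=\pi_1(g)$ and $\Lambda=\pi_2(g)$, and suppose $\rho_\nform(g_{t_n}\Lambda)\leq r_\psi(t_n)$ for a sequence $t_n\to\infty$. Choose a primitive $\pp_n\in\Lambda\cap L_\nform\setminus\{\0\}$ realizing $\rho_\nform(g_{t_n}\Lambda)$. The two lower bounds of Lemma~\ref{lemmacorrespondence1}(iii) give $\dist(\pp_n,\LL_1)\leq r_\psi(t_n)$ and $e^{-t_n}\|\pp_n\|\leq r_\psi(t_n)$. Setting $q_n := \psi^{-1}(e^{-t_n})$ so $r_\psi(t_n)=\Psi(q_n)$, these read $\|\pp_n\|\leq q_n$ and, by monotonicity of $\Psi$, $\dist(\pp_n,\LL_1)\leq \Psi(q_n)\leq \Psi(\|\pp_n\|)$. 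Lemma~\ref{lemmacorrespondence1}(i,ii) and the regularity of $\psi$ then translate this into $\dist([\rr_n],[\xx])\leq C\psi(H_\std([\rr_n]))$ for the associated rational points $[\rr_n]=\matrix\circ g([\pp_n])$, with $C=C(\KK,\psi)$ uniform for $g\in\KK$.

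For the left-to-middle inclusion, assume $[\xx]\in\A_{M_\form}(\psi/C)$. Lemma~\ref{lemmacorrespondence1}(i,ii) produces infinitely many primitive $\pp\in\Lambda\cap L_\nform$ with $\dist(\pp,\LL_1)\leq \epsilon\Psi(\|\pp\|)$, where $\epsilon\to 0$ as $C\to\infty$. Choose $t_\pp$ by $\psi^{-1}(e^{-t_\pp})=K_0\|\pp\|$, so $r_\psi(t_\pp)=\Psi(K_0\|\pp\|)$; the middle term of the upper bound of Lemma~\ref{lemmacorrespondence1}(iii) becomes exactly
\[
e^{-t_\pp}\|\pp\| \;=\; \psi(K_0\|\pp\|)\|\pp\| \;=\; \Psi(K_0\|\pp\|)/K_0.
\]
The other two terms are bounded by $\epsilon\Psi(\|\pp\|)$ and $K_0\epsilon^2\Psi(\|\pp\|)^2/\Psi(K_0\|\pp\|)$ respectively; both are $\leq \Psi(K_0\|\pp\|)/K_0$ once $\epsilon\leq 1/C_{K_0}$, by the preliminary lower bound $\Psi(K_0\|\pp\|)\geq K_0\Psi(\|\pp\|)/C_{K_0}$. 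Thus the middle term dominates, and multiplying by the prefactor $K_0$ yields $\|g_{t_\pp}(\pp)\|\leq \Psi(K_0\|\pp\|)=r_\psi(t_\pp)$, whence $\rho_\nform(g_{t_\pp}\Lambda)\leq r_\psi(t_\pp)$. Since $\|\pp\|\to\infty$ along the approximating sequence, $t_\pp\to\infty$, so $\Lambda\in\A(r_\psi,\Omega_{\nform,\Lambda_*})$.

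The $\WA$-statement follows by running the same arguments with $\psi$ replaced by $\epsilon'\psi$ for each $\epsilon'>0$: for the forward direction, use the tuned scaling $\psi^{-1}(e^{-t_\pp})=\lambda\|\pp\|$ with $\lambda=K_0/\epsilon'$ to obtain the sharper bound $\rho_\nform(g_{t_\pp}\Lambda)\leq \epsilon'r_\psi(t_\pp)$ once $\epsilon$ is small enough in terms of $C_\lambda$, and the reverse direction is analogous. The main obstacle is the sharp absorption of the absolute constant $K_0$ of Lemma~\ref{lemmacorrespondence1}(iii) in the left-to-middle inclusion; this is resolved by the tuned choice of $t_\pp$, designed precisely so that the middle term of Lemma~\ref{lemmacorrespondence1}(iii) equals $r_\psi(t_\pp)/K_0$ and the prefactor $K_0$ cancels exactly.
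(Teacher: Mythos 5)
Your proposal is correct, but it takes a genuinely different route from the paper. The paper's own proof is a three-line deduction from Lemma \ref{lemmacorrespondence2}: writing $C([\xx])$ and $C(\Lambda)$ for the two outer quantities in \eqref{correspondence2}, it observes that $C([\xx])<\alpha\Rightarrow[\xx]\in\A_{M_\form}(\alpha\psi)\Rightarrow C([\xx])\leq\alpha$ and $C(\Lambda)<1\Rightarrow\Lambda\in\A(r_\psi,\Omega_{\nform,\Lambda_*})\Rightarrow C(\Lambda)\leq 1$, so the uniform (over $g\in\KK$) two-sided asymptotic of \eqref{correspondence2} yields \eqref{correspondence4}, and the $\WA$ statement follows from the computation $r_{\epsilon\psi}(t)=e^{-t}\psi^{-1}(e^{-t}/\epsilon)$ together with regularity. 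You bypass Lemma \ref{lemmacorrespondence2} entirely and re-run the vector-level estimates of Lemma \ref{lemmacorrespondence1} pointwise, in effect re-proving the quantitative content of \eqref{correspondence2A} with explicit constants; your tuned time $t_\pp$, defined by $\psi^{-1}(e^{-t_\pp})=K_0\|\pp\|$ so that the middle term of \eqref{correspondence1C} is exactly $r_\psi(t_\pp)/K_0$, differs from the paper's choice $t(\pp)=\log\big(\|\pp\|/\dist(\pp,\LL_1)\big)$ but works just as well, and it lets you obtain the $\WA$ statement by retuning $\lambda$ rather than by the substitution $\psi\mapsto\epsilon\psi$. The paper's route buys brevity by leveraging an already-proved lemma; yours buys explicit constants and avoids the liminf formalism. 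Two small points you should still record: (a) the upper bound of \eqref{correspondence1C} requires $|p_0|=\|\pp\|$, which holds for all but finitely many of your approximating vectors since $\dist(\pp,\LL_1)\leq\epsilon\|\pp\|\psi(\|\pp\|)<\|\pp\|$ once $\|\pp\|$ is large; and (b) in the middle-to-right inclusion you need infinitely many distinct $[\rr_n]$, which holds for irrational $[\xx]$ because a single $\pp$ with $\|g_{t_n}(\pp)\|\leq r_\psi(t_n)\to 0$ along $t_n\to\infty$ would force $\pp\in\LL_1$; rational $[\xx]$ is the same measure-zero caveat that the paper's proof inherits from the irrationality hypothesis of Lemma \ref{lemmacorrespondence2}.
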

\begin{proof}
Given $g\in\orth(\nform)$ and $[\xx],\Lambda$ as in \eqref{gxlambda}, write $C([\xx])$ for the left hand side of \eqref{correspondence2} and write $C(\Lambda)$ for the right hand side of \eqref{correspondence2}. Then
\[
C([\xx]) < \alpha \;\;\Rightarrow\;\; [\xx]\in \A_{M_\form}(\alpha\psi) \;\;\Rightarrow\;\; C([\xx]) \leq \alpha
\]
and
\[
C(\Lambda) < 1 \;\;\Rightarrow\;\; \Lambda\in \A(r_\psi,\Omega_{\nform,\Lambda_*}) \;\;\Rightarrow\;\; C(\Lambda) \leq 1.
\]
The conclusion follows. The ``consequently" part follows from the regularity of $\psi$ and the elementary computation $r_{\epsilon\psi}(t)  = e^{-t} \psi^{-1}(e^{-t}/\epsilon)$.
\end{proof}

In applying the correspondence principle, 
the following observations happen to be useful:

\begin{observation}
\label{observationcompact}
There exists a compact set $\KK\subset\orth(\nform)$ such that $\pi_1(\KK) = M_\form$.
\end{observation}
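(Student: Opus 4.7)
The plan is to exhibit a compact subgroup $\KK\subset\orth(\nform)$ which already acts transitively on $M_\nform = [L_\nform]$. Since $\pi_1(g) = \matrix\circ g([\ee_0])$ and $\matrix(M_\nform) = M_\form$ (because $L_\nform = \matrix^{-1}(L_\form)$), transitivity of $\KK$ on $M_\nform$ will yield $\pi_1(\KK) = \matrix(\KK\cdot[\ee_0]) = \matrix(M_\nform) = M_\form$, as required. To set up, choose $A\in\GL_{d+1}(\R)$ so that $\nform' := \nform\circ A$ takes the standard diagonal form $x_1^2+\cdots+x_p^2 - x_{p+1}^2-\cdots-x_{p+q}^2$, where $(p,q)$ is the real signature of $\nform$. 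The standing assumption \eqref{haverationalpoints} forces $\Qrank\geq 1$ and hence $\Rrank\geq 1$; combined with the nonsingularity of $\nform$, this gives $p,q\geq 1$, so $\nform'$ is indefinite.

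Next I would show that the compact subgroup $K_0 := \orth(p)\times\orth(q)$ (embedded block-diagonally in $\orth(\nform')$) acts transitively on $M_{\nform'}$. Any nonzero $\vv\in L_{\nform'}$ splits as $\vv = (\vv_+,\vv_-)\in\R^p\oplus\R^q$ with $\|\vv_+\| = \|\vv_-\| > 0$; after rescaling, I may assume both norms equal $1/\sqrt{2}$, and two such normalized representatives of the same projective class differ only by a common sign. Since $\orth(p)$ and $\orth(q)$ act transitively on the spheres of radius $1/\sqrt{2}$ in $\R^p$ and $\R^q$ respectively, $K_0$ acts transitively on the set of these normalized representatives, hence on $M_{\nform'}$.

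Finally, conjugating by $A$, the group $\KK := A K_0 A^{-1}\subset\orth(\nform)$ is compact (as the continuous image of the compact group $K_0$), and it acts transitively on $M_\nform = A(M_{\nform'})$ because $A$ intertwines the two actions. I do not foresee any serious obstacle; the essential point is that transitivity on the projectivized null cone is already witnessed by a maximal compact subgroup of $\orth(\nform)$, which bypasses any need to construct compact sections via local trivializations of the orbit fibration $\orth(\nform)\to M_\nform$.
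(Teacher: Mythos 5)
Your proof is correct, but it takes a genuinely different route from the paper's. The paper disposes of this observation with a soft topological argument: $\pi_1$ is open and surjective, $\orth(\nform)$ is locally compact, and $M_\form$ is compact, so the images of relatively compact open sets form an open cover of $M_\form$ admitting a finite subcover, and the union of the corresponding closures is the desired $\KK$. You instead produce $\KK$ explicitly as a conjugate $A\,(\orth(p)\times\orth(q))\,A^{-1}$ of a maximal compact subgroup, and verify by hand that it acts transitively on the projectivized light cone $M_\nform$ (using $\Qrank\geq 1\Rightarrow\Rrank\geq 1\Rightarrow p,q\geq 1$, and the transitivity of $\orth(p)\times\orth(q)$ on the product of spheres cut out by $\|\vv_+\|_2=\|\vv_-\|_2$). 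Your version buys more: $\KK$ is a compact \emph{subgroup}, and the statement $\KK\cdot[\ee_0]=M_\nform$ is essentially the decomposition $G=K\cdot\Stab([\ee_0])$ with the stabilizer of an isotropic line a parabolic subgroup; it also sidesteps the need to justify that $\pi_1$ is open (which in the paper's route rests on the openness of orbit maps for transitive actions of locally compact groups). What it costs is generality and length: the paper's one-liner uses nothing about the structure of $\orth(\nform)$ beyond surjectivity of $\pi_1$ (Witt's theorem), and would apply verbatim to any open surjection from a locally compact group onto a compact homogeneous space. One cosmetic point: the paper reserves $\|\cdot\|$ for the max norm, whereas your transitivity argument needs the Euclidean norm on the two blocks; you should make that explicit, though it does not affect the argument.
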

\begin{proof}
This follows from the facts that $M_\form$ is compact, $\orth(\nform)$ is locally compact, and $\pi_1$ is open and surjective.
\end{proof}

We remark that the corresponding assertion is not true for $\pi_2$, since $\Omega_{\nform,\Lambda_*}$ is not compact by Corollary \ref{corollarymahlerquadratic}.

Now let $\mu_\nform$ and $\mu_{\nform,\Lambda_*}$ denote   Haar measures on $\orth(\nform)$ and $\Omega_{\nform,\Lambda_*}$, respectively.

\begin{observation}
\label{observationlebesgue}
The measures\Footnote{Note that the measures $\pi_1[\mu_\nform]$ and $\pi_2[\mu_\nform]$ are not $\sigma$-finite; in fact, they are $\{0,\infty\}$-valued.} $\lambda_{M_\form}$ and $\pi_1[\mu_\nform]$ are mutually absolutely continuous. The measures $\mu_{\nform,\Lambda_*}$ and $\pi_2[\mu_\nform]$ are mutually absolutely continuous.
\end{observation}

We remark that Corollary \ref{corollaryBAboundedorbits}, the ergodicity of the $g_t$-action on $\Omega_{R,\Lambda_*}$, and the above observation allow one to conclude that the set $\BA_{M_\form}$ is $\lambda_{M_\form}$-null. This is a special case of a more general Khintchine-type result -- namely  Theorem \ref{theoremkhinchinquadratic}.


\section{A Dirichlet-type theorem}
\label{sectionproofdirichlet}

In this section we prove the following:
\begin{theorem}[Dirichlet-type theorem for quadric hypersurfaces]
\label{theoremdirichletquadratic}
Fix $d\geq 2$, and let $M_\form$ be a nonsingular rational quadric hypersurface in $\P_\R^d$ 
with $\Qrank \ge 1$. Then:
\begin{itemize}
\item[(i)] $\psi_1$ is Dirichlet for intrinsic approximation on $M_\form$.
\item[(ii)] $\psi_1$ is uniformly Dirichlet if and only if $\Qrank = \Rrank$.
\item[(iii)] The following are equivalent:
\begin{itemize}
\item[(A)] $\Qrank = \Rrank = 1$.
\item[(B)] There exist $C,\numberq_0 > 0$ such that for all $[\xx]\in M_\form$ and for all $\numberq\geq\numberq_0$ there exists $[\rr]\in \P_\Q^d\cap M_\form$ such that
\begin{equation}
\label{strongdirichlet}
H_\std([\rr]) \leq \numberq\text{ and }\dist([\rr],[\xx]) \leq \frac{C}{\sqrt{H_\std([\rr]) \numberq}}\cdot
\end{equation}
\item[(C)] The set
\[
\{[\xx]\in M_\form : \exists\, C,\numberq_0 > 0 \all\, \numberq\geq\numberq_0 \;\; \exists\, [\rr]\in \P_\Q^d\cap M_\form \text{ satisfying 
\eqref{strongdirichlet}}\}
\]
has positive $\lambda_{M_\form}$-measure.
\end{itemize}
\end{itemize}
\end{theorem}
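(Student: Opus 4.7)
\emph{Plan of proof.} All three parts would be approached through the Correspondence Principle of Section \ref{sectioncorrespondence}. Fix $\matrix\in\GL_{d+1}(\Q)$ putting $\form$ in $\Qrank$-normalized form $\nform = \form\circ\matrix$, and set $\Lambda_* = \matrix^{-1}(\Z^{d+1})$. For $g\in\orth(\nform)$, write $[\xx] = \pi_1(g)$ and $\Lambda = \pi_2(g)$. By Lemma \ref{lemmacorrespondence1}, intrinsic rational approximations $[\rr]$ of $[\xx]$ correspond bijectively to primitive isotropic lattice vectors $\pp\in\Lambda_{\prim}\cap L_\nform$, with $\dist([\rr],[\xx])\asymp_{\times,g}\dist(\pp,\LL_1)/\|\pp\|$ and $H_\std([\rr])\asymp_{\times,g}\|\pp\|$, the implied constants uniform when $g$ ranges over a bounded subset of $\orth(\nform)$. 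Thus finding $\psi_1$-approximants of $[\xx]$ reduces via Observation \ref{observationgt} to producing $\pp\in\Lambda\cap L_\nform\butnot\{\0\}$ with $\|g_t(\pp)\|$ bounded for suitable $t\geq 0$.

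For (i), I would handle the rational case trivially and assume $[\xx]$ is irrational, so $\Lambda\cap\LL_1 = \{\0\}$. Since $\orth(\nform;\Lambda_*)$ is a lattice in $\orth(\nform)$ by Borel--Harish-Chandra, $\Omega_{\nform,\Lambda_*}$ has finite volume, and reduction theory implies $\rho_\nform$ decreases in every cusp direction and so is uniformly bounded above by some $C$ on the whole space. For each $t\geq 0$ pick $\pp_t\in\Lambda\cap L_\nform\butnot\{\0\}$ with $\|g_t(\pp_t)\| = \rho_\nform(g_t\Lambda)\leq C$; Observation \ref{observationgt} yields $\dist(\pp_t,\LL_1)\leq C$. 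Since $\|g_t(\pp)\|\to\infty$ for every fixed $\pp\in\Lambda\butnot\LL_1$, no single $\pp$ can serve as $\pp_t$ for arbitrarily large $t$, so $\|\pp_{t_n}\|\to\infty$ along some $t_n\to\infty$. The bound $|p_{t,d}|\lesssim e^{-t}$ coming from $\pp\in L_\nform$ then forces $|p_{t,0}|$ to dominate $\|\pp_t\|$, so $[\pp_{t_n}]\to[\ee_0]$ and hence $[\rr_n]\to[\xx]$ with $\dist([\rr_n],[\xx])\lesssim_{\times,g} 1/H_\std([\rr_n])$.

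For (ii), the ``if'' direction would be obtained by combining the argument of (i) with Observation \ref{observationcompact}: choose a compact $\KK\subset\orth(\nform)$ with $\pi_1(\KK) = M_\form$, making the correspondence constants uniform on $\KK$; when $\Qrank = \Rrank$, Proposition \ref{propositionrenormalization} performs the $\Rrank$-normalization over $\Q$, so every cusp of $\Omega_{\nform,\Lambda_*}$ arises from a rational totally isotropic subspace, giving a clean uniform upper bound on $\rho_\nform(g_t\Lambda)$ that is independent of the starting point $\Lambda\in\pi_2(\KK)$. The converse requires showing that when $\Qrank<\Rrank$ the $\Q$-anisotropic remainder $\w\nform$ of the $\Qrank$-normalization produces real isotropic lines in $L_\nform$ not contained in any rational totally isotropic subspace; taking $[\xx]$ approaching such a real-only isotropic direction and tracking the behavior of $g_t\Lambda$ near the corresponding ``irrational cusp'' forces $C_{[\xx]}$ in \eqref{dirichlet} to blow up, obstructing uniformity.

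For (iii), (B)$\Rightarrow$(C) is immediate. The implication (A)$\Rightarrow$(B) would use the hyperbolic interpretation of \6\ref{subsectionkleinian}: when $\Qrank = \Rrank = 1$, $M_\form = \del\H^d$ and $\P_\Q^d\cap M_\form = P_\Gamma$ with $\Gamma = \orth(\form;\Z)$, and the geodesic ray from a fixed $\zz\in\H^d$ toward $[\xx]$ must enter some horoball $H_{[\rr]}$ by time $t = \log\numberq$ (the horoballs being disjoint but having union whose complement in $\H^d$ has bounded geometry); the height asymptotic \eqref{heightasymp} converts the geometry of this entry event into the strong bound \eqref{strongdirichlet}. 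For (C)$\Rightarrow$(A), I would translate the strong Dirichlet condition via Lemma \ref{lemmacorrespondence1}(iii) into the quantitative dynamical statement that $\rho_\nform(g_t\Lambda)\lesssim 1$ together with $\|\pp_t\|\asymp e^t$ along unbounded $t$-sequences on a positive-measure subset of $\Omega_{\nform,\Lambda_*}$; combined with ergodicity of the $g_t$-action and the cusp analysis developed in Section \ref{sectionkhinchinquadratic}, this forces the cusp structure to be one-dimensional and split over $\Q$, i.e.\ $\Qrank = \Rrank = 1$. The most delicate steps are the converse direction of (ii) and the implication (C)$\Rightarrow$(A) of (iii); both rely on reduction theory for $\orth(\nform)$ and a careful cusp analysis of $\Omega_{\nform,\Lambda_*}$ to exhibit the obstruction to uniform (respectively strong) Dirichlet when the rank conditions fail.
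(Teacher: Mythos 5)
Your overall strategy (the correspondence principle plus the dynamics of $g_t$ on $\Omega_{\nform,\Lambda_*}$, reduction theory for the rank conditions, and the horoball picture when $\Qrank=\Rrank=1$) matches the paper's, but two of your key steps rest on a false claim. In part (i) you assert that $\|g_t(\pp)\|\to\infty$ for every fixed $\pp\in\Lambda\butnot\LL_1$. This is wrong: for a $1$-normalized form one has $\|g_t(\pp)\|=\max(e^{-t}|p_0|,|p_1|,\ldots,|p_{d-1}|,e^t|p_d|)$, which stays bounded whenever $p_d=0$, i.e.\ whenever $\pp\in\LL_1^\perp$; and $L_\nform\cap\LL_1^\perp\butnot\LL_1$ is nonempty as soon as $\Rrank\geq 2$ (e.g.\ it contains $\ee_1$ when $\nform$ is $2$-normalized). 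Consequently the minimizer $\pp_t$ of $\|g_t(\cdot)\|$ on $\Lambda\cap L_\nform\butnot\{\0\}$ can perfectly well be eventually equal to a single vector of $\LL_1^\perp\butnot\LL_1$, in which case your argument produces neither infinitely many distinct approximants nor a sequence with $[\pp_{t_n}]\to[\ee_0]$. The paper avoids this by not taking the minimizer: in the recurrent case it extracts a limit lattice $\w\Lambda$ and uses Lemma \ref{lemmadense} to choose a target $\w\pp\in\w\Lambda\cap L_\nform\butnot\LL_1^\perp$; the condition $\w\pp\notin\LL_1^\perp$ is precisely what rules out an eventually constant approximating vector.

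The same oversight undermines your reasoning for the ``if'' direction of (ii). A uniform upper bound on $\rho_\nform(g_t\Lambda)$ over all of $\Omega_{\nform,\Lambda_*}$ holds for every nonsingular rational form with $\Qrank\geq 1$ (it follows from Lemma \ref{lemmaCnew}), so it cannot by itself characterize uniform Dirichlet---otherwise $\psi_1$ would be uniformly Dirichlet unconditionally, contradicting the ``only if'' direction. In the paper's proof, the hypothesis $\Qrank=\Rrank$ is used to show that the bounded-codiameter totally isotropic rational subspace $V_t$ furnished by reduction theory cannot lie in $\LL_1^\perp$ (else $V_t+\LL_1$ would be totally isotropic of dimension $\Rrank+1$), and this transversality is again what prevents the eventually-constant scenario. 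Finally, the two genuinely hard implications are only gestured at in your proposal: for ``uniformly Dirichlet $\Rightarrow\Qrank=\Rrank$'' the paper explicitly notes (footnote to Theorem \ref{theoremdirichletquadratic2}) that this direction cannot be deduced from a lattice-space analogue, and instead runs a direct Diophantine argument with targets $[\ss]=[\sum_i c_i\bb_i]$, $c_i$ linearly independent over $\Q$, extracting a uniform lower bound $|B_\form(\rr_m,\ss)|\geq\epsilon$ that leads to a contradiction; and for (C)$\Rightarrow$(A) one needs, beyond Moore ergodicity, the explicit construction of lattices $g_\tt\Lambda_*$ deep in the cusp (Claim \ref{claimLambdat}) followed by a rotation in the $\Rrank>1$ directions that defeats the strong Dirichlet property---``the cusp structure is one-dimensional'' does not follow from ergodicity alone.
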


Except for the forward direction of (ii) (i.e.\ uniformly Dirichlet implies $\Qrank = \Rrank$), which we will prove separately (see p.\pageref{pageuniformlydirichlet}), all of these results are consequences of the following theorem together with the correspondence principle,\Footnote{However, the correspondence principle cannot be used to deduce Theorem \ref{theoremdirichletquadratic2} from Theorem \ref{theoremdirichletquadratic} (or similarly, Theorem \ref{theoremkhinchinquadratic2} from Theorem \ref{theoremkhinchinquadratic}), due to the lack of an analogue of Observation \ref{observationcompact} for $\pi_2$. Similar considerations prevent the forwards direction of Theorem \ref{theoremdirichletquadratic}(ii) from being deduced from an appropriate analogue in the space of lattices.} namely Lemma \ref{lemmacorrespondence1}(i,ii) and Observations \ref{observationcompact} and \ref{observationlebesgue}. Details are left to the reader.

\begin{theorem}
\label{theoremdirichletquadratic2}
Fix $d\geq 2$, let $\nform$ be a nonsingular  quadratic form on $\R^{d + 1}$ with  $\Qrank\geq 1$ which is $\Qrank$-normalized. Fix $\Lambda_*\in\Omega_\nform$ commensurable to $\Z^{d + 1}$. Then:
\begin{itemize}
\item[(i)] For all $\Lambda\in\Omega_{\nform,\Lambda_*}$, there exists $C_\Lambda > 0$ such that infinitely many $\pp\in\Lambda\cap L_\nform\smallsetminus\{\0\}$ satisfy
\begin{equation}
\label{psi1dirichlet2}
\dist(\pp,\LL_1) \leq C_\Lambda.
\end{equation}
\item[(ii)] If $\Qrank = \Rrank$, then the constant $C_\Lambda$ in \eqref{psi1dirichlet2} can be made independent of $\Lambda$.
\item[(iii)] The following are equivalent:
\begin{itemize}
\item[(A)] $\Qrank = \Rrank = 1$.
\item[(B$'$)] There exist $C,\numberq_0 > 0$ such that for all $\Lambda\in\Omega_{\nform,\Lambda_*}$ and for all $\numberq\geq\numberq_0$ there exists $\pp\in \Lambda\cap L_\nform\smallsetminus\{\0\}$ with $\|\pp\| \leq \numberq$ such that
\begin{equation}
\label{strongdirichlet2}
\dist(\pp,\LL_1) \leq C\sqrt{\frac{\|\pp\|}{\numberq}}\cdot
\end{equation}
\item[(C$'$)] The set
\[
\{\Lambda\in\Omega_{\nform,\Lambda_*} : \exists\, C,\numberq_0 > 0 \all\, \numberq\geq\numberq_0 \;\; \exists\, \pp\in \Lambda\cap L_\nform\smallsetminus\{\0\} \text{ satisfying $\|\pp\| \leq \numberq$ and \eqref{strongdirichlet2}}\}
\]
has positive $\mu_{\nform,\Lambda_*}$-measure.
\end{itemize}
\end{itemize}
\end{theorem}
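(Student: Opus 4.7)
The strategy is to translate parts (i)--(iii) into dynamical statements about the orbit $(g_t \Lambda)_{t \geq 0}$ in $\Omega_{\nform, \Lambda_*}$ via the correspondence principle (Lemma \ref{lemmacorrespondence1}, especially \eqref{correspondence1D}, together with Observation \ref{observationgt}), and then combine Mahler's compactness (Corollary \ref{theoremmahlerquadratic}) with reduction theory for $\orth(\nform)$. Under this dictionary, part (i) becomes non-divergence of the orbit, while (ii) and (iii) become uniform non-divergence and uniform boundedness statements as $\Lambda$ varies. The key calculation, used repeatedly, is that for $\pp \in \Lambda \cap L_\nform$ with $|p_0| = \|\pp\|$, one has $\|g_{t(\pp)} \pp\| \asymp \dist(\pp, \LL_1)$ at $t(\pp) = \log(\|\pp\|/\dist(\pp, \LL_1))$, while conversely any short vector in $g_t \Lambda \cap L_\nform$ pulls back to a vector close to $\LL_1$.

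For part (i), I argue by contradiction: if the orbit $(g_t \Lambda)_{t \geq 0}$ diverged in $\Omega_{\nform, \Lambda_*}$, then by Corollary \ref{theoremmahlerquadratic} we would have $\mindist_\nform(g_t \Lambda) \to 0$, and by reduction theory the direction of this escape forces $\Lambda$ to contain a nonzero integral vector in $\LL_1$, contradicting the hypothesis $\Lambda \cap \LL_1 = \{\0\}$. Non-divergence then yields a sequence $t_n \to \infty$ with $\mindist_\nform(g_{t_n} \Lambda) \leq C_\Lambda$; pulling back via $g_{-t_n}$ and applying Observation \ref{observationgt} produces $\pp_n \in \Lambda \cap L_\nform \setminus \{\0\}$ with $\dist(\pp_n, \LL_1) \leq C_\Lambda$. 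Infinitude of distinct approximations at growing scales follows because, were all $\pp_n$ equal to some fixed $\pp$, then $\|g_{t_n} \pp\| \leq C_\Lambda$ for arbitrarily large $t_n$ would force $p_d = 0$, confining $\pp$ to a thin subvariety that cannot accommodate all approximations at every scale (given the density of $\orth(\nform;\Lambda_*)$-translates of $[\ee_0]$ on $M_\nform$).

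For parts (ii) and (iii), the tool is the generalized Iwasawa decomposition $\orth(\nform) = KAN$ (with $A$ the maximal $\R$-split torus) combined with the reduction theory of \cite[Proposition 2.2]{Leuzinger}: every class in $\Omega_{\nform, \Lambda_*}$ admits a Siegel representative $k g_\tt n \Lambda_*$ with $k, n$ in fixed compact sets and $\tt$ in a positive Weyl chamber of the $\Q$-split torus. When $\Qrank = \Rrank$, the $\Q$-split and $\R$-split tori agree, so $g_t$ shifts the first coordinate of $\tt$ linearly; for each $\Lambda$ we can pick $t$ to push $\tt$ into a uniformly compact region of the chamber, giving $\mindist_\nform(g_t \Lambda) \geq c$ uniformly in $\Lambda$ --- proving (ii). When additionally $\Rrank = 1$, the Weyl chamber is one-dimensional, so this push can be arranged for \emph{every} $t \geq t_0$ at once, yielding strong Dirichlet uniformly in $\Lambda$ and $\numberq$; this is (iii)(A) $\Rightarrow$ (B$'$). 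The implication (B$'$) $\Rightarrow$ (C$'$) is trivial. For (C$'$) $\Rightarrow$ (A), the case $\Qrank < \Rrank$ is excluded by the forward direction of (ii); the case $\Qrank = \Rrank \geq 2$ is excluded by Howe--Moore mixing of $(g_t)$ on $\Omega_{\nform, \Lambda_*}$, which forces $\mu_{\nform, \Lambda_*}$-a.e.\ orbit to be unbounded, so the set of $\Lambda$ satisfying (B$'$) has measure zero.

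The principal difficulty is the reduction-theoretic bookkeeping in (ii) and (iii): converting the abstract Siegel decomposition into quantitative bounds on $\mindist_\nform(g_t \Lambda)$ uniformly in $\Lambda$ requires a careful analysis of the $(g_t)$-action on the positive Weyl chamber and of the conjugation action on the unipotent factor, paralleling the reduction-theoretic calculations that will underlie Proposition \ref{propositionkDL}.
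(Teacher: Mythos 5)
Your toolkit (the correspondence principle, the Mahler-type compactness criterion, reduction theory, ergodicity) is the same as the paper's, but several of the steps on which your argument pivots do not hold as stated. In (i) you assert that the orbit $(g_t\Lambda)_{t\geq 0}$ cannot diverge when $\LL_1\cap\Lambda = \{\0\}$; you give no argument for this beyond "reduction theory forces an integral vector in $\LL_1$", and no such implication is available in general (degeneracy of divergent trajectories is a delicate matter once $\Qrank\geq 2$). More importantly it is unnecessary: the paper splits into two cases, and in the divergent case the short vectors $g_{t_n}(\pp_n)\to\0$ themselves give $\dist(\pp_n,\LL_1)\leq\|g_{t_n}(\pp_n)\|\to 0$ via Observation \ref{observationgt}. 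In the recurrent case, your mechanism for producing \emph{infinitely many} distinct $\pp$ is also incomplete: a fixed $\pp$ with $\|g_{t_n}\pp\|$ bounded merely lies in $\LL_1^\perp$, which is not by itself a contradiction; the paper uses Lemma \ref{lemmadense} to choose the limit vector $\w\pp\in\w\Lambda\cap L_\nform\butnot\LL_1^\perp$, and that choice is what rules out eventual constancy. In (ii), the claim that "$g_t$ shifts the first coordinate of $\tt$ linearly" in the Siegel coordinates is false: $g_t$ acts on the left and does not commute with the $K$-, $M$- or $N$-components, so the $A^+$-coordinate of $g_t\Lambda$ is not $\tt + t\ee_0$. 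The paper instead distills from reduction theory a flow-compatible statement (Lemma \ref{lemmaCnew}: every $\Lambda\in\Omega_{\nform,\Lambda_*}$ contains a totally isotropic $\Lambda$-rational subspace $V$ of dimension $\Qrank$ with $\Codiam(V\cap\Lambda)$ uniformly bounded) and then argues by cases on whether $\LL_1\subset V_t$, using Minkowski's theorem in one case and the hypothesis $\Qrank=\Rrank$ in the other to guarantee $V_t\not\subset\LL_1^\perp$.

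In (iii) you identify (B$'$) with uniform boundedness of forward orbits, and this identification is wrong. Condition \eqref{strongdirichlet2} for all $\numberq$ translates, in the height/width/length coordinates, into $g_t\Lambda$ eventually lying in the closed but \emph{noncompact} sets $\FF_C$, which demand a short vector $\qq$ satisfying the additional shape constraint $W(\qq)\leq\sqrt{CH(\qq)}$; an upper bound on $\rho_\nform(g_t\Lambda)$ holds for every lattice, so "a.e.\ orbit is unbounded" does not refute (B$'$) --- applied verbatim, your mixing argument would equally "refute" the case $\Qrank=\Rrank=1$, contradicting (A)$\Rightarrow$(B$'$). The correct use of ergodicity is the paper's: positive measure of $\bigcup_C\limsup_t g_{-t}(\FF_C)$ forces some $\FF_C$ to be conull, hence (being closed) equal to $\Omega_{\nform,\Lambda_*}$; one must then \emph{separately} exhibit, when $\Rrank>1$, lattices outside every $\FF_C$, which the paper does by the explicit construction $\Lambda_t=g_\tt\Lambda_*$ rotated so that its short isotropic vectors lie in $\LL_\Rrank\butnot\LL_1$ and hence have $L(\qq)=0<W(\qq)$. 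Symmetrically, (A)$\Rightarrow$(B$'$) is not a consequence of the Weyl chamber being one-dimensional: it rests on the definiteness of the remainder $\w\nform$ when $\Rrank=1$, which yields $W^2(\qq)\lesssim H(\qq)L(\qq)$ and thereby converts the bounded vector supplied by Lemma \ref{lemmaCnew} into one meeting the shape constraint. Finally, excluding $\Qrank<\Rrank$ by appeal to "the forward direction of (ii)" is circular in this context, since that direction concerns uniform Dirichlet on $M_\form$ and deducing its failure from the positive-measure hypothesis (C$'$) is essentially the content of what is to be proved.
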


\begin{proof}[Proof of \text{(i)}]
We require the following preliminary result:

\begin{lemma}
\label{lemmadense}
Let $\form$ be a nonsingular quadratic form on $\R^{d + 1}$, and fix $\Lambda\in\Omega_\form$ satisfying $\Lambda\cap L_\form\smallsetminus\{\0\}\neq\emptyset$. Then
\[
\Span(\Lambda\cap L_\form) = \R^{d + 1}.
\]
\end{lemma}
\begin{subproof}
After applying a matrix (namely one whose columns form a basis of $\Lambda$), we may without loss of generality assume that $\Lambda = \Z^{d + 1}$.  
The assumption $\Lambda\cap L_\form \smallsetminus \{\0\}\neq\emptyset$ will then imply that $\Qrank\geq 1$, and, by applying 
Proposition \ref{propositionrenormalization}, we may without loss of generality assume that $\form$ is 1-normalized and $\Lambda$ is commensurable with $\Z^{d + 1}$.
Then clearly \eq{rational}{r_0\ee_0,r_d\ee_d\in\Lambda\cap L_\form\text{ for some nonzero }r_0,r_d\in\Q.} On the other hand, for each $i = 1,\ldots,d - 1$, we have
\[
\ee_i + \form(\ee_i)\ee_0 - \ee_d \in 
L_\form
\]
by direct calculation. Since $\Lambda$ is $Q$-arithmetic and commensurable with $\Z^{d + 1}$, 
it follows that $$r_i\big(\ee_i + \form(\ee_i)\ee_0 - \ee_d \big)\in \Lambda$$ for some nonzero $r_i\in\Q$; hence, in view of  \equ{rational}, $\ee_i\in \Span(\Lambda\cap L_\form)$.
\end{subproof}

For $t\geq 0$, let $g_t\in\orth(\nform)$ be as in \eqref{gtdef2}. Applying Corollary \ref{corollarymahlerquadratic} to the lattices $(g_t \Lambda)_{t\geq 0}$, we see that one of the following two cases holds:
\begin{itemize}
\item[Case 1:] There exists a sequence $t_n\to\infty$ and a sequence $g_{t_n}(\Lambda\cap L_\nform)\ni g_{t_n}(\pp_n)\to 0$. In this case, for all sufficiently large $n$, \eqref{gt} implies that $\pp_n$ satisfies \eqref{psi1dirichlet2}. If the set $\{\pp_n : n\in\N\}$ is infinite, this completes the proof. Otherwise, there exists $\pp\in\Lambda$ such that $\pp_n = \pp$ for arbitrarily large $n$. In particular, $g_{t_{n_k}}(\pp) \to 0$ for some increasing sequence $(n_k)_1^\infty$. Comparing with \eqref{gtdef2}, we see that $\pp\in\LL_1$. Since the vectors $n\pp$ ($n\in\Z$) all satisfy \eqref{psi1dirichlet2}, this completes the proof.
\item[Case 2:] There exists a sequence $t_n\to\infty$ such that $g_{t_n}\Lambda \to \w\Lambda \in \Omega_{\nform,\Lambda_*}$. In this case, by Lemma \ref{lemmadense} we have $\w\Lambda\cap L_\nform\nsubset \LL_1^\perp$, where $\LL_1^\perp$ denotes the set of vectors $\form$-orthogonal to $\ee_1$ as in Definition \ref{def_orthogonal}. Thus we may fix $\w\pp\in\w\Lambda\,\cap\, L_\nform\smallsetminus \LL_1^\perp$. Since $g_{t_n}\Lambda \to \w\Lambda$, there is a sequence $g_{t_n}\Lambda\ni g_{t_n}(\pp_n)\to \w\pp$. Let $C_\Lambda = 2\|\w\pp\|$; then for all sufficiently large $n$, \eqref{gt} implies that $\pp_n$ satisfies \eqref{psi1dirichlet2}. If the set $\{\pp_n : n\in\N\}$ is infinite, this completes the proof. Otherwise, there exists $\pp\in\Lambda$ such that $\pp_n = \pp$ for arbitrarily large $n$. In particular, $e^{t_{n_k}} \dist(\pp,\LL_1^\perp) \to \dist(\w\pp,\LL_1^\perp)\neq 0$ for some increasing sequence $(n_k)_1^\infty$. This is clearly a contradiction.
\qedhere\end{itemize}
\end{proof}
\begin{proof}[Proof of \text{(ii)}]
We first need to define the codiameter of a discrete subgroup.
\begin{definition}
\label{definitioncodiam}
The \emph{codiameter} of a discrete 
subgroup $\Gamma \subgp \R^{d+1}$, written $\Codiam(\Gamma)$, is the diameter of the quotient space $\Span(\Gamma)/\Gamma$. 
\end{definition}
We require the following lemma. 
\begin{replemma}{lemmaCnew}
There exists $C_1 > 0$ such that for every $\Lambda\in \Omega_{\nform,\Lambda_*}$, there exists a totally isotropic $\Lambda$-rational\Footnote{A subspace $V\subgp\R^{d + 1}$ is \emph{$\Lambda$-rational} if $\Span(\Lambda\cap V) = V$.} subspace $V\subgp\R^{d + 1}$ of dimension $\Qrank$ satisfying $\Codiam(V\cap\Lambda)\leq C_1$.
\end{replemma}
The proof of Lemma \ref{lemmaCnew} requires reduction theory, so we delay its proof until Section \ref{sectionreductiontheory}.

\smallskip

Let $C_1$ be as in Lemma \ref{lemmaCnew}. Fix $\Lambda\in\Omega_{\nform,\Lambda_*}$. For each $t\geq 0$, applying Lemma \ref{lemmaCnew} to the lattice $g_t\Lambda\in\Omega_{\nform,\Lambda_*}$ yields a totally isotropic $g_t\Lambda$-rational subspace $V_t\subgp\R^{d + 1}$ of dimension $\Qrank$ satisfying
\begin{equation}
\label{codiam}
\Codiam(V_t\cap g_t\Lambda)\leq C_1.
\end{equation}
At this point we divide the proof into two cases:
\begin{itemize}
\item[Case 1:] $\LL_1\subgp V_t$ for some $t\geq 0$. In this case, since the set $S := \{\xx\in g_{-t}(V_t) : \dist(\xx,\LL_1) \leq C_1\}$ has infinite volume in the vector space $g_{-t}(V_t)$, by Minkowski's theorem it contains infinitely many lattice points $\pp\in \Lambda\cap S$. Note that each such $\pp$ is in $L_\nform$ since $V_t$ is totally isotropic. On the other hand, \eqref{psi1dirichlet2} is clearly satisfied (with $C_\Lambda = C_1$ independent of $\Lambda$). This completes the proof.
\item[Case 2:] $\LL_1\nsubset V_t$ for all $t\geq 0$. Fix $t\geq 0$. Note that if $V_t\subgp \LL_1^\perp$, then $V_t + \LL_1$ is a totally isotropic vector space of dimension $\Qrank + 1 = \Rrank + 1 > \Rrank$, a contradiction. Thus $V_t\nsubset\LL_1^\perp$. Fix a unit vector $\vv_t \in V_t$ which is perpendicular to $V_t\cap \LL_1^\perp$ with respect to the Euclidean quadratic form $\EE_{d + 1} = \sum_0^d x_i^2$. By \eqref{codiam}, there exists $g_t(\pp_t) \in V_t\cap g_t\Lambda$ satisfying $\|g_t(\pp_t) - 2C_1 \vv_t\| \leq C_1$. \eqref{gt} implies that $\pp_n$ satisfies \eqref{psi1dirichlet2}, with $C_\Lambda = 3C_1$ independent of $\Lambda$. If the set $\{\pp_t : t\geq 0\}$ is infinite, this completes the proof. Otherwise, there exists $\pp\in\Lambda$ such that $\pp_t = \pp$ for arbitrarily large $t$. However, for all $t$ we have $g_t(\pp_t)\in V_t\smallsetminus(V_t\cap\LL_1^\perp) = V_t\smallsetminus\LL_1^\perp$, and thus $\pp\notin\LL_1^\perp$. This implies that $\|g_t(\pp)\|\to\infty$, a contradiction.
\qedhere\end{itemize}
\end{proof}
\begin{proof}[Proof of \text{(iii)}]
For the purpose of this proof, we introduce a new system of coordinates on $\R^{d + 1}$. For $\xx\in\R^{d + 1}$ let
\begin{align*}
H(\xx) &= |x_0|\\
W(\xx) &= \|(x_1,\ldots,x_{d - 1})\|\\
L(\xx) &= |x_d|.
\end{align*}
We will think of the letters $H$, $W$, and $L$ as being short for ``height'', ``width'', and ``length'', respectively. Note that for $t\in\R$,
\begin{align*}
H(g_t\xx) &= e^{-t} H(\xx)\\
W(g_t\xx) &= W(\xx)\\
L(g_t\xx) &= e^t L(\xx).
\end{align*}
In other words, for $t\geq 0$, applying $g_t$ decreases height and increases length while leaving width fixed. Moreover,
\begin{align*}
\|\xx\| &= \max\big(H(\xx),W(\xx),L(\xx)\big)\\
\dist(\xx,\LL_1) &= \max\big(W(\xx),L(\xx)\big).
\end{align*}
If $\xx\in L_\nform$, then
\begin{equation}
\label{Linequality}
H(\xx) L(\xx) = |\w\nform(x_1,\ldots,x_{d - 1})| \leq \|\w\nform\| W^2(\xx),
\end{equation}
where $\w\nform$ is the remainder of $\nform$.

We will now rephrase the Diophantine condition on a lattice $\Lambda\in\Omega_{\nform,\Lambda_*}$ described in (B$'$) and (C$'$) of Theorem \ref{theoremdirichletquadratic2}(iii) as a dynamical condition on the same lattice $\Lambda$. Precisely,

\begin{observation}
Fix $C,\numberq_0 \geq 1$ with $\numberq_0 > C^2$, and fix $\Lambda\in\Omega_{\nform,\Lambda_*}$. Then \text{(1) \implies (2) \implies (3)}:
\begin{itemize}
\item[(1)] For all $t\geq \frac12 \log(\numberq_0)$, there exists $\qq\in g_t\Lambda\cap L_\nform\smallsetminus\{\0\}$ satisfying $\|\qq\| \leq C$ and $W(\qq) \leq \sqrt{C H(\qq)}$.
\item[(2)] For all $\numberq\geq\numberq_0$, there exists $\pp \in \Lambda\cap L_\nform\smallsetminus\{\0\}$ with $\|\pp\|\leq\numberq$ satisfying \eqref{strongdirichlet2}.
\item[(3)] For all $t\geq\log(\numberq_0)$, there exists $\qq\in g_t\Lambda\cap L_\nform\smallsetminus\{\0\}$ satisfying $\|\qq\|\leq C^2\max(1,\|\w\nform\|)$ and $W(\qq) \leq C\sqrt{H(\qq)}$.
\end{itemize}
\end{observation}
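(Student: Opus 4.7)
The plan is to prove each implication by converting between the lattice point $\pp \in \Lambda$ and its image $\qq = g_t \pp$ under an appropriate time $t$, using the coordinate transformation rules $H(g_t \xx) = e^{-t}H(\xx)$, $W(g_t \xx) = W(\xx)$, $L(g_t \xx) = e^t L(\xx)$ together with the light cone relation $H(\xx) L(\xx) \leq \|\w\nform\| W(\xx)^2$ for $\xx \in L_\nform$. The key is to match the parameters $t$ and $\numberq$ correctly so that the specified constants come out.

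For (A) $\Rightarrow$ (B), given $\numberq \geq \numberq_0$, I would set $t := \log(\numberq/C)$. The condition $\numberq_0 > C^2$ ensures $t \geq \tfrac{1}{2}\log \numberq_0$, so (A) provides $\qq \in g_t \Lambda \cap L_\nform\setminus\{\0\}$ with $\|\qq\| \leq C$ and $W(\qq) \leq \sqrt{CH(\qq)}$. Setting $\pp := g_{-t}\qq$, the scaling laws give $H(\pp) \leq \numberq$, $W(\pp) \leq C$, and $L(\pp) \leq C^2/\numberq$, so $\|\pp\| \leq \numberq$. Squaring the inequality $W(\qq) \leq \sqrt{CH(\qq)}$ and substituting $H(\qq) = (C/\numberq) H(\pp)$ yields $W(\pp)^2 \leq (C^2/\numberq) H(\pp) \leq (C^2/\numberq)\|\pp\|$, so $W(\pp) \leq C\sqrt{\|\pp\|/\numberq}$. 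For the bound on $L(\pp)$, when $H(\pp) \asymp \|\pp\|$ the light cone gives $L(\pp) \lesssim W(\pp)^2/H(\pp) \lesssim C^2/\numberq$, which is dominated by $C\sqrt{\|\pp\|/\numberq}$; in the complementary regime $\|\pp\| \lesssim C^2/\numberq$ is so small that $\dist(\pp,\LL_1) \leq \|\pp\|$ already does the job.

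For (B) $\Rightarrow$ (C), given $t \geq \log\numberq_0$, I set $\numberq := e^t$ and apply (B) to obtain $\pp \in \Lambda \cap L_\nform \setminus \{\0\}$ with $\|\pp\| \leq \numberq$ and $\dist(\pp, \LL_1) \leq C\sqrt{\|\pp\|/\numberq}$. Setting $\qq := g_t \pp$, we compute $H(\qq) = H(\pp)/\numberq \leq 1$ and $W(\qq) = W(\pp) \leq C$. The inequality $W(\qq) \leq C\sqrt{H(\qq)}$ rearranges to $W(\pp)^2 \leq C^2 H(\pp)/\numberq$, which holds when $H(\pp) = \|\pp\|$. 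The critical step is bounding $L(\qq) = \numberq L(\pp)$; the light cone gives $L(\pp) \leq \|\w\nform\| W(\pp)^2 / H(\pp)$, which in the principal case $H(\pp) = \|\pp\|$ yields $L(\qq) \leq \|\w\nform\| C^2$, so $\|\qq\| \leq C^2 \max(1, \|\w\nform\|)$.

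The main obstacle is the degenerate case $H(\pp) < \|\pp\|$ in (B) $\Rightarrow$ (C), where the light cone bound on $L(\pp)/H(\pp)$ can blow up. The resolution is to observe that in this case $\|\pp\| = \dist(\pp, \LL_1) \leq C\sqrt{\|\pp\|/\numberq}$ forces $\|\pp\| \leq C^2/\numberq$, so $H(\qq) \leq C^2/\numberq^2$, $W(\qq) \leq C^2/\numberq$, and $L(\qq) \leq C^2$, all of which are small and the conditions are verified directly (noting in particular that the required $W(\qq) \leq C\sqrt{H(\qq)}$ bound follows after taking an appropriate integer multiple of $\pp$, or by exploiting the light cone on $\qq$ itself). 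Thus the proof reduces to two parallel direct verifications that lean on the same case split based on whether $H$ dominates the norm.
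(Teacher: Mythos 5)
Your overall strategy is the same as the paper's: the same parameter matching ($t=\log(\numberq/C)$ for (A)$\Rightarrow$(B), $\numberq=e^t$ for (B)$\Rightarrow$(C)), the same scaling rules for $H$, $W$, $L$, and the same use of the light-cone identity $H(\qq)L(\qq)=|\w\nform(q_1,\ldots,q_{d-1})|$ to control $L(\qq)$ in the second implication; your principal cases agree with the paper's computations. One minor deviation in (A)$\Rightarrow$(B): the detour through the light cone to bound $L(\pp)$ is unnecessary and costs you the exact constant. You already have $L(\pp)\le C^2/\numberq$ directly from the scaling, and then $L(\pp)=\sqrt{L(\pp)}\sqrt{L(\pp)}\le\sqrt{L(\pp)}\cdot C/\sqrt{\numberq}\le C\sqrt{\|\pp\|/\numberq}$ gives \eqref{strongdirichlet2} with the stated constant $C$ and no case split; your route introduces a spurious factor of $\|\w\nform\|$ and so proves (B) only with an enlarged constant (harmless for the application, but not the literal statement).

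The genuine gap is your treatment of the degenerate case $H(\pp)<\|\pp\|$ in (B)$\Rightarrow$(C). Neither proposed fix works. Replacing $\pp$ by $n\pp$ multiplies $W$ by $n$ but $\sqrt{H}$ only by $\sqrt{n}$, so the ratio $W/\sqrt{H}$ deteriorates. And the light-cone relation for $\qq$, namely $H(\qq)L(\qq)\le\|\w\nform\|W(\qq)^2$, bounds $W(\qq)$ from \emph{below}; it can be reversed into an upper bound only when $\w\nform$ is definite (i.e.\ $\Rrank=1$), and even then only with a constant depending on $\w\nform$ rather than the required $C$. Concretely, if $p_0=0$ while $W(\pp)>0$, then $H(\qq)=0$ and the inequality $W(\qq)\le C\sqrt{H(\qq)}$ is simply false for $\qq=g_t(\pp)$ and for every multiple of it, so no direct verification can succeed. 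The paper instead kills this case at the source: rewriting \eqref{strongdirichlet2} as $\max(W(\pp),L(\pp))\le C\sqrt{\max(H(\pp),W(\pp),L(\pp))/\numberq}$, it invokes the hypothesis $\numberq\ge\numberq_0>C^2$ to exclude the possibility that the maximum on the right is $W(\pp)$ or $L(\pp)$, so that $\max(W(\pp),L(\pp))\le C\sqrt{H(\pp)/\numberq}$ holds unconditionally and your principal-case computation applies to every $\pp$ produced by (B). You need some such argument that the degenerate case does not arise, rather than an attempt to verify (C) within it.
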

\begin{subproof}[\text{(1) \implies (2)}]
Fix $\numberq\geq\numberq_0$, and let $t = \log(\numberq/C) \geq \frac12 \log(\numberq_0)$. Let $\qq\in g_t\Lambda\cap L_\nform\smallsetminus\{\0\}$ be as in (1), and let $\pp = g_{-t}(\qq) \in \Lambda\cap L_\nform\smallsetminus\{\0\}$. Then
\[
\|\pp\| \leq e^t \|\qq\| \leq \frac{\numberq}{C}C = \numberq.
\]
To demonstrate \eqref{strongdirichlet2}, we bound $W(\pp)$ and $L(\pp)$. First of all,
\begin{equation}
\label{Wbound}
W(\pp) = W(\qq) \leq \sqrt{C H(\qq)} = \sqrt{C \frac{H(\pp)}{\numberq/C}} = C \sqrt\frac{H(\pp)}{\numberq}.
\end{equation}
On the other hand, we have
\[
L(\pp) = \frac{L(\qq)}{\numberq/C} \leq \frac{C}{\numberq/C} = \frac{C^2}{\numberq},
\]
which implies
\[
L(\pp) = \sqrt{L(\pp)}\sqrt{L(\pp)} \leq \sqrt{L(\pp)}\sqrt\frac{C^2}{\numberq} = C\sqrt\frac{L(\pp)}{\numberq}.
\]
Combining with \eqref{Wbound} demonstrates \eqref{strongdirichlet2}.
\end{subproof}
\begin{subproof}[\text{(2) \implies (3)}]
Fix $t\geq\log(\numberq_0)$, and let $\numberq = e^t\geq\numberq_0$. Let $\pp \in \Lambda\cap L_\nform\smallsetminus\{\0\}$ be as in (2), and let $\qq = g_t(\pp) \in g_t\Lambda\cap L_\nform\smallsetminus\{\0\}$. Then
\[
H(\qq) = e^{-t}H(\pp) \leq e^{-t} \numberq = 1.
\]
On the other hand, \eqref{strongdirichlet2} is written in terms of height, width, and length as
\[
\max\big(W(\pp), L(\pp)\big) \leq C \sqrt\frac{\max\big(H(\pp),W(\pp),L(\pp)\big)}{\numberq},
\]
and since $\numberq\geq \numberq_0 > C^2$, the case where the maximum is $W(\pp)$ or $L(\pp)$ cannot occur. Thus
\[
\max\big(W(\pp), L(\pp)\big) \leq C \sqrt\frac{H(\pp)}{\numberq}.
\]
In particular,
\[
W(\qq) = W(\pp) \leq C \sqrt\frac{H(\pp)}{e^t} = C\sqrt{H(\qq)}.
\]
Since $\qq\in L_\nform$, \eqref{Linequality} gives
\[
L(\qq) \leq \|\w\nform\|\frac{W^2(\qq)}{H(\qq)} \leq \|\w\nform\|\frac{C^2 H(\qq)}{H(\qq)} = C^2 \|\w\nform\|.
\]
Thus $\|\qq\| = \max\big(H(\qq), W(\qq), L(\qq)\big) \leq \max(1,C,C^2\|\nform\|) \leq C^2\max(1,\|\nform\|)$.
\end{subproof}

For each $C > 0$ consider the set
\[
\FF_C := \{\Lambda\in\Omega_{\nform,\Lambda_*} : \exists\, \qq\in \Lambda\cap L_\nform\smallsetminus\{\0\} \text{ such that } \|\qq\| \leq C, \;\; W(\qq) \leq \sqrt{C H(\qq)} \}.
\]
Then (B$'$) and (C$'$) of Theorem \ref{theoremdirichletquadratic2}(iii) are equivalent to the following conditions, respectively:
\begin{itemize}
\item[(B$''$)] There exists $C > 0$ such that for all $\Lambda\in\Omega_{\nform,\Lambda_*}$ and for all $t\geq C$, $g_t\Lambda\in\FF_C$.
\item[(C$''$)] The set
\[
\{\Lambda\in\Omega_{\nform,\Lambda_*} : \exists\, C > 0 \all\, t\geq C \;\; g_t\Lambda\in\FF_C\} = \bigcup_{C > 0}\liminf_{t\to\infty}g_{-t}(\FF_C)
\]
has positive $\mu_{\nform,\Lambda_*}$-measure.
\end{itemize}
Now (B$''$) is clearly equivalent to the following:
\begin{itemize}
\item[(B$'''$)] There exists $C > 0$ such that $\FF_C = \Omega_{\nform,\Lambda_*}$.
\end{itemize}
We claim that (C$''$) is also equivalent to (B$'''$). Indeed, it is clear that (B$'''$) implies (C$''$). Conversely, if (C$''$) holds, then by Moore's ergodicity theorem \cite[Theorem III.2.1]{BekkaMayer},\Footnote{If $d = 3$ and $\Rrank = 2$, then the group $G = \orth(\form)$ is not simple (being isomorphic to $\orth(2,2)$), so one should use \cite[Theorem III.2.5]{BekkaMayer} rather than \cite[Theorem III.2.1]{BekkaMayer}. Note that the fact that the group $(g_t)_{t\in\R}$ is totally noncompact in $G$ follows from the inequality $(\pi_i)'(\zz)\neq\0$ proven on p.\pageref{pagerefpiz} of the present paper.} the set $\FF_C$ has full $\mu_{\nform,\Lambda_*}$ measure, where $C$ is large enough so that the $(g_t)$-invariant set $\liminf_{t\to\infty} g_{-t}(\FF_C)$ has positive measure. But since $\FF_C$ is closed, this implies (B$'''$).

\smallskip
To complete the proof, we must show that (B$'''$) is equivalent to (A).
\begin{subproof}[Proof of \text{(A) \implies (B$'''$)}]
Since $\Rrank = 1$, the remainder $\w\nform$ does not represent zero over $\R$, i.e.\ it is either positive definite or negative definite. Without loss of generality suppose that it is positive definite. Then $\sqrt{\w\nform}$ is a norm on $\R^{d - 1}$, so there exists $K > 0$ such that
\[
\w\nform(\xx) \geq \frac{1}{K}W^2(\xx) \all\, \xx\in\R^{d - 1}.
\]
Then for all $\xx\in L_\nform$,
\begin{equation}
\label{Linequality2}
W^2(\xx) \leq K\w\nform(x_1,\ldots,x_{d - 1}) = -Kx_0 x_d = K H(\xx) L(\xx),
\end{equation}
providing an asymptotic converse to \eqref{Linequality}.

Let $C_1 > 0$ be as in Lemma \ref{lemmaCnew}. Fix $\Lambda\in\Omega_{\nform,\Lambda_*}$, and we will show that $\Lambda\in\FF_{C_1 K}$. Indeed, by Lemma \ref{lemmaCnew} there exists $\qq\in \Lambda\cap L_\nform\smallsetminus\{\0\}$ satisfying $\|\qq\| \leq C_1$. Then \eqref{Linequality2} gives
\[
W(\qq) \leq \sqrt{K H(\qq) L(\qq)} \leq \sqrt{C_1 K H(\qq)},
\]
demonstrating that $\Lambda\in\FF_{C_1 K}$.
\end{subproof}
\begin{subproof}[Proof of \text{(B$'''$) \implies (A)}]

\begin{claim}
We may without loss of generality\Footnote{Here we abandon the assumption that $\Lambda_*$ is commensurable to $\Z^{d + 1}$.} suppose that $\nform$ is $\Rrank$-normalized and that $\Lambda_*\cap \LL_\Qrank = \Z^{d + 1}\cap\LL_\Qrank$.
\end{claim}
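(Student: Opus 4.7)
The plan is to build an $\R$-linear change of coordinates $T=\matrix_1 g_{\matrix_2}^{-1}\in\GL_{d+1}(\R)$ in two stages --- first renormalizing the form, then aligning the lattice on $\LL_\Qrank$ --- and then to check that both the hypothesis (B$'''$) and the conclusion (A) survive the reduction. Throughout I write $\LL_\Qrank':=\Span(\ee_{d-\Qrank+1},\ldots,\ee_d)$ for the natural dual of $\LL_\Qrank$ under a $\Qrank$-normalized form, and $M:=\Span(\ee_\Qrank,\ldots,\ee_{d-\Qrank})$ for the middle subspace.

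In the first stage, since $\Qrank\le\Rrank$, I extend $\LL_\Qrank$ to a maximal $\R$-totally isotropic subspace $E\subseteq\R^{d+1}$ of $\nform$ and run the proof of Proposition \ref{propositionrenormalization} applied to $E$ with three choices: (i) the first $\Qrank$ basis vectors of $E$ are $\ee_0,\ldots,\ee_{\Qrank-1}$; (ii) the corresponding dual basis vectors in $E_2$ are $\ee_{d-\Qrank+1},\ldots,\ee_d$; (iii) the remaining basis vectors of $E$, $E_2$, and $E_3$ lie in $M$. The containment $E_3\subseteq M$ is automatic since $E_3=(E+E_2)^\perp\subseteq(\LL_\Qrank+\LL_\Qrank')^\perp=M$, and the other containments are arranged by adjusting the auxiliary basis vectors of $E$ (resp.\ $E_2$) by suitable elements of $\LL_\Qrank$ (resp.\ $\LL_\Qrank'$), which keeps them in the correct subspace. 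The resulting $\matrix_1\in\GL_{d+1}(\R)$ is block-diagonal on $\LL_\Qrank\oplus M\oplus\LL_\Qrank'$ with identity blocks on $\LL_\Qrank$ and $\LL_\Qrank'$, and $\nform\circ\matrix_1$ is $\Rrank$-normalized. I replace $(\nform,\Lambda_*)$ by $(\nform\circ\matrix_1,\matrix_1^{-1}\Lambda_*)$; this leaves $\Lambda_*\cap\LL_\Qrank$ unchanged and still commensurable with $\Z^{d+1}\cap\LL_\Qrank$. In the second stage I pick $\matrix_2\in\GL_\Qrank(\Q)$ carrying this rational lattice onto $\Z^{d+1}\cap\LL_\Qrank$, form $g_{\matrix_2}\in\GL_{d+1}(\Q)\cap\orth(\nform)$ via \eqref{gmatrix} (using that $\nform$ is now $\Rrank$-normalized hence $\Qrank$-normalized), and replace $\Lambda_*$ by $g_{\matrix_2}\Lambda_*$. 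The form is preserved and $\Lambda_*\cap\LL_\Qrank=\Z^{d+1}\cap\LL_\Qrank$, as desired.

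It remains to verify that the implication under discussion is unaffected by the reduction. Both ranks $\Qrank$ and $\Rrank$ are $\R$-linear invariants of the pair $(\nform,\Lambda_*)$, so conclusion (A) passes through. For the hypothesis (B$'''$), the composite $T$ acts block-diagonally on $\LL_\Qrank\oplus M\oplus\LL_\Qrank'$ via $\matrix_2^{-1}$, $\matrix_1|_M$, and $\matrix_2^R$ respectively --- each a fixed bounded invertible map --- and gives a bijection $\Lambda\mapsto T^{-1}\Lambda$ between the two orbits carrying null vectors to null vectors. A witness $\pp\in\Lambda\cap L_\nform$ for $\Lambda\in\FF_C$ pulls back to $\qq=T^{-1}\pp\in T^{-1}(\Lambda)\cap L_{\nform\circ T}$ with $\|\qq\|\lesssim_T C$, and the block-diagonality of $T$ together with the null-vector identity yields the coordinate inequality $W(\qq)^2\le C'H(\qq)$ with an inflated constant.

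The main obstacle is this last step. The inequality $W^2\le CH$ is not invariant under arbitrary linear changes of basis, so the argument must exploit the specific block-diagonal form of $T$ together with the null-vector relation $H(\qq)L(\qq)\lesssim W(\qq)^2$ (the analogue of \eqref{Linequality} for $\nform\circ T$). The case $\Qrank=1$ is transparent because $\matrix_2\in\GL_1(\Q)=\Q^*$ then acts by a nonzero scalar on $\LL_1=\LL_\Qrank$, making $H(\qq)$ a fixed multiple of $H(\pp)$; for $\Qrank\ge 2$ one has to handle the potential mixing of $\ee_0$ with other basis vectors of $\LL_\Qrank$ under $\matrix_2$, but since (B$'''$) only asserts the existence of \emph{some} constant, the inflation is harmless.
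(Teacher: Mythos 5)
Your two-stage construction is the same as the paper's: first conjugate by a matrix $\matrix_1$ produced by Proposition \ref{propositionrenormalization} so that the form becomes $\Rrank$-normalized, then by an element $g_{\matrix_2}$ of the form \eqref{gmatrix} to move the rational isotropic lattice onto $\Z^{d+1}\cap\LL_\Qrank$. Your refinement of the first stage --- choosing the bases in the proof of Proposition \ref{propositionrenormalization} so that $\matrix_1$ is block-diagonal with identity blocks on $\LL_\Qrank$ and on $\Span(\ee_{d-\Qrank+1},\ldots,\ee_d)$ --- is correct (the containments $E\subset\LL_\Qrank\oplus\Span(\ee_\Qrank,\ldots,\ee_{d-\Qrank})$ and the dual one for $E_2$ follow from total isotropy together with $\LL_\Qrank\subset E$), and it is genuinely useful: it makes $\matrix_1$ commute with $g_t$ and preserve $H$ and $L$ exactly and $W$ up to constants, none of which the paper's cruder choice of $\matrix_1$ guarantees. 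You are also right that the only real content of the claim is the invariance of (B$'''$) under the conjugation, which the paper disposes of in a single sentence.

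The gap is in your treatment of that invariance when $\Qrank\geq2$. The failure of the inequality $W(\qq)^2\leq C'H(\qq)$ under the change of variables is not a ``harmless inflation'' of constants: it can fail for \emph{every} $C'$. Concretely, $T^{-1}$ acts on $\LL_\Qrank$ by $\matrix_2$, so if $\matrix_2(\ee_0)\notin\R\ee_0$ then a witness of the form $\pp=N\ee_0$ (which certifies $\Lambda\in\FF_C$ whenever $N\ee_0\in\Lambda$, $N\leq C$) is sent to a vector $\qq$ with $H(\qq)=0$ and $W(\qq)>0$, and no constant repairs $W(\qq)\leq\sqrt{C'H(\qq)}$. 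This can be partially remedied by insisting that $\matrix_2$ preserve the line $\R\ee_0$ --- possible because $\R\ee_0$ meets the rational lattice $\Lambda_*\cap\LL_\Qrank$, so one may post-compose any first choice of $\matrix_2$ with an element of $\GL_\Qrank(\Z)$ carrying the relevant primitive vector back to $\ee_0$; then the first column of $\matrix_2$ is $c\ee_0$, so the coordinates $q_1,\ldots,q_{\Qrank-1}$ receive no contribution from $p_0$, giving $W(\qq)\lesssim_\times W(\pp)$ and $|q_0|\geq|c|H(\pp)-O\big(W(\pp)\big)$. Even then, witnesses with $H(\pp)\lesssim_\times 1$ (where the lower bound on $H(\qq)$ degenerates) require a separate argument, e.g.\ replacing them by better witnesses using the flow $g_t$ and the equivalence of (B$'''$) with (B$''$). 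As written, the one step the paper itself leaves unproved --- that (B$'''$) survives the conjugation --- is not established by your argument.
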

\begin{subproof}
Let $E_\Q$ be a $\Lambda_*$-rational totally isotropic subspace of $\R^{d + 1}$ of dimension $\Qrank$. Let $E_\R\supset E_\Q$ be a totally isotropic subspace of $\R^{d + 1}$ of dimension $\Rrank$. By Proposition \ref{propositionrenormalization}, there is a matrix $\matrix_1\in\GL_{d + 1}(\R)$ such that $\nform' := \nform\circ\matrix_1$ is $\Rrank$-normalized and $\matrix_1^{-1}(E_\R) = \LL_\Rrank$. In particular, $\Gamma := \matrix_1^{-1}(\Lambda_*\cap E_\Q)\subset \LL_\Rrank$. Let $\matrix_2\in\GL_\Rrank(\R)$ send $\Gamma$ to $\Z^{d + 1}\cap\LL_\Qrank$. Let $g_{\matrix_2}$ be defined by the equation \eqref{gmatrix}, so that $g_{\matrix_2}\in \orth(\nform')$. Then $g_{\matrix_2}^{-1}(\Gamma) = \Z^{d + 1}\cap\LL_\Qrank$. Letting $\matrix = \matrix_1\circ g_{\matrix_2}^{-1}$, we have $\matrix^{-1}(\Lambda_*\cap E_\Q) = \Z^{d + 1}\cap\LL_\Qrank$, or equivalently $\matrix^{-1}(\Lambda_*)\cap \LL_\Qrank = \Z^{d + 1}\cap\LL_\Qrank$. Let $\Lambda_*' = \matrix^{-1}(\Lambda_*)$, and observe that $\nform' = \nform\circ\matrix$. Then $\nform'$ is $\Rrank$-normalized and $\Lambda_*'\cap \LL_\Qrank = \Z^{d + 1}\cap\LL_\Qrank$. On the other hand, both conditions (A) and (B$'''$) are unaffected by replacing $\nform$ and $\Lambda_*$ with $\nform'$ and $\Lambda_*'$, respectively.
\end{subproof}

Now suppose (A) fails, i.e.\ $\Rrank > 1$. Fix $t\geq 0$, and let $\tt = (t,\ldots,t)\in\R^\Qrank$. Then
\[
\Lambda_t := g_\tt\Lambda_* \in \Omega_{\nform,\Lambda_*}.
\]
\begin{claim}
\label{claimLambdat}
If $\pp\in\Lambda_t\cap L_\nform$ satisfies $\|\pp\| < e^t/(2\|\nform\|)$, then $\pp\in \Gamma_t := \Lambda_t \cap \LL_\Qrank$.
\end{claim}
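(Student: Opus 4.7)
The plan is to show in two steps that $\pp\in\LL_\Qrank$; combined with the hypothesis $\pp\in\Lambda_t$ this yields $\pp\in\Gamma_t$.

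The first step is to prove $\pp\in\LL_\Qrank^\perp$ by an integrality argument. For each $i\in\{0,\ldots,\Qrank-1\}$, the vector $\ee_i$ lies in $\Z^{d+1}\cap\LL_\Qrank = \Lambda_*\cap\LL_\Qrank$, so $e^{-t}\ee_i = g_\tt(\ee_i) \in \Lambda_t$. Since $\Lambda_t \in \Omega_\nform$, polarizing the integrality $\nform(\Lambda_t)\subset\Z$ gives $2B_\nform(\xx,\yy) \in \Z$ for all $\xx,\yy\in\Lambda_t$. Applied to $\pp$ and $e^{-t}\ee_i$, and using that the $\Rrank$-normalization gives $B_\nform(\pp,\ee_i) = \tfrac12 p_{d-i}$, this yields $p_{d-i}\in e^t\Z$. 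Combined with $|p_{d-i}|\leq \|\pp\| < e^t/(2\|\nform\|) \leq e^t$ (the last inequality because $\|\nform\| \geq |B_\nform(\ee_0,\ee_d)| = 1/2$), one concludes $p_{d-i}=0$ for each such $i$, i.e.\ $\pp\in\LL_\Qrank^\perp$.

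The second step uses the maximality of $\LL_\Qrank$. Since $\LL_\Qrank\subset\LL_\Qrank^\perp$ is totally isotropic, $\nform$ descends to a well-defined quadratic form $\bar\nform$ on the quotient $\LL_\Qrank^\perp/\LL_\Qrank$ satisfying $\nform(\xx) = \bar\nform(\pi(\xx))$ for $\xx\in\LL_\Qrank^\perp$, where $\pi$ is the projection. I claim $\bar\nform$ has $\Q$-rank $0$ with respect to the $\Q$-structure induced by the lattice $\pi(\Lambda_*\cap\LL_\Qrank^\perp)$: any nonzero rational $\bar\nform$-isotropic vector $\bar\qq$ lifts to a rational $\qq\in\LL_\Qrank^\perp$ with $\nform(\qq)=0$ and $\qq\notin\LL_\Qrank$, whence $\LL_\Qrank+\Q\qq$ is a rational totally isotropic subspace of dimension $\Qrank+1$, contradicting the maximality of $\LL_\Qrank$ (which holds because $\LL_\Qrank$ is rational for $\Lambda_*$ and has dimension equal to the invariant $\Q$-rank of $\nform$). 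In particular $\bar\nform$ does not vanish on any nonzero element of the lattice $\pi(\Lambda_*\cap\LL_\Qrank^\perp)$.

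To finish, I would observe that $g_\tt$ preserves $\LL_\Qrank^\perp$ and, since it acts by $e^{-t}$ on $\LL_\Qrank$ and trivially on the complementary coordinates, descends to the identity on $\LL_\Qrank^\perp/\LL_\Qrank$. Hence $\pi(\Lambda_t\cap\LL_\Qrank^\perp) = \pi(g_\tt(\Lambda_*\cap\LL_\Qrank^\perp)) = \pi(\Lambda_*\cap\LL_\Qrank^\perp)$, so $\pi(\pp)$ lies in this fixed lattice; on the other hand $\bar\nform(\pi(\pp)) = \nform(\pp) = 0$, so the $\Q$-rank $0$ statement forces $\pi(\pp)=0$, i.e.\ $\pp\in\LL_\Qrank$. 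The main subtlety is in the second step: after the earlier reduction to $\Rrank$-normalized form the lattice $\Lambda_*$ is no longer commensurable with $\Z^{d+1}$, so one must be careful to work with the $\Q$-structure defined by $\Lambda_*$ itself rather than the standard one, and exploit that $\LL_\Qrank$ remains rational in that structure precisely because $\Lambda_*\cap\LL_\Qrank = \Z^{d+1}\cap\LL_\Qrank$.
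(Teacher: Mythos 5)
Your proof is correct and follows the paper's argument in substance: the same integrality-plus-smallness computation (pairing $\pp$ against the small lattice vectors $e^{-t}\ee_i\in\Lambda_t$ and using $2B_\nform(\Lambda_t,\Lambda_t)\subset\Z$) shows $B_\nform(\pp,\ee_i)=0$ for $i<\Qrank$, and the same appeal to the maximality of $\Qrank$ finishes. The only difference is cosmetic: the paper concludes directly that $\LL_\Qrank+\R\pp$ is a totally isotropic $\Lambda_t$-rational subspace, which would have dimension $\Qrank+1$ if $\pp\notin\LL_\Qrank$, whereas you repackage this via the quotient $\LL_\Qrank^\perp/\LL_\Qrank$ and the observation that $g_\tt$ descends to the identity there --- a valid but unnecessary detour, since $g_\tt\in\orth(\nform)$ already carries totally isotropic $\Lambda_*$-rational subspaces to totally isotropic $\Lambda_t$-rational ones of the same dimension.
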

\begin{subproof}
For each $i = 0,\ldots,\Qrank - 1$, we have $\ee_i\in\Z^{d + 1}\cap\LL_\Qrank \subset\Lambda_*\cap L_\nform$, and thus $g_\tt(\ee_i) = e^{-t}\ee_i \in\Lambda_t\cap L_\nform$. Since $\Lambda_t$ is $\nform$-arithmetic, we have
\begin{equation}
\label{BinZ}
B_\nform\big(\pp,g_\tt(\ee_i)\big) \in \frac{\Z}{2}\cdot
\end{equation}
On the other hand,
\[
\big|B_\nform\big(\pp,g_\tt(\ee_i)\big)\big| \leq \|\nform\| \cdot \|\pp\| \cdot \|g_\tt(\ee_i)\| < \|\nform\| \left(\frac{e^t}{2\|\nform\|}\right) e^{-t} = \frac12\cdot
\]
Combining with \eqref{BinZ}, we see that
\[
B_\nform\big(\pp,g_\tt(\ee_i)\big) = e^{-t} B_\nform(\pp,\ee_i) = 0.
\]
It follows that the $\Lambda_t$-rational subspace $\LL_\Qrank + \R\pp$ is totally isotropic, and so by the maximality of $\Qrank$, we have $\pp\in\LL_\Qrank$.
\end{subproof}
Now let $\matrix_t\in\orth(\EE_\Rrank)$ satisfy $\matrix_t(\Gamma_t)\cap\LL_1 = \{\0\}$, where $\EE_\Rrank$ is the Euclidean metric on $\R^\Rrank$. Such a choice is possible since by assumption $\Rrank > 1$. Let $g_{\matrix_t}$ be given by \eqref{gmatrix}, so that $g_{\matrix_t}\in\orth(\nform)\cap \orth(\EE_{d + 1})$. Let $\Lambda_t' = g_{\matrix_t}\Lambda_t$.

Let
\[
\gamma = \left[\begin{array}{ccc}
&& 1\\
& I_{d - 1} &\\
1 &&
\end{array}\right],
\]
so that
\[
\gamma(\FF_C) = \{\Lambda\in\Omega_{\nform,\Lambda_*} : \exists\, \qq\in \Lambda\cap L_\nform\smallsetminus\{\0\} \;\; \|\qq\| \leq C, \;\; W(\qq) \leq \sqrt{C L(\qq)} \}.
\]
We claim that for all $C > 0$, there exists $t\geq 0$ such that $\Lambda_t'\notin\gamma(\FF_C)$; in particular $\FF_C\propersubset \Omega_{\nform,\Lambda_*}$. Indeed, fix $C$ and $t$, and suppose we have $\qq = g_{\matrix_t}(\pp)\in\Lambda_t'\cap L_\nform\smallsetminus\{\0\}$ with $\|\pp\| \asymp_\times \|\qq\|\leq C$ and $W(\qq) \leq \sqrt{C L(\qq)}$. If $t$ is large enough (depending on $C$), then by Claim \ref{claimLambdat} we have $\pp\in\Gamma_t$ and thus $\qq\in\LL_\Rrank\smallsetminus\LL_1$. In particular, $L(\qq) = 0$ but $W(\qq) > 0$. This is a contradiction. Thus $\FF_C\propersubset \Omega_{\nform,\Lambda_*}$ for all $C > 0$, so (B$'''$) fails.
\end{subproof}
This completes the proof of Theorem \ref{theoremdirichletquadratic2}.
\end{proof}

We complete the proof of Theorem \ref{theoremdirichletquadratic} by demonstrating the forwards direction of (ii).

\begin{proof}[Proof of Theorem \ref{theoremdirichletquadratic}, forwards direction of \text{(ii)}]\label{pageuniformlydirichlet}
Let $V_\Q$ be a maximal isotropic $\Q$-subspace of $\R^{d + 1}$, and let $V_\R$ be a maximal isotropic $\R$-subspace of $\R^{d + 1}$ such that $V_\Q\propersubset V_\R$. Then $[V_\Q]\propersubset [V_\R]$. By contradiction, suppose that $\psi_1$ is uniformly Dirichlet. This is equivalent to the existence of a constant $C > 0$ such that for all $[\xx]\in M_\form$, there exist infinitely many $\rr\in\Z^{d + 1}\cap L_\form$ satisfying
\begin{equation}
\label{drLxC}
\dist(\rr,\LL_{[\xx]}) \leq C,
\end{equation}
where $\LL_{[\xx]} = \R\xx$.

Fix $[\xx]\in [V_\R]\smallsetminus [V_\Q]\subset M_\form$. Since $[\xx]\notin [V_\Q]$, only finitely many $\rr\in V_\Q\cap\Z^{d + 1}$ can satisfy \eqref{drLxC}, so there exists $\rr\in\Z^{d + 1}\cap L_\form\smallsetminus V_\Q$ satisfying \eqref{drLxC}. Let $\xx$ be the projection of $\rr$ onto $\LL_{[\xx]}$, so that
\begin{equation}
\label{xrbound}
\|\xx - \rr\| = \dist(\rr,\LL_{[\xx]}) \leq C.
\end{equation}
Let $\bb_1,\ldots,\bb_\Qrank$ be a basis of $V_\Q\cap\Z^{d + 1}$. Since $V_\R$ is totally isotropic and $\xx\in V_\R$, we have $B_\form(\xx,\bb_i) = 0$ for all $i = 1,\ldots,\Qrank$. Thus
\begin{align*}
|B_\form(\rr,\bb_i)| &= |B_\form(\xx - \rr,\bb_i)|\\
&\leq \|B_\form\|\cdot\|\xx - \rr\|\cdot\|\bb_i\| \leq N := \left\lceil C\|B_\form\|\max_{i = 0}^{\Qrank - 1}\|\bb_i\| \right\rceil,
\end{align*}
and so since $\form$ is $\Z^{d + 1}$-arithmetic,
\[
\zz := \big(B_\form(\rr,\bb_i)\big)_{i = 0}^{\Qrank - 1} \in \{-N,\ldots,N\}^\Qrank.
\]
On the other hand, since $\rr\notin V_\Q$, the maximality of $V_\Q$ implies that $V_\Q + \R\rr$ is not isotropic (it is clearly a $\Q$-subspace). Thus $B_\form(\rr,\bb_i)\neq 0$ for some $i = 1,\ldots,\Qrank$, i.e.
\[
\zz\neq\0.
\]
Choose real numbers $c_1,\ldots,c_\Qrank$ linearly independent over $\Q$, and let $\ss = \sum_{i = 1}^\Qrank c_i\bb_i\in V_\Q$. Let $[\xx_m]\tendsto m[\ss]$ with $[\xx_m]\in [V_\R]\smallsetminus [V_\Q]$. For each $m$, let $\rr_m$, $\xx_m$, and $\zz_m$ be defined as above, with the additional stipulation that $\|\rr_m\| \geq m$ (this is possible since there were infinitely many possible choices for $\rr_m$). Then for each $m\in\N$ we have
\[
|B_\form(\rr_m,\ss)| = |\zz_m\cdot\cc|,
\]
where $\cc = (c_i)_{i = 0}^{\Qrank - 1}$. Thus
\[
|B_\form(\rr_m,\ss)| \in \big\{|\zz\cdot\cc| : \zz\in\{-N,\ldots,N\}^\Qrank\smallsetminus\{\0\}\big\},
\]
which implies $|B_\form(\rr_m,\ss)|\geq\epsilon$ for some $\epsilon > 0$ independent of $m$. Let $t_m = \pm\|\xx_m\|/\|\ss\|$; since $[\xx_m]\tendsto m [\ss]$ we have
\[
\left\|\ss - \frac{\xx_m}{t_m}\right\|\tendsto m 0
\]
after choosing the appropriate $\pm$ signs to define the $t_m$s. Now
\begin{align*}
\epsilon t_m &\leq |B_\form(\rr_m,t_m\ss)|\\
&= |B_\form(\rr_m - \xx_m,t_m\ss)| \since{$\xx_m,\ss\in V_\R$}\\
&\leq |B_\form(\rr_m - \xx_m,t_m\ss - \xx_m)| + |B_\form(\rr_m - \xx_m,\xx_m)| \noreason\\
&= |B_\form(\rr_m - \xx_m,t_m\ss - \xx_m)| + \frac{1}{2}\big|\form(\rr_m) - \form(\xx_m) - \form(\rr_m - \xx_m)\big| \noreason\\
&= |B_\form(\rr_m - \xx_m,t_m\ss - \xx_m)| + \frac{1}{2}\big|\form(\rr_m - \xx_m)\big| \since{$\rr_m,\xx_m\in L_\form$}\\
&\leq \|\form\|\cdot\|\rr_m - \xx_m\|\left[\|t_m\ss - \xx_m\| + \frac{1}{2}\|\rr_m - \xx_m\|\right] \noreason\\
&\leq C\|\form\|(C/2 + \|t_m\ss - \xx_m\|). \by{\eqref{xrbound}}
\end{align*}
Dividing by $t_m$ we have
\[
\epsilon \lesssim_\times \frac{1}{t_m} + \left\|\ss - \frac{\xx_m}{t_m}\right\| \tendsto m 0,
\]
a contradiction.
\end{proof}

\begin{remark}
\label{remarkquadraticsingular}
The hypothesis of nonsingularity can be dropped from parts (i) and (ii) of Theorem \ref{theoremdirichletquadratic}, if the hypothesis that $\P_\Q^d\cap M_\form\neq\emptyset$ is replaced by the stronger hypothesis that $\Z^{d + 1}$ intersects $L_\form\smallsetminus(\R^{d + 1})^\perp$.
\end{remark}
\begin{proof}
Any singular quadratic form is conjugate to a quadratic form $\form:\R^{d + 1}\to\R$ of the form
\[
\form(x_0,\ldots,x_d) = \w\form(x_0,\ldots,x_m),
\]
where $\w\form$ is a nonsingular quadratic form on $\R^{m + 1}$ for some $m < d$. In particular, $L_\form = L_{\w\form}\times\R^{d - m}$. Note that the hypothesis on $\form$ guarantees that $\P_\Q^m\cap M_{\w\form}\neq\emptyset$.

Fix $[\xx]\in M_\form$ and a representative $\xx = (\xx^{(1)},\xx^{(2)})\in L_\form$. Suppose first that $\xx^{(1)}\neq 0$, and let $\rr^{(1)}\in \Z^{m + 1}\cap L_{\w\form}$ be such that
\begin{equation}
\label{r1x1}
\dist(\rr^{(1)},\R\xx^{(1)})\leq C_{[\xx^{(1)}]}.
\end{equation}
Then there exists $t\in\R$ so that $\|\rr^{(1)} - t\xx^{(1)}\| \leq C_{[\xx^{(1)}]}$. Choose $\rr^{(2)}\in\Z^{d - m}$ so that $\|\rr^{(2)} - t\xx^{(2)}\| \leq 1$. Then
\begin{equation}
\label{quadraticsingular}
\|(\rr^{(1)},\rr^{(2)}) - t\xx\| \leq C_{[\xx^{(1)}]} + 1.
\end{equation}
Now by Theorem \ref{theoremdirichletquadratic}(i) applied to $\w\form$, there exist infinitely many $\rr^{(1)}\in\Z^{m + 1}\cap L_{\w\form}$ satisfying \eqref{r1x1}; thus there exist infinitely many pairs $(\rr^{(1)},\rr^{(2)})$ satisfying \eqref{quadraticsingular}.

On the other hand, if $\xx^{(1)} = 0$, let $\rr^{(1)} = 0$ and for each $t\in\R$ choose $\rr^{(2)}$ satisfying $\|\rr^{(2)} - t\xx^{(2)}\| \leq 1$; then \eqref{quadraticsingular} holds. Letting $t\to\infty$, there exist infinitely many pairs $(\rr^{(1)},\rr^{(2)})$ satisfying \eqref{quadraticsingular}.

Finally, if $\Qrank = \Rrank$, then by using Theorem \ref{theoremdirichletquadratic}(ii) in place of Theorem \ref{theoremdirichletquadratic}(i), the above argument shows that the implied constant is independent of $\xx$.
\end{proof}

\begin{remark}
The same technique cannot be used to remove the nonsingularity hypothesis from Theorem \ref{theoremkhinchinquadratic} below. Indeed, if we suppose that $[\xx^{(1)}]\in \A_{\psi,M_{\w\form}}$ for some $\psi$, then $C_{[\xx^{(1)}]}$ will be replaced by $C H_\std([\rr])\psi\circ H_\std([\rr])$ in \eqref{quadraticsingular}, but the second term (namely $1$) will not be changed. Thus the overall bound is no better than if we did not know that $[\xx^{(1)}]\in \A_{\psi,M_{\w\form}}$.
\end{remark}

\begin{remark}
\label{remarknondense}
The hypothesis that $M_\form$ is rational certainly cannot be dropped from Theorem \ref{theoremdirichletquadratic}. Indeed, Theorem \ref{theoremdirichletquadratic}(i) implies that the set $\P_\Q^d\cap M_\form$ is dense in $M_\form$ whenever $M_\form$ is a nonsingular rational quadric hypersurface in $\P_\R^d$ satisfying $\P_\Q^d\cap M_\form\neq \emptyset$. By contrast, if $\form$ is a quadratic form which is not a scalar multiple of any quadratic form with integer coefficients, then $\P_\Q^d\cap M_\form$ is not dense in $M_\form$.
\end{remark}
\begin{proof}
Let $\pi:\R\to\Q$ be a $\Q$-linear map, and let $\nform:\R^{d + 1}\to\R$ be the unique quadratic form so that $\nform = \pi\circ\form$ on $\Q^{d + 1}$. Then for $\rr\in\Q^{d + 1}$, $\form(\rr) = 0$ implies $\nform(\rr) = 0$; thus $\P_\Q^d\cap M_\form\subset M_{\nform}$. If $\P_\Q^d\cap M_\form$ is dense in $M_\form$, then $M_\form\subset M_{\nform}$, and so $\form$ is a scalar multiple of $\nform$. But $\nform$ has rational coefficients, and is therefore a scalar multiple of a quadratic form with integer coefficients.
\end{proof}

\section{Khintchine-type theorems and counting of rational points}
\label{sectionkhinchinquadratic}

Recall that in the classical setting, the convergence case of Khintchine's theorem follows directly from the Borel--Cantelli lemma combined with estimates for the number of rational points whose height is less than a fixed number $\numberq$. So in the case of intrinsic approximation one must find upper bounds on expressions of the form
\[
N_M(\numberq) := \#\big\{[\rr]\in \P_\Q^d\cap M: H_\std([\rr])\leq \numberq\big\},
\]
where $M\subset \P_\R^d$ is an arbitrary manifold. Such bounds have been considered extensively in the case where $M$ is algebraic in \cite{Browning_book}. We will pay special attention to the following result due to D.\ R.\ Heath-Brown. Recall that $\form$ is a rational quadratic form in $d+1$ variables, 
$\dim(M_\form) = d-1$, and $\form_0$ is the exceptional quadratic form on $\R^4$ defined in \eqref{form0}.

\begin{theorem}[{\cite[Theorems 5, 6, 7,  8 and remarks afterwards]{Heath-Brown_quadratic}}]
\label{theoremheathbrown}
Let $M_\form\subset\P_\R^d$ be a nonsingular rational quadric hypersurface 
with $\Qrank \ge 1$. Then
\begin{equation}
\label{heathbrown}
N_{M_\form}(\numberq) \asymp_\times
\begin{cases}
\numberq^{d-1} & \form\not\sim\form_0;\\
\numberq^2\log \numberq & \form\sim\form_0.
\end{cases}
\end{equation}
\end{theorem}

In order to clarify the relation between the above paraphrased version of Heath-Brown's results with  with the original theorems,  we make the following comments:
\begin{itemize}
\item[1.]  \cite[Theorems 5, 6, 7, and 8]{Heath-Brown_quadratic} provide asymptotics with an error term for the weighted sum 
$$
N(
F, w) =
N(F, w, P) :=
\sum_{\xx\in\Z^{d+1} \cap F^{-1}(0)}
w(P^{-1}\xx),$$  where $F$ is a rational quadratic form in $d+1$ variables, and $w$  a function on $\R^{d+1}$ which is required to be $\CC^\infty$. However to estimate $N_{M_\form}(\numberq)$ 
one must let $w = \one_{B(\0,1)}$. Since $w_0 = \one_{B(\0,1)}$ can be approximated from above and below by $\CC^\infty$ functions $w_n$ in a way such that the singular integrals $\sigma_\infty(F,w_n)$ approach $\sigma_\infty(F,w_0) \in (0,\infty)$ as $n\to\infty$, \cite[Theorems 5, 6, 7, and 8]{Heath-Brown_quadratic} will still hold for $w_0 = \one_{B(\0,1)}$, but  without an estimate on the error term; namely, we have
\[
\lim_{P\to\infty}\frac{N(F,w_0,P)}{\text{leading term}} = 1
\]
for each result in \cite{Heath-Brown_quadratic}. In Theorem \ref{theoremheathbrown} we have stated only the weaker conclusion that the left hand side is bounded from above and below (in limsup and liminf respectively).
\item[2.] According to \cite[Theorems 5, 6, 7, and 8]{Heath-Brown_quadratic}, the number of  integer vectors on quadric hypersurfaces $\form^{-1}(0)$ of $\R^{d+1}$ inside the ball of radius $\numberq$ is up to a multiplicative constant asymptotically equal to 
\begin{equation}
\label{heathbrown}
\begin{cases}
\numberq^{d-1} & \text{ if }d\ge 4\hskip .795in\text{ (Theorem 5)};\\
\numberq^2 & \text{ if }d= 3\text{ and }\form\not\sim\form_0\text{ (Theorem 6)};\\
\numberq^2\log \numberq & \text{ if }d = 3\text{ and }\form\sim\form_0 \text{ (Theorem 7)};
\\
\numberq \log \numberq & \text{ if }d= 2\hskip .8in \text{ (Theorem 8).}
\end{cases}
\end{equation}
Note however that our goal is to count rational points on  $M_\form$, which correspond to \emph{primitive} integer vectors on $Q^{-1}(0)$. The relation between counting primitive vectors and counting all lattice vectors is clarified in \cite{Heath-Brown_quadratic} after the theorems are stated. In particular, Theorems 5, 6 and 7 lead to equivalent results for
counting of primitive vectors,
which the only change is that the leading term is divided by
a constant. However the situation with Theorem 8 is different: in view of \cite[Corollary 2]{Heath-Brown_quadratic}, for the count of primitive integer vectors the factor $\log\numberq$ in the last line of \eqref{heathbrown} disappears.
\item[3.] In \cite{Heath-Brown_quadratic}, it is shown that the modified singular series $\sigma^*$ is positive and finite if and only if the equation $\form = 0$ has nontrivial solutions in every $p$-adic field. Since the forms we deal with satisfy $\P_\Q^d\cap M_\form\neq\emptyset$, the equation $\form = 0$ has nontrivial solutions over $\Q$, and so certainly over every $p$-adic field.
\end{itemize}

For any nonincreasing function $\psi:\N\to(0,\infty)$, we may write
\[
\A_{M_\form} (\psi) \subset \limsup_{\substack{\numberq\to\infty \\ \numberq\in 2^\N}} \bigcup_{\substack{[\rr]\in \P_\Q^d\cap M_\form \\ H_\std([\rr])\leq 2\numberq}} B\big([\rr],\psi(\numberq)\big).
\]
Combining with \eqref{heathbrown} and using the Hausdorff--Cantelli lemma \cite[Lemma 3.10]{BernikDodson}, one can immediately deduce the following corollary:

\begin{corollary}
\label{corollarykhinchinquadratic}
Let $M_\form\subset\P_\R^d$ be a nonsingular rational quadric hypersurface 
with $\Qrank \ge 1$. Fix a positive $s\leq {d-1}$, and let $\psi:\N\to(0,\infty)$ be nonincreasing. If the series
\begin{equation}
\label{convergence3}
\begin{cases}
\sum_{\numberq\in 2^\N} \numberq^{d-1} \psi^s(\numberq) & \form\not\sim\form_0 \\
\sum_{\numberq\in 2^\N} \numberq^2\log \numberq \psi^s(\numberq) & \form\sim\form_0
\end{cases}
\end{equation}
converges, then $\HH^s\big(\A_{M_\form} (\psi)\big) = 0$.
\end{corollary}
The case $s = d-1$ corresponds to Lebesgue measure.

\smallskip
Based on the above, one would expect that Khintchine's theorem for quadric hypersurfaces would state that the converse of Corollary \ref{corollarykhinchinquadratic} holds when $s = d-1$ (possibly with some additional assumptions on $\psi$). However, we instead have the following:


\begin{theorem}[Khintchine-type theorem for quadric hypersurfaces]
\label{theoremkhinchinquadratic}
Let $M_\form\subset\P_\R^d$ be a nonsingular rational quadric hypersurface with $\Qrank \ge 1$. Fix $\psi:\N\to(0,\infty)$, and suppose that $\psi$ is regular (see Definition \ref{definitionregular})
and that the function $q\mapsto q\psi(q)$ is nonincreasing.
Then $\A_{M_\form}(\psi)$ has full Lebesgue measure 
if 
the series
\begin{equation}\label{loglog}
\begin{cases}
\sum_{\numberq\in 2^\N} \numberq^{d-1} \psi^{d-1}(\numberq) & \form\not\sim\form_0 \\
\sum_{\numberq\in 2^\N} \numberq^2\log\log \numberq\, \psi^2(\numberq) & \form\sim\form_0
\end{cases}
\end{equation}
diverges; otherwise, $\A_{M_\form}(\psi)$ is Lebesgue null.
\end{theorem}

In other words, whenever $\form \not\sim \form_0$, the above intuition is correct: Theorem \ref{theoremkhinchinquadratic} then says that when $\form \not\sim \form_0$, the converse to the standard Borel--Cantelli argument holds for the collection of sets defining $\A_{M_\form}(\psi)$. On the other hand, the series \eqref{loglog} does not agree with \eqref{convergence3} when $\form \sim \form_0$, and so philosophically there is some nontrivial relation between the sets appearing in the definition of $\A_{M_{\form_0}}(\psi)$. A description of this relation is given in Section \ref{sectionspecialtype} (see in particular Remark \ref{remarknontrivialrelation}), where an elementary proof of the convergence case of Theorem \ref{theoremkhinchinquadratic} for the manifold $M_{\form_0}$ is given. 


\smallskip

Using the Mass Transference Principle of Beresnevich and Velani \cite[Theorem 2]{BeresnevichVelani}, one can immediately deduce the following:\Footnote{The dimension $s > 0$ may be replaced by a dimension function $f$; we omit the statement for brevity.}
\begin{theorem}[The Jarn\'ik--Besicovitch theorem for quadric hypersurfaces]
\label{theoremjarnikquadratic}
Fix $0 < s < d-1$. Let $\psi:\N\to(0,\infty)$ be regular, and suppose that $q\mapsto q^{d-1}\psi^s(q)$ is nonincreasing. If the series
\begin{equation}
\label{loglog2}
\begin{cases}
\sum_{\numberq\in 2^\N} \numberq^{d-1} \psi^s(\numberq) & \form\not\sim\form_0 \\
\sum_{\numberq\in 2^\N} \numberq^2\log\log \numberq\, \psi^s(\numberq) & \form\sim\form_0
\end{cases}
\end{equation}
diverges, then $\HH^s\big(\A_{M_\form} (\psi)\big) = \infty$.
\end{theorem}
This, in particular, computes the Hausdorff dimension of the set of $\psi_\cee$-approximable points of $M_\form$, see
\eqref{jarnikquadratic}.

It follows from Corollary \ref{corollarykhinchinquadratic} 
that for $\form \not\sim \form_0$, convergence of \eqref{loglog2} implies $\HH^s\big(\A_{M_\form}(\psi)\big) = 0$. However, in the case of the exceptional quadratic form $\form_0$, there is a discrepancy between \eqref{loglog2} and the series \eqref{convergence3} appearing in Corollary \ref{corollarykhinchinquadratic}, and the former may converge while the latter diverges. In this case, we do not know the value of $\HH^s\big(\A_{M_\form}(\psi)\big) $. However, the coarser Hausdorff dimension result \eqref{jarnikquadratic} holds regardless.
For reasons explained in Remark \ref{remarknontrivialrelation}, the authors conjecture that Theorem \ref{theoremjarnikquadratic} remains true if \eqref{loglog2} is replaced by \eqref{convergence3}.

Note also that
if $q^2\psi(q)\to 0$, then all $\psi$-good rational approximations of points in $M_\form$ are intrinsic, meaning that $\A_{M_\form}(\psi) = \A_d(\psi)\cap M_\form$ \cite[Lemma 4.1.1]{Drutu}. Consequently, for such $\psi$, Theorem \ref{theoremjarnikquadratic} may be rephrased in terms of ambient approximation. The rephrased result has been proven in the case $\form_\aff(\xx) = x_1^2 + x_2^2$ by Dickinson and Dodson \cite[Theorem 1]{DickinsonDodson}, and in the case where $\form\not\sim\form_0$ by Dru\c tu \cite[Theorem 4.5.7]{Drutu}.\Footnote{Although the hypothesis $\form\not\sim\form_0$ does not appear explicitly in Dru\c tu's theorem, it is required by her standing assumption that the lattice $\Gamma$ is irreducible (cf.\ \cite[\62.5,\64.5]{Drutu}), since when $\form\sim\form_0$, $\Gamma$ is reducible (see p.\pageref{pagerefpiz}). \label{footnotedrutu}}

Note that Theorem \ref{theoremkhinchinquadratic} is analogous to the main result of \cite{GorodnikShah}, the difference being that we are considering intrinsic approximation and the authors of \cite{GorodnikShah} are considering a specific type of extrinsic approximation. Also, it is likely that the techniques of  Dru\c tu \cite{Drutu} can be used to prove Theorem \ref{theoremkhinchinquadratic} in the case $\form\not\sim\form_0$ via the use of ubiquitous systems as considered in \cite{BDV}. 
On the other hand, Dru\c tu's methods do not apply to the exceptional quadric hypersurface $M_{\form_0}$ (cf. Footnote \ref{footnotedrutu}). 
We opt to use the machinery of Kleinbock and Margulis \cite{KleinbockMargulis} to establish Theorem \ref{theoremkhinchinquadratic}.

Theorem \ref{theoremkhinchinquadratic} can be deduced directly from the following theorem together with the correspondence principle (Corollary \ref{corollarycorrespondencesets} and Observation \ref{observationlebesgue}). As before, details are left to the reader.\Footnote{It is helpful to notice that the convergence/divergence of the series \eqref{loglog} is unaffected by the substitution $\psi\mapsto C\psi$, where $C > 0$ is a constant. Also, the fact that the assumption $q\psi(q) \to 0$ appears in Corollary \ref{corollarycorrespondencesets} but not Theorem \ref{theoremkhinchinquadratic} can be remedied by the observation that $\BA_{M_\form}$ has measure zero, which follows either from applying Theorem \ref{theoremkhinchinquadratic} to any function $\psi$ satisfying the hypotheses and such that the series \eqref{loglog} diverges, or by the argument at the end of Section \ref{sectioncorrespondence}.}

\begin{theorem}
\label{theoremkhinchinquadratic2}
Fix $d\geq 2$, let $\nform$ be a nonsingular $\Qrank$-normalized quadratic form on $\R^{d + 1}$, and fix $\Lambda_*\in\Omega_\nform$ commensurable to $\Z^{d + 1}$. Let $\psi:(0,\infty)\to(0,\infty)$ be a continuous function, and suppose that $q\mapsto q\psi(q)$ is nonincreasing. 
Let $r_\psi:(0,\infty)\to(0,\infty)$ and $\A_\nform(\psi) = \A(r_\psi,\Omega_{\nform,\Lambda_*})$ be defined as in Corollary \ref{corollarycorrespondencesets}, see  \eqref{defarpsi}. Then $\A_\nform(\psi)$ has full measure with respect to $\mu_{\nform,\Lambda_*}$ if 
\eqref{loglog} diverges; otherwise, $\A_\nform(\psi)$ is null with respect to $\mu_{\nform,\Lambda_*}$.
\end{theorem}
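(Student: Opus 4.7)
The plan is to reinterpret $\A_\nform(\psi)$ as a dynamical $\limsup$ set and to apply \cite[Theorem 1.7]{KleinbockMargulis}, a Borel--Cantelli-type lemma for sufficiently mixing homogeneous flows. Writing $A_s := \{\Lambda \in \Omega_{\nform,\Lambda_*} : \rho_\nform(\Lambda) \leq e^{-s}\}$ and $s(t) := -\log r_\psi(t)$, one has
\[
\A_\nform(\psi) = \{\Lambda : g_t\Lambda \in A_{s(t)} \text{ for an unbounded set of } t \geq 0\}.
\]
Since $\mu_{\nform,\Lambda_*}$ is $g_t$-invariant, $\mu_{\nform,\Lambda_*}(g_t^{-1}A_{s(t)}) = \mu_{\nform,\Lambda_*}(A_{s(t)})$, so the convergence/divergence question is governed by a sum $\sum_n \mu_{\nform,\Lambda_*}(A_{s(t_n)})$ evaluated at a suitably chosen discrete sequence of times $t_n$.

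The convergence direction will follow from the classical Borel--Cantelli lemma; the divergence direction from \cite[Theorem 1.7]{KleinbockMargulis}, whose mixing hypothesis is supplied by the semisimplicity of $\orth(\nform)$ via Howe--Moore (with a modest adaptation in the exceptional case, where $\orth(\form_0)$ is locally isogenous to $\SL_2(\R)\times\SL_2(\R)$ and mixing must be verified on each simple factor). The combination of the two directions requires picking the $t_n$ compatibly with the dyadic decomposition in \eqref{loglog}: set $t_n := -\log\psi(2^n)$ so that $r_\psi(t_n) \asymp 2^n\psi(2^n)$; hypothesis (I) ensures the spacings $t_{n+1}-t_n$ are uniformly bounded (and is where the continuity assumption is actually used), and therefore that $\int \mu_{\nform,\Lambda_*}(A_{s(t)})\,dt \asymp \sum_n \mu_{\nform,\Lambda_*}(A_{s(t_n)})$ up to multiplicative constants.

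The content of the theorem thus reduces entirely to the Haar-measure estimate for the cusp sets $A_s$, which will be established separately as Proposition \ref{propositionkDL} using the generalized Iwasawa decomposition and Leuzinger's reduction theory. Given that estimate, matching the Borel--Cantelli sum with \eqref{loglog} is a direct change-of-variables computation using $s(t_n) \asymp \log(1/(2^n\psi(2^n)))$, and the difference between the two cases of \eqref{loglog} is precisely the difference between the cusp geometries of $\Omega_{\nform,\Lambda_*}$ for $\form \not\sim \form_0$ (a single cusp of Euclidean horospherical dimension $k$, yielding $\mu_{\nform,\Lambda_*}(A_s) \asymp e^{-ks}$) and for $\form \sim \form_0$ (reducible, product-of-rank-one cusp structure producing an additional logarithmic factor in $s$).

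The main obstacle is the exceptional-case measure estimate (Proposition \ref{propositionkDL}) together with its interplay with \cite[Theorem 1.7]{KleinbockMargulis}: the reducibility of $\orth(\form_0)$ means that the cusp of $\Omega_{\nform,\Lambda_*}$ is not of the standard rank-one type used most directly in the Kleinbock--Margulis machinery, and a careful integration over the joint Weyl-chamber direction is required to produce the polynomial correction whose logarithm manifests as the $\log\log q$ factor in the second line of \eqref{loglog}. Verifying that Theorem 1.7 of \cite{KleinbockMargulis} still applies when the ambient group is not simple---by decomposing the $g_t$-action along the two simple factors and invoking mixing on each---is a secondary technical step, but conceptually subordinate to the cusp-measure computation deferred to the next section.
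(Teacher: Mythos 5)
Your overall architecture coincides with the paper's: reinterpret $\A_\nform(\psi)$ as a shrinking-target limsup set for the flow $(g_t)$, invoke \cite[Theorem 1.7]{KleinbockMargulis} (after checking its hypothesis on each simple factor of $\orth(\nform)_0$ in the reducible case $\form\sim\form_0$), and reduce everything to the cusp-measure estimate $\Phi_\Delta(z)\asymp_\times e^{-(d-1)z}z^n$ of Proposition \ref{propositionkDL}. The genuine gap is your claim that matching the resulting Borel--Cantelli sum with \eqref{loglog} is ``a direct change-of-variables computation.'' In the exceptional case it is not. The change of variables turns the dynamical sum into $\int r_\psi(t)^k\big(-\log r_\psi(t)\big)^n\,\dee t$, whereas \eqref{loglog} transforms into $\int r_\psi(t)^k\log^n\log\big(\psi^{-1}(e^{-t})\big)\,\dee t$. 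The two integrands are termwise comparable only when $\log\log\psi^{-1}(e^{-t})\asymp_{\plus,\times}-\log r_\psi(t)$, and this fails for admissible $\psi$: for $\psi(q)=q^{-2}$ one gets $-\log r_\psi(t)=t/2$ but $\log\log\psi^{-1}(e^{-t})\asymp_\plus\log t$. So for $n=1$ the two series have genuinely different terms, and proving that they nonetheless converge or diverge together requires a separate argument: one sandwiches $\psi$ against the critical family $\psi_{1,c}$ defined by $r_{\psi_{1,c}}(t)=t^{-c}$ (for which the comparability does hold), and uses the monotonicity of $q\mapsto q\psi(q)$ to control the excursions of $\psi$ above $\psi_{1,c_2}$ and below $\psi_{1,c_1}$ near the convergence boundary $c=1/k$. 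Without this step your proof is complete only for $\form\not\sim\form_0$, i.e.\ precisely not in the case where the theorem departs from the naive Borel--Cantelli prediction.

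A secondary error: hypothesis (I) does not give uniformly bounded spacings for $t_n=-\log\psi(2^n)$. Monotonicity of $q\psi(q)$ yields $t_{n+1}-t_n\geq\log 2$ but no upper bound; an upper bound is equivalent to $\psi(2^{n+1})\gtrsim_\times\psi(2^n)$, i.e.\ to regularity of $\psi$, which is not among the hypotheses of this theorem. The discretization should instead be run at integer times, comparing $z(t)$ with $z(\lfloor t\rfloor)$ via the facts that $\Delta=-\log\mindist$ is $1$-Lipschitz, $z$ is nondecreasing, and $\Phi_\Delta$ is regular (the last being a consequence of Proposition \ref{propositionkDL}); this is also where the stated monotonicity and continuity hypotheses on $\psi$ actually enter.
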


The proof of Theorem \ref{theoremkhinchinquadratic2} will occupy Sections \ref{sectionkhinchinproof} and \ref{sectionreductiontheory}. 

\section{Proof of Theorem \ref{theoremkhinchinquadratic2} modulo a volume computation}
\label{sectionkhinchinproof}

In the current section, we reduce Theorem \ref{theoremkhinchinquadratic2} to a statement about the asymptotic behavior of the measure $\mu_{\nform,\Lambda_*}$. Namely, we will deduce Theorem \ref{theoremkhinchinquadratic2} as a corollary of one of the main results of \cite{KleinbockMargulis}, which we now recall.

\begin{definition}
Let $(X,\dist_X)$ be a metric space, let $\mu$ be a (finite Borel) measure on $X$, and let $\Delta:X\to\R$ be a continuous function. For each $z\in\R$ let
\[
S_{\Delta,z} = \{x\in X: \Delta(x) \geq z\} \text{ and }\Phi_\Delta(z) = \mu(S_{\Delta,z}).
\]
$\Phi_\Delta$ is called the \emph{tail distribution function} of $\Delta$. We say that $\Delta$ is \emph{distance-like} if
\begin{itemize}
\item[(I)] $\Delta$ is uniformly continuous, and
\item[(II)] $\Phi_\Delta$ is regular (see Definition \ref{definitionregular}).
\end{itemize}
\end{definition}

Let $G$ be a connected semisimple center-free Lie group without compact factors, and let $\Gamma\subgp G$ be a lattice. By \cite[Theorem 5.22]{Raghunathan}, one can find connected normal subgroups $G_1,\ldots,G_\ell\leq G$ such that $G$ is the direct product of $G_1,\ldots,G_\ell$, $\Gamma_i := G_i\cap\Gamma$ is an irreducible lattice in $G_i$ for each $i = 1,\ldots,\ell$, and $\prod_{i = 1}^\ell \Gamma_i$ has finite index in $\Gamma$. Of course, if $\Gamma$ is irreducible, then we have $\ell = 1$, $G_1 = G$, and $\Gamma_1 = \Gamma$. Let $\pi_1,\ldots,\pi_\ell$ denote the projections from $G$ to the factors $G_i$.

\begin{theorem}[{\cite[Theorem 1.7(a)]{KleinbockMargulis}}]
\label{theoremkleinbockmargulisdiscrete}
Fix $G,\Gamma,G_1,\ldots,G_\ell$ as above. Let $\mfg$ denote the Lie algebra of $G$, and let $\zz\in\mfg$ be an element of a Cartan subalgebra of $\mfg$. Suppose that $(\pi_i)'(\zz)\neq \0$ for all $i = 1,\ldots,\ell$. (If $G$ is simple, this just amounts to saying that $\zz\neq\0$.) Let $X = G/\Gamma$, let $\mu_X$ be normalized Haar measure on $X$, let $\dist_G$ be a right-invariant Riemannian metric on $G$, let $\dist_X$ be the quotient of $\dist_G$ by $\Gamma$, and let $\Delta:X\to\R$ be a distance-like function.\Footnote{We remark that whether or not $\Delta$ is distance-like is independent of the choice of the right-invariant Riemannian metric $\dist_G$, since any two such metrics $\dist_1,\dist_2$ satisfy $\dist_1 \asymp_\times \dist_2$.} If $(z_t)_1^\infty$ is a sequence in $\R$, then
\begin{equation}
\label{kleinbockmargulisdiscrete}
\mu_X\big(\{x\in X: e^{t\zz}(x) \in S_{\Delta,z_t} \text{ for infinitely many $t\in\N$}\}\big) = \begin{cases}
0 \text{ if } \sum_{t = 1}^\infty\Phi_\Delta(z_t) < \infty\\
1 \text{ if } \sum_{t = 1}^\infty \Phi_\Delta(z_t) = \infty
\end{cases}.
\end{equation}
\end{theorem}

\begin{remark}
In \cite[Theorem 1.7(a)]{KleinbockMargulis}, $\Gamma$ is assumed to be irreducible, and $\zz$ is simply assumed to be a nonzero vector in $\mfa$. However, in \cite[\610.3]{KleinbockMargulis}, the authors of \cite{KleinbockMargulis} describe how to modify their proof to include the case where $\Gamma$ is reducible. Incorporating those modifications leads to the above theorem.
\end{remark}

For the purposes of this paper, it will be more convenient to deal with the following ``continuous'' version of Theorem \ref{theoremkleinbockmargulisdiscrete}:

\begin{theorem}
\label{theoremkleinbockmargulis}
Let $G,\Gamma,\mfa,\zz,X,\mu_X,\Delta$ be as in Theorem \ref{theoremkleinbockmargulisdiscrete}. If $z:(0,\infty)\to(0,\infty)$ is nondecreasing, then
\begin{equation}
\label{kleinbockmargulis}
\mu_X\big(\{x\in X: e^{t\zz}(x) \in S_{\Delta,z(t)} \text{ for arbitrarily large $t > 0$}\}\big) = \begin{cases}
0 \text{ if } \sum_{t = 1}^\infty\Phi_\Delta\circ z(t) < \infty\\
1 \text{ if } \sum_{t = 1}^\infty \Phi_\Delta\circ z(t) = \infty
\end{cases}.
\end{equation}
\end{theorem}
\begin{proof}[Proof of Theorem \ref{theoremkleinbockmargulis} using Theorem \ref{theoremkleinbockmargulisdiscrete}]
Let $z_t^{(1)} = z(t)$, and let $z_t^{(2)} = z(t) - C$ for some $C > 0$. To complete the proof it suffices to demonstrate the following:

\begin{itemize}
\item[(i)] $\sum_{t = 1}^\infty \Phi_\Delta(z_t^{(i)}) < \infty$ if and only if $\sum_{t = 1}^\infty \Phi_\Delta\circ z(t) < \infty$,
\item[(ii)] $e^{t\zz}(x)\in S_{\Delta,z_t^{(1)}}$ for infinitely many $t\in\N$ implies $e^{t\zz}(x)\in S_{\Delta,z(t)}$ for arbitrarily large $t > 0$, and
\item[(iii)] If $C$ is large enough, $e^{t\zz}(x)\in S_{\Delta,z(t)}$ for arbitrarily large $t > 0$ implies $e^{t\zz}(x)\in S_{\Delta,z_t^{(2)}}$ for infinitely many $t\in\N$.
\end{itemize}
Indeed, (i) follows from the fact that $\Phi_\Delta$ is regular (since $\Delta$ is assumed distance-like), and (ii) is obvious, so we turn to (iii). Suppose that $e^{t\zz}(x)\in S_{\Delta,z(t)}$ for some $t$, and let $t' = \lfloor t\rfloor$. Then
\[
\dist_X\big(e^{t'\zz}(x),e^{t\zz}(x)\big) \leq C_1
\]
for some constant $C_1 > 0$; since $\Delta$ is uniformly continuous, there exists $C = C_2 > 0$ independent of $t$ so that $\big|\Delta\big(e^{t'\zz}(x)\big) - \Delta\big(e^{t\zz}(x)\big)\big| \leq C_2$. On the other hand, since $z$ is nondecreasing, $z_{t'}^{(2)} \leq z(t) - C$; it follows that $e^{t'\zz}(x)\in S_{\Delta,z_{t'}^{(2)}}$.
\end{proof}

Let $\orth(\nform)_0$ denote the identity component of $\orth(\nform)$. We claim that Theorem \ref{theoremkhinchinquadratic2} follows from applying Theorem \ref{theoremkleinbockmargulis} with
\begin{equation}
\label{reductions}
\begin{alignedat}{2}
G &= \orth(\nform)_0, &
\Gamma &= \orth(\nform;\Lambda_*)\cap \orth(\nform)_0,\\
X &= G/\Gamma \equiv \Omega_{\nform,\Lambda_*},\hspace{.2 in} &
\Delta &= -\log\mindist, \ \text{where $\delta$ is as in \eqref{mindist},} \\
\zz &= \frac{\del}{\del t}g_t \Big|_{t = 0} = \left[\begin{array}{lll}
-1 &&\\
& \0_{d - 1} &\\
&& 1
\end{array}\right], \text{ and} \hspace{-1 in} &\\
z(t) &= -\log r_\psi(t).
\end{alignedat}
\end{equation}
Obviously, the verification of this claim consists of two parts: showing that the hypotheses of Theorem \ref{theoremkleinbockmargulis} are satisfied, and showing that Theorem \ref{theoremkhinchinquadratic2} follows from the conclusion of Theorem \ref{theoremkleinbockmargulis}.

\begin{proof}[Verification of the hypotheses]
The verification of hypotheses is mostly a consequence of well-known facts; we leave the details to the reader, proving only the following statements:

\begin{itemize}\label{pagerefpiz}
\item[1.] \textsl{$(\pi_i)'(\zz)\neq\0\all\, i$.} To see this, first note that the group $G$ is isomorphic to $\orth(p,q)_0$, where $p = \Rrank$ and $q = d + 1 - \Rrank$. Now $\orth(p,q)_0$ is simple as long as $p + q\geq 3$ and $(p,q)\notin \{(4,0),(2,2),(0,4)\}$; if $(p,q)\in\{(4,0),(2,2),(0,4)\}$, then $\orth(p,q)_0$ is only semisimple. In our case, $1\leq p\leq q$ and $p + q = d + 1 \geq 3$, so $G$ is simple unless $p = q = 2$. If $G$ is simple, there is nothing to prove, so assume that $p = q = 2$. Then by Proposition \ref{propositionrenormalization}, $G\equiv \orth(2,2)_0$ is conjugate in $\SL_4(\R)$ to $\orth(\form_0)_0$, where
\[
\form_0(\xx) = x_0 x_3 - x_1 x_2
\]
is the exceptional quadratic form; moreover, it is readily seen that $\orth(\form_0)_0 = \SL_2(\R)\times\SL_2(\R)$, where $G\times H$ denotes the set of all matrices of the form $g\otimes h$, where $g\in G$ and $h\in H$. (Cf. the ``product structure'' of $M_{\form_0}$ described in Section \ref{sectionspecialtype}). Write $G = \matrix\big(\SL_2(\R)\times\SL_2(\R)\big)$ for some matrix $\matrix\in\SL_4(\R)$. Then the factors of $G$ are given by the formulas $G_1 = \matrix\big(\SL_2(\R)\times I\big)$, $G_2 = \matrix\big(I\times \SL_2(\R)\big)$.\Footnote{If $\Gamma$ is irreducible, then there will actually be only one factor, namely $G$, and so as before there is nothing to prove. (In fact, this happens if and only if $\Qrank = 1$.)} The tangent spaces are given by the formulas $\mfg_1 = \matrix\big(\sl_2(\R)\times I\big)$, $\mfg_2 = \matrix\big(I\times \sl_2(\R)\big)$. Now any element of either of these tangent spaces has eigenvalues $\lambda,\lambda,-\lambda,-\lambda$ for some $\lambda\in\R$. On the other hand, the eigenvalues of $\zz$ are $1,0,0,-1$. Thus $\zz\notin \mfg_1,\mfg_2$. It follows that $(\pi_i)'(\zz)\neq\0\all\, i$.
\item[2.] \label{pagerefDelta} \textsl{$\Delta$ is uniformly continuous.} To see this, fix $g\in G$ and $\Lambda\in X$; then for all $\rr\in\Lambda$, we have $\|g\rr\| \leq \|g\|\cdot\|\rr\|$, where $\|g\|$ is the operator norm of $g$. Taking the minimum over $\rr\in\Lambda\smallsetminus\{\0\}$ gives
\[
\mindist(g\Lambda) \leq \|g\| \mindist(\Lambda),
\]
or equivalently $\Delta(\Lambda) \leq \Delta(g\Lambda) + \log\|g\|$. A symmetric argument gives $\Delta(g\Lambda) \leq \Delta(\Lambda) + \log\|g^{-1}\|$. Since $\log\|g\|,\log\|g^{-1}\| \leq \dist_G(\id,g)$ for all $g$, it follows that \begin{equation}\label{lip}{\Delta\text{ is $1$-Lipschitz.}}\end{equation}

\item[3.] \textsl{$\Phi_\Delta$ is regular.} This will be a consequence of the following asymptotic formula for $\Phi_\Delta(z)$, whose proof will occupy Section \ref{sectionreductiontheory}, and which we will make further use of below:
\end{itemize}

\begin{repproposition}{propositionkDL}
For $z$ large enough,
\[
\Phi_\Delta(z) \asymp_\times \begin{cases}
e^{-(d - 1)z} & \nform\not\sim\form_0 \\
e^{-2z}z & \nform\sim\form_0
\end{cases}.
\]
\end{repproposition}
\noindent This completes the verification of the hypotheses of Theorem \ref{theoremkhinchinquadratic2}.
\end{proof}

\begin{proof}[Completion of the proof]
First, we rewrite \eqref{kleinbockmargulis} using \eqref{reductions}:
\begin{align*}
&
\begin{cases}
0 \text{ if } \sum_{t = 1}^\infty\Phi_\Delta\big(-\log r_\psi(t)\big) < \infty\\
1 \text{ if } \sum_{t = 1}^\infty \Phi_\Delta\big(-\log r_\psi(t)\big) = \infty
\end{cases}\\
&= \mu_{\nform,\Lambda_*}\big(\{\Lambda\in \Omega_{\nform,\Lambda_*}: g_t\Lambda \in S_{-\log\mindist,-\log r_\psi(t)} \text{ for arbitrarily large $t > 0$}\}\big)\\
&= \mu_{\nform,\Lambda_*}\big(\{\Lambda\in \Omega_{\nform,\Lambda_*}: \mindist(g_t\Lambda) \leq r_\psi(t) \text{ for arbitrarily large $t > 0$}\}\big)\\
&= \mu_{\nform,\Lambda_*}\big(\A_\nform(\psi)\big).
\end{align*}
So to complete the proof, it suffices to show that the series
\begin{equation}
\label{rpsilog}
\sum_{t = 1}^\infty\Phi_\Delta\big(-\log r_\psi(t)\big)
\end{equation}
is asymptotic to \eqref{loglog}. First of all, by Proposition \ref{propositionkDL}, we have
\[
\eqref{rpsilog} \asymp_\times \begin{cases}
\sum_{t = 1}^\infty r_\psi(t)^{d - 1} & \nform\not\sim\form_0 \\
\sum_{t = 1}^\infty r_\psi(t)^2 \big(-\log r_\psi(t)\big) & \nform\sim\form_0
\end{cases}.
\]
Let
\begin{equation}
\label{indicator}
n = \begin{cases}
0 & \nform\not\sim\form_0\\
1 & \nform\sim\form_0
\end{cases}.
\end{equation}
Then we can write both \eqref{loglog} and \eqref{rpsilog} in a uniform manner:
\begin{align*}
\eqref{loglog} &=_\pt \sum_{\numberq\in 2^\N} \numberq^{d-1} \log^n\log \numberq \,\psi^{d-1} (\numberq) \\
\eqref{rpsilog} &\asymp_\times \sum_{t = 1}^\infty r_\psi(t)^{d-1}  \big(-\log r_\psi(t)\big)^n.
\end{align*}
Since $\psi$ is regular, each of these series is asymptotic to its corresponding integral, that is,
\begin{align*}
\eqref{loglog} &\asymp_{\plus,\times} \int_0^\infty (2^x)^{d-1}  \log^n\log(2^x)\,\psi^{d-1} (2^x)\;\dee x\\
\eqref{rpsilog} &\asymp_{\plus,\times} \int_0^\infty r_\psi(t)^{d-1}  \big(-\log r_\psi(t)\big)^n\;\dee t.
\end{align*}
Let $\Psi(\numberq) = \numberq^{d-1} \log^n\log \numberq$. In the following integrals, we omit the finite limit of integration since it is irrelevant for determining whether or not the integral converges. The reader should think of the finite limit of integration as being some arbitarily large number.
\begin{align*}
\eqref{loglog}
&\asymp_{\plus,\times} \int^\infty (2^x)^{d-1}  \log^n\log(2^x)\,\psi^{d-1} (2^x)\;\dee x\\
&\asymp_{\times\phantom{,\plus}} \int^\infty \numberq^{d-1}  \log^n\log \numberq \,\psi^{d-1} (\numberq) \;\frac{\dee \numberq}{\numberq}\\
&=_{\phantom{\times,\plus}} \int^\infty R \psi^{d-1} (\numberq)\;\frac{\dee R}{\numberq\Psi'(\numberq)} & \text{(letting $\numberq = \Psi^{-1}(R)$)}\\
&\asymp_{\times\phantom{,\plus}} \int^\infty \psi^{d-1} \big(\Psi^{-1}(R)\big)\;\dee R. \since{$\Psi(\numberq)\asymp_\times \numberq\Psi'(\numberq)$}
\end{align*}
We shall now resort to the following lemma:
\begin{lemma}
\label{lemmacongruent}
Let $f:[c,\infty)\to(0,\infty)$ be a strictly decreasing continuous function. Then
\[
\int_c^\infty f(x)\;\dee x + cf(c) = \int_0^{f(c)} f^{-1}(x)\;\dee x.
\]
\end{lemma}
\begin{subproof}
The regions whose areas are represented by these integrals are congruent to each other via the map $(x,y)\mapsto (y,x)$.
\end{subproof}
Applying this lemma with $f = \psi^{d-1} \circ\Psi^{-1}$, we continue our calculation:
\begin{align*}
\eqref{loglog}
&\asymp_{\plus,\times} \int_0 \Psi\left(\psi^{-1}\big(U^{\frac1{d-1} }\big)\right)\;\dee U \by{Lemma \ref{lemmacongruent}}\\
&\asymp_{\times\phantom{,\plus}} \int^\infty \Psi\big(\psi^{-1}(e^{-t})\big)e^{-{(d-1)} t}\;\dee t &\text{(letting $U = e^{-{(d-1)} t}$)}\\
&=_{\phantom{\plus,\times}} \int^\infty r_\psi(t)^{d-1} \log^n\log(\psi^{-1}\big(e^{-t})\big)\;\dee t.
\end{align*}
Comparing with \eqref{rpsilog}, we see that we have proven Theorem \ref{theoremkhinchinquadratic2} in the case $n = 0$, and also for all functions $\psi$ satisfying
\begin{equation}
\label{ETSloglog}
\log\log\psi^{-1}(e^{-t}) \asymp_{\plus,\times} -\log r_\psi(t).
\end{equation}
\begin{remark}
For the remainder of the proof, we could require $n = 1$ and thus $d -1 =2$ to simplify notation somewhat. However, we prefer to keep the original notation.
\end{remark}

For each $c > 0$, let $\psi_{1,c}$ be defined by the equation
\[
r_{\psi_{1,c}}(t) = \frac{1}{t^c},
\]
i.e.
\[
\psi_{1,c}^{-1}(x) = \frac{1}{x(-\log x)^c}\cdot
\]
Then
\begin{align*}
-\log r_{\psi_{1,c}}(t) &= c\log t  \asymp_\times \log t\\
\log\log\psi_{1,c}^{-1}(e^{-t}) &= \log(t + c\log t ) \asymp_\plus \log t .
\end{align*}
This yields the following:
\begin{claim}
\label{claimkhinchinquadratic}
Fix $c_1 > c_2 > 0$. Then Theorem \ref{theoremkhinchinquadratic2} holds for any function $\psi_{1,c_1} \leq \psi \leq \psi_{1,c_2}$.
\end{claim}
\begin{subproof}
We have $\psi_{1,c_1}^{-1} \leq \psi^{-1} \leq \psi_{1,c_2}^{-1}$ and $r_{\psi_{1,c_1}} \leq r_\psi \leq r_{\psi_{1,c_2}}$, and thus
\[
\log\log\psi^{-1}(e^{-t}) \asymp_\plus \log t \asymp_\times -\log r_\psi(t),
\]
i.e.\ \eqref{ETSloglog} holds.
\end{subproof}

\begin{remark}
This completes the proof of Theorem \ref{theoremkhinchinquadratic2} for the case of most ``reasonable'' functions $\psi$, for example if $\psi$ can be written in terms of the elementary operations together with exponents and logs. Such a $\psi$ is always comparable to every function $\psi_{1,c}$ \cite[Chapter III]{Hardy}. On the other hand, if $c_1 > \frac1{d-1} > c_2 > 0$, then \eqref{loglog} converges with $\psi = \psi_{1,c_1}$ but diverges with $\psi = \psi_{1,c_2}$. If $\psi \lesssim_\times \psi_{1,c_1}$, then $\A_\nform(\psi) \subset \A_\nform(C\psi_{1,c_1})$ for some $C > 0$, implying that $\mu_{\nform,\Lambda_*}\big(\A_\nform(\psi)\big) = 0$. Similarly, if $\psi \gtrsim_\times \psi_{1,c_2}$ then $\mu_{\nform,\Lambda_*}\big(\A_\nform(\psi)\big) = 1$. Finally, if $\psi_{1,c_1} \lesssim_\times \psi \lesssim_\times \psi_{1,c_2}$, then Claim \ref{claimkhinchinquadratic} gives the desired result.
\end{remark}
We now proceed to prove the general case of Theorem \ref{theoremkhinchinquadratic2}, using Claim \ref{claimkhinchinquadratic}. Fix $c_1 > \frac1{d-1}  > c_2 > c_3 > 0$.

\begin{claim}
We can without loss of generality assume $\psi \geq \psi_{1,c_1}$.
\end{claim}
\begin{subproof}
Suppose that the theorem is true for all $\psi\geq\psi_{1,c_1}$, and let $\psi$ be arbitrary. Let $\psi' = \max(\psi,\psi_{1,c_1})$. Note that \eqref{loglog} converges for $\psi = \psi'$ if and only if it converges for $\psi = \psi$. Applying the known case of the theorem, we have
\[
\mu_{\nform,\Lambda_*}\big(\A_\nform(\psi')\big) = \begin{cases}
0 & \text{\eqref{loglog} converges}\\
1 & \text{\eqref{loglog} diverges}
\end{cases}.
\]
On the other hand, we have
\[
\A_\nform(\psi') = \A_\nform(\psi) \cup \A_\nform(\psi_{1,c_1}).
\]
Since the latter set has measure zero, the measures of $\A_\nform(\psi')$ and $\A_\nform(\psi)$ are equal.
\end{subproof}

So from now on, we assume $\psi \geq \psi_{1,c_1}$. If $\psi \leq \psi_{1,c_3}$, then this completes the proof (of Theorem \ref{theoremkhinchinquadratic2}). So we will assume that $\psi(q) > \psi_{1,c_3}(q)$ for arbitrarily large $q$.

\begin{claim}
Fix $\numberq_2$ for which $\psi(\numberq_2) > \psi_{1,c_3}(\numberq_2)$, and let $\numberq_1 < \numberq_2$ be the largest value for which $\psi(\numberq_1) \leq \psi_{1,c_2}(\numberq_1)$. Then
\begin{equation}
\label{Q1Q2}
\int_{\numberq_1}^{\numberq_2} \numberq^{d-1} \log^n\log \numberq\,\psi_{1,c_2}^{d-1}(\numberq)\;\frac{\dee \numberq}{\numberq} \gtrsim_\times \log^{(c_2/c_3)(1 - (d-1) c_2)} \numberq_2  - C\log^{1 - (d-1) c_2} \numberq_2 
\end{equation}
for some constant $C > 0$.
\end{claim}
\begin{proof}
Since $q\mapsto q\psi(q)$ is assumed to be nondecreasing, we have
\[
\numberq_1\psi_{1,c_2}(\numberq_1) \geq \numberq_1\psi(\numberq_1) \geq \numberq_2\psi(\numberq_2) > \numberq_2\psi_{1,c_3}(\numberq_2).
\]
On the other hand,
\[
\psi_c(q) \asymp_\times \frac{1}{q\log^c q},
\]
so
\begin{equation*}
\label{c2c3}
\log^{c_2} \numberq_1 \lesssim_\times \log^{c_3} \numberq_2.
\end{equation*}
Now
\begin{align*}
\int_{\numberq_1}^{\numberq_2} \numberq^{d-1} \log^n\log \numberq\,\psi_{1,c_2}^{d-1}(\numberq)\;\frac{\dee \numberq}{\numberq}
&\asymp_\times \int_{\numberq_1}^{\numberq_2} \frac{\log^n\log \numberq}{\log^{(d-1) c_2} \numberq }\;\frac{\dee \numberq}{\numberq}\\
&= \int_{\log \numberq_1}^{\log \numberq_2} \frac{\log^n t}{t^{(d-1) c_2}}\;\dee t\\
&\geq \int_{\log\numberq_1 }^{\log \numberq_2} t^{-(d-1)c_2} \;\dee t\\
&\asymp_\times \log^{1 - (d-1) c_2}\numberq_2  - \log^{1 - (d-1) c_2} \numberq_1\\
&\gtrsim_\times \log^{(c_2/c_3)(1 - (d-1) c_2)} \numberq_1 - C\log^{1 - (d-1) c_2} \numberq_1.
\end{align*}
\QEDmod\end{proof}
Since the right hand side of \eqref{Q1Q2} tends to infinity as $\numberq_2\to\infty$, the existence of infinitely large values of $\numberq_2$ for which the hypotheses of the claim are satisfied implies that
\[
\int^\infty \numberq^{d-1} \log^n\log \numberq\, \min\big(\psi(\numberq),\psi_{1,c_2}(\numberq)\big)^{d-1}\;\frac{\dee \numberq}{\numberq} = \infty,
\]
i.e.\ \eqref{loglog} diverges for $\psi = \min(\psi,\psi_{1,c_2})$. Thus by Claim \ref{claimkhinchinquadratic}, we have
\[
\mu_{\nform,\Lambda_*}\left(\A_\nform\big(\min(\psi,\psi_{1,c_2})\big)\right) = 1.
\]
But since $\A_\nform(\psi)\supset \A_\nform\big(\min(\psi,\psi_{1,c_2})\big)$, this completes the proof of Theorem \ref{theoremkhinchinquadratic2}.
\end{proof}


\section{Estimating the measure $\mu_{\nform,\Lambda_*}$}
\label{sectionreductiontheory}

In this section we estimate $\int\varphi\;\dee\mu_{\nform,\Lambda_*}$ for any function $\varphi:\Omega_{\nform,\Lambda_*}\to\Rplus$. Our main tools will be the generalized Iwasawa decomposition (Theorem \ref{theoremgeneralizediwasawa}) and the reduction theory of algebraic groups (Theorem \ref{theoremreduction2}). We first prove a theorem for general algebraic groups, and then specialize to the case $G = \orth(\nform)_0$.

We will need the following notation: if $X$ is a metric space with distance  $\dist_X$, $\varphi$ is a nonnegative continuous function on $X$ and $C>0$, we define 
\begin{equation}
\label{phiCdef}
\varphi^{(C)}(x) := \max_{\dist_X(x' , x)\leq C}\varphi(x'), \;\; \varphi_{(C)}(x) := \min_{\dist_X(x' , x)\leq C}\varphi(x').
\end{equation}

Let $G$ be a semisimple algebraic group. Let $P\subgp G$ be a parabolic subgroup, and let $P = M A N$ be a Langlands decomposition of $P$. Let $\mfg$, $\mfp$, $\mfm$, $\mfa$, and $\mfn$ denote the corresponding Lie algebras. Let $K\subgp G$ be a maximal compact subgroup whose Lie algebra $\mfk$ is orthogonal to $\mfa$ with respect to the Killing form.


\begin{theorem}[Generalized Iwasawa decomposition, {\cite[Proposition 8.44]{Knapp}}]
\label{theoremgeneralizediwasawa}
Let $\rho_P$ be the modular function of $P$. Then given any Haar measures $\mu_K$, $\mu_M$, $\mu_A$, $\mu_N$ on $K$, $M$, $A$, $N$ respectively, the measure $\mu_G$ given by
\[
\int_G \Phi\,\dee\mu_G := \int_{K\times M\times {A}\times N} \rho_P(a) \Phi({kman}) \;\dee (\mu_K\times\mu_M\times\mu_{A}\times\mu_N)(k,m,a,n),
\]
where $\Phi$ is a measurable function on $G$, is a Haar measure on $G$.
\end{theorem}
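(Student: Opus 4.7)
The plan is to follow the standard approach used in Knapp: verify that the purported formula defines a left-invariant Radon measure on $G$ by reducing left-invariance to the three factor groups $K$, $A$, and $N$ separately (the $M$ case being subsumed), and then tracking how the modular function $\rho_P$ emerges from the action of $A$ on $N$.

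The first ingredient is the generalized Iwasawa decomposition itself, namely the surjectivity of the multiplication map $\phi:K\times M\times A\times N \to G$, $(k,m,a,n)\mapsto kman$. I would begin by recalling this from $G = KP$ combined with $P = MAN$. The fibers of $\phi$ are the orbits of $K\cap M$ acting by $(k,m,a,n)\mapsto(km_0,m_0^{-1}m,a,n)$ for $m_0\in K\cap M$; since $K\cap M$ is compact, hence unimodular, and acts by translations on the first two factors, the weighted product measure $\rho_P(a)\,d\mu_K\,d\mu_M\,d\mu_A\,d\mu_N$ descends to a well-defined Radon measure $\mu_G$ on $G$. Positivity on compact sets with nonempty interior is immediate since $\phi$ is an open surjection.

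The main content is left $G$-invariance. Because $G = KP$, it is enough to check invariance under left translation by $K$ and by $P$ separately. Left $K$-invariance is built into the formula because $d\mu_K$ is left-Haar on $K$. Left $P$-invariance is further decomposed using $P = MAN$: left $N$-invariance and left $M$-invariance follow from the left-Haar property of $d\mu_N$ and $d\mu_M$ together with the facts that $M$ normalizes $A$ and $N$ and commutes with $A$, so no Jacobian appears. The decisive step is left $A$-invariance. Translating the expression $kman$ by $a_0\in A$ from the left, one has $a_0\cdot kman = k'm'a'n'$ after first applying the Iwasawa decomposition to $a_0k$ to extract a new $k'\in K$, and then absorbing the remaining $P$-factor by using that $A$ normalizes $N$. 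In the $N$-integral this produces the conjugation $n\mapsto a_0 n a_0^{-1}$, whose Jacobian with respect to $d\mu_N$ is $\rho_P(a_0)^{\pm 1}$; meanwhile the translation on $A$ leaves $d\mu_A$ invariant, so after re-expressing $\rho_P(a)$ in the new variables one obtains the required cancellation.

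The step I expect to be the main obstacle is the Jacobian computation for the conjugation action of $A$ on $N$. Concretely, this amounts to the identity
\[
\det\bigl(\mathrm{Ad}(a_0)\big|_{\mfn}\bigr) = \rho_P(a_0),
\]
where the right-hand side is the modular character of $P = MAN$. This is where the choice of normalization of $\rho_P$ is pinned down; once it is verified (via the restricted-root space decomposition $\mfn = \bigoplus_{\alpha > 0}\mfg_\alpha$ and the fact that $M$ acts on $\mfn$ through a compact, hence unimodular, subgroup), the change of variables in the $N$-integral exactly absorbs the factor $\rho_P(a_0)$. Putting all the pieces together shows that $\mu_G$ is left-invariant under both $K$ and $P$, hence under all of $G$, and is therefore a Haar measure. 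Since $G$ is semisimple and thus unimodular, this Haar measure is automatically bi-invariant, completing the proof.
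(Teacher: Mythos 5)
The paper offers no proof of this statement at all---it is quoted directly from Knapp---so there is nothing internal to compare against. Judged on its own terms, your proposal has a genuine gap precisely at the step you flag as the main content, namely the left $A$-invariance.

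With the factors ordered as $kman$ ($K$ on the left, $N$ on the right), the translations that act coordinatewise on the product are left translation by $K$ and \emph{right} translation by $M$, $A$, $N$ --- not left translation by $M$, $A$, $N$ as your sketch assumes. Left multiplication by $a_0\in A$ (likewise by $n_0\in N$, or by $m_0\in M$, which for a general parabolic is not contained in $K$) hits the $K$-coordinate first: one must re-decompose $a_0k=k'm''a''n''$, and every new component depends jointly on $a_0$ and $k$. Pushing the leftover $P$-part through $man$ gives $k\mapsto k'(a_0,k)$, $m\mapsto m''m$, $a\mapsto a''a$, and $n\mapsto\nu n$ with $\nu=a^{-1}m^{-1}n''ma\in N$ a fixed element for given $(k,m,a)$. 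Thus the $N$-coordinate undergoes only a left translation, which preserves $\mu_N$: the conjugation $n\mapsto a_0na_0^{-1}$ that you identify as the source of the factor $\rho_P(a_0)$ never appears in the left-invariance computation. The genuine Jacobian sits instead on the $K$-coordinate --- the map $k\mapsto k'(a_0,k)$ does not preserve $\mu_K$ --- and it must be compensated by the $k$-dependent shift $a\mapsto a''(a_0,k)a$ interacting with the weight $\rho_P(a)$. Proving that this $K$-Jacobian equals $\rho_P\big(a''(a_0,k)\big)^{-1}$ is essentially equivalent to the theorem itself, so the argument as written is circular at its key step. Your identity $\det(\Adj_{a_0}\given\mfn)=\rho_P(a_0)$ is the correct normalization, but it enters on the other side: it shows the weighted product measure is \emph{right}-invariant under $MAN$, since $kman\cdot a_0=km(aa_0)(a_0^{-1}na_0)$ and there the conjugation genuinely occurs. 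The standard way to close the argument (and Knapp's) is to verify the two easy invariances --- left under $K$, right under $P$ --- and then invoke the integration formula for a unimodular group written as a product $G=KP$ of closed subgroups with compact intersection, rather than checking left-invariance factor by factor.
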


Now suppose that $G$ is $\Q$-algebraic and that $P\subgp G$ is a minimal parabolic $\Q$-subgroup. Let $\Gamma\subgp G$ be a lattice commensurable to $G_\Z$.

\begin{definition}
\label{definitioncoarsedomain}
A set $\FF\subset G$ is a \emph{coarse fundamental domain} for $\Gamma$ if
\begin{itemize}
\item[(I)] $\FF \, \Gamma = G$, and
\item[(II)] $\#\{\gamma\in\Gamma: \FF\gamma \cap\FF\neq\emptyset\} < \infty$.
\end{itemize}
\end{definition}
\noindent Consider the set
\begin{equation}
\label{contractingweyl}
A^+ := \{a\in A : 
\Adj_a|_{\mfn}
 \text{ is contracting}\}.
\end{equation}
Here $\Adj_a$ denotes the adjoint action of $a$.

\begin{theorem}[Reduction theory for arithmetic groups, {\cite[Proposition 2.2]{Leuzinger} or \cite[Theorem 16.9]{Morris}}]
\label{theoremreduction1}
There exist precompact open sets $M_0\subset M$ and $N_0\subset N$ and a finite set $F\subset G_\Q$ such that
\begin{equation}
\label{Fdomain}
\FF := K M_0 A^+ N_0 F
\end{equation}
is a coarse fundamental domain for $\Gamma$.
\end{theorem}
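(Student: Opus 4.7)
The plan is to build $\FF$ as the union over a set of representatives for the (finitely many) $\Gamma$-conjugacy classes of minimal parabolic $\Q$-subgroups of a carefully chosen \emph{Siegel set} associated to each representative, and then establish the two properties of Definition \ref{definitioncoarsedomain} separately.

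First I would recall the Iwasawa decomposition $G = KAN$ and, more refined, the decomposition $G = K \cdot (MA) \cdot N$ coming from the Langlands decomposition of $P$. Using Theorem \ref{theoremgeneralizediwasawa}, every $g \in G$ can be written as $g = kman$ with $k \in K$, $m \in M$, $a \in A$, $n \in N$. A Siegel set associated to $P$ is, by definition, a set of the form $\Sigma_{M_0,N_0,t_0} := K M_0 A^+(t_0) N_0$, where $M_0 \subset M$ and $N_0 \subset N$ are precompact open subsets and $A^+(t_0) \subset A^+$ is a shifted Weyl chamber (shifted ``deep enough'' into the contracting direction, cf.\ \eqref{contractingweyl}). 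The key classical fact I would invoke is the \emph{Siegel property}: if $M_0$ and $N_0$ are chosen large enough and $A^+(t_0)$ shifted far enough, then for any $h_1, h_2 \in G_\Q$ the set $\{\gamma\in\Gamma : h_1\Sigma\gamma \cap h_2\Sigma \neq \emptyset\}$ is finite. This is the generalization by Borel--Harish-Chandra of Minkowski's original reduction theory for $\GL_n(\Z)$; its proof uses the Bruhat decomposition and estimates on the adjoint action of $A^+$ on $\mfn$, together with the discreteness of $\Gamma$.

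Next I would use the theorem of Borel that the number of $\Gamma$-conjugacy classes of minimal parabolic $\Q$-subgroups is finite; let $P_1 = P, P_2, \ldots, P_r$ be representatives and pick $g_i \in G_\Q$ with $g_i P g_i^{-1} = P_i$. Take $F := \{g_1^{-1}, \ldots, g_r^{-1}\}$. The candidate is
\[
\FF := \bigcup_{i=1}^r \Sigma \cdot g_i^{-1} = K M_0 A^+ N_0 F.
\]
To verify property (I), i.e.\ $\FF\Gamma = G$, I would argue: given $g \in G$, the maximal $\Q$-split torus and minimal parabolic attached to $g$ (via a suitable reduction process) is $\Gamma$-conjugate to some $P_i$, say via $\gamma \in \Gamma$; then $g\gamma \cdot g_i$ lies in a Siegel set for $P$ once $M_0$, $N_0$ are chosen large enough and $A^+(t_0)$ deep enough---this is the content of the classical ``covering'' half of reduction theory and is proved by an iterative reduction within $M A N$ using the action of $\Gamma \cap P$ on $N$ (a cocompact lattice in $N$) and on $M$ (compact-modulo-arithmetic).

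Property (II) then follows from applying the Siegel property to each pair $(g_i, g_j)$: since $F$ is finite, only finitely many $\gamma$ can contribute to $\FF\gamma \cap \FF \neq \emptyset$. The main obstacle, and the deepest input, is the covering statement in (I)---specifically, proving that one can always reduce to a bounded region in $M_0 N_0$ by acting by $\Gamma \cap P$, which is where one uses compactness of $M/(\Gamma \cap M)$ (up to an arithmetic factor absorbed into $F$) and the Mahler-type compactness statement for the lattice $\Gamma \cap N$ inside $N$. Everything else is essentially bookkeeping to match the shape of \eqref{Fdomain}.
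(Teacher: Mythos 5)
This theorem is quoted by the paper from \cite[Proposition 2.2]{Leuzinger} and \cite[Theorem 16.9]{Morris}; the paper supplies no proof of its own, so there is nothing internal to compare your argument against. Your outline is the standard Borel--Harish-Chandra proof underlying those references: Borel's finiteness theorem for $\Gamma$-conjugacy classes of minimal parabolic $\Q$-subgroups produces the finite set $F$, the Siegel property yields condition (II) of Definition \ref{definitioncoarsedomain}, and the covering half of reduction theory yields condition (I). The one place your sketch is thin is exactly where you flag it: the covering property. As written (``the minimal parabolic attached to $g$ is $\Gamma$-conjugate to some $P_i$, after which $g\gamma g_i$ lies in a Siegel set'') the step is close to circular, since attaching a parabolic to an arbitrary $g$ and then reducing its $M$- and $N$-components into a bounded region is precisely what must be proved; the actual argument minimizes the norm of a highest-weight vector of a suitable $\Q$-rational representation over the orbit $g\Gamma$ (or inducts on the $\Q$-rank), and only afterwards invokes the cocompactness of $\Gamma\cap N$ in $N$ and of $\Gamma\cap M$ in $M$ to bound the remaining factors. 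A second, smaller point: you work with a shifted chamber $A^+(t_0)$, whereas \eqref{Fdomain} demands the exact chamber $A^+$ of \eqref{contractingweyl}; since a coarse fundamental domain here is required to have precisely that shape, you should either arrange the covering argument to land in $A^+$ itself or verify that the shifted Siegel set can be rewritten in the form \eqref{Fdomain} after enlarging $M_0$, $N_0$, and $F$. Neither point is an error of substance --- your road map is the correct one for the cited classical result.
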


Let $\dist_G$ denote a right-invariant Riemannian metric on $G$. Let $X = G/\Gamma$, and consider the metric $\dist_X(x,x') = \min_{g\Gamma = x, g'\Gamma = x'}\dist_G(g,g')$. We note that $\dist_X$ is a Riemannian metric on $X$. Let $\mu_X$ denote the normalized Haar measure on $X$. 

\begin{theorem}
\label{theoremreduction2}
There exist $C > 0$ and a finite set $F\subset G_\Q$ such that for any function $\varphi: X\to\Rplus$, we have
\[
\int_{A^+} \rho_P(a) \sum_{f\in F} \varphi_{(C)}(af\Gamma) \;\dee\mu_{A}(a)
\lesssim_\times \int \varphi \;\dee\mu_X
\lesssim_\times \int_{A^+} \rho_P(a)\sum_{f\in F}\varphi^{(C)}(af\Gamma) \;\dee\mu_{A}(a).
\]
\end{theorem}
\begin{proof}
Let $M_0\subset M$, $N_0\subset N$ and $F\subset G_\Q$ be as in Theorem \ref{theoremreduction1}, and let $\FF$ be given by \eqref{Fdomain}. Let $\FF_0 = K M_0 A^+ N_0$, so that $\FF = \FF_0 F$. Then
\begin{align*}
\int_{\FF_0} \sum_{f\in F}\varphi(gf\Gamma)\;\dee\mu_G(g)
&\lesssim_\times \int_\FF \varphi(g\Gamma)\;\dee\mu_G(g) \since{$\#(F) < \infty$}\\
&\lesssim_\times \int \varphi \;\dee\mu_X \by{(II) of Definition \ref{definitioncoarsedomain}}\\
&\leq_\pt \int_\FF \varphi(g\Gamma)\;\dee\mu_G(g) \by{(I) of Definition \ref{definitioncoarsedomain}}\\
&\leq_\pt \int_{\FF_0} \sum_{f\in F} \varphi(gf\Gamma)\;\dee\mu_G(g).
\end{align*}
Let $\Phi(g) = \sum_{f\in F}\varphi(gf\Gamma)$, so that
\begin{equation}
\label{reduction1}
\int \varphi \;\dee\mu_X \asymp_\times \int_{\FF_0} \Phi \;\dee\mu_G.
\end{equation}
Now by Theorem \ref{theoremgeneralizediwasawa},
\begin{equation}
\label{iwasawa}
\int_{\FF_0} \Phi \;\dee\mu_G
= \int_{K\times M_0\times A^+\times N_0} \rho_P(a) \Phi(kman) \;\dee(\mu_K\times\mu_M\times\mu_{A}\times\mu_N)(k,m,a,n).
\end{equation}
Now let
\begin{equation}
\label{precompact}
C = \max\big\{\dist_G\big(\id,km(ana^{-1})\big) : k\in K, m\in M_0, a\in A^+, n\in N_0\big\}.
\end{equation}
Since $N$ is contracted by the adjoint action of $A^+$, the set $\{ana^{-1}: a\in A^+, n\in N_0\}$ is precompact and thus $C < \infty$. For $k\in K$, $m\in M_0$, $a\in A^+$, and $n\in N_0$ fixed, we have
\[
\dist_G(a,kman) = \dist_G(a,km(ana^{-1})a) \leq C
\]
and thus
\[
\Phi(kman) = \Phi(km(ana^{-1})a) \in [\Phi_{(C)}(a),\Phi^{(C)}(a)].
\]
Thus by \eqref{iwasawa},
\begin{equation}
\label{reduction2}
\begin{split}
&\int_{K\times M_0\times A^+\times N_0} \rho_P(a) \Phi_{(C)}(a) \;\dee(\mu_K\times\mu_M\times\mu_{A}\times\mu_N)(k,m,a,n)\\
&\leq \int_{\FF_0} \Phi \;\dee\mu_G\\
&\leq \int_{K\times M_0\times A^+\times N_0} \rho_P(a) \Phi^{(C)}(a) \;\dee(\mu_K\times\mu_M\times\mu_{A}\times\mu_N)(k,m,a,n).
\end{split}
\end{equation}
Now since $K$, $M_0$, and $N_0$ are open and precompact we have
\begin{equation}
\label{reduction3}
\begin{split}
&\int_{K\times M_0\times A^+\times N_0} \rho_P(a) \Phi^{(C)}(a) \;\dee(\mu_K\times\mu_M\times\mu_{A}\times\mu_N)(k,m,a,n)\\
&\asymp_\times \int_{A^+} \rho_P(a) \Phi^{(C)}(a) \;\dee\mu_{A}(a),
\end{split}
\end{equation}
and similarly for $\Phi_{(C)}$. Combining \eqref{reduction1}, \eqref{reduction2}, and \eqref{reduction3} completes the proof.
\end{proof}

Next we apply Theorem \ref{theoremreduction2} to the case where $G = \orth(\nform)_0$ for some quadratic form $\nform:\R^{d + 1}\to\R$. 
Suppose that $\Lambda_*$ is an $R$-arithmetic lattice  commensurable with $\Z^{d + 1}$. Then  $X:=\Omega_{\nform,{\Lambda_*}}\cong G/\Gamma $,  where\linebreak $\Gamma = \orth(\nform;\Lambda_*)$, see \eqref{reductions}. 
In view of Proposition \ref{propositioninclusionproper}, it is properly embedded into the space $\Omega_d$ of all lattices in $\R^{d+1}$. We are going to consider functions $\varphi:\Omega_{\nform,\Lambda_*}\to\Rplus$ which are restrictions of functions on $\Omega_d$ satisfying an additional property defined below.
\begin{definition}
\label{definitionmonotonic}
A function $\varphi:\Omega_
d\to\Rplus$ is \emph{monotonic} if $\Lambda_1\subgp\Lambda_2$ implies $\varphi(\Lambda_1)\leq\varphi(\Lambda_2)$.
\end{definition}

\begin{theorem}
\label{theoremreduction3}
Let $\nform:\R^{d + 1}\to\R$ be a $\Qrank$-normalized quadratic form, and suppose that 
$\Lambda_*\in\Omega_d$ is commensurable with $\Z^{d + 1}$. Let
\begin{align*}
\ss &=\left[\begin{array}{c}
d - 1 \\
d - 3 \\
\vdots \\
d + 1 - 2\Qrank
\end{array}\right] \in \R^\Qrank.
\end{align*}
There exists $C > 0$ such that for any monotonic function $\varphi:\Omega_d
\to\Rplus$ we have
\[
\int_{\tt\in\mfa^+} e^{-\ss\cdot\tt} \varphi_{(C)}(g_\tt\Lambda_*) \;\dee\tt
\lesssim_\times \int_{X
} \varphi \;\dee\mu_{X
}
\lesssim_\times \int_{\tt\in\mfa^+} e^{-\ss\cdot\tt} \varphi^{(C)}(g_\tt\Lambda_*)\;\dee\tt.
\]
\end{theorem}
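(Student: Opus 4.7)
The plan is to apply Theorem \ref{theoremreduction2} with $G := \orth(\nform)_0$, the arithmetic lattice $\Gamma := \orth(\nform;\Lambda_*)\cap G$ (commensurable with $\orth(\nform;\Z^{d+1})\cap G$ since $\Lambda_*$ is commensurable with $\Z^{d+1}$), and $X := G/\Gamma$ identified with $\Omega_{\nform,\Lambda_*}$ via $g\Gamma\leftrightarrow g\Lambda_*$ (an identification which intertwines Haar measures up to a scalar). For $P$ I would take the stabilizer in $G$ of the maximal isotropic $\Q$-flag $\LL_1\subset\LL_2\subset\cdots\subset\LL_\Qrank$; this is a minimal $\Q$-parabolic of $G$, and in its Langlands decomposition $P=MAN$ one has $A=\{g_\tt:\tt\in\R^\Qrank\}$, the connected maximal $\Q$-split torus of the Levi given by \eqref{gtdef1}. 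Under the parametrization $g_\tt\leftrightarrow\tt$, the Haar measure on $A$ corresponds to Lebesgue measure on $\R^\Qrank$ up to a constant.

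First, I would check that the contracting Weyl chamber $A^+$ from \eqref{contractingweyl} is exactly $\{g_\tt:t_0>t_1>\cdots>t_{\Qrank-1}>0\}$. The weights of $A$ on $\R^{d+1}$ are $\lambda_i=-t_i$ on $\ee_i$ for $i<\Qrank$, $\lambda_i=0$ for $\Qrank\leq i\leq d-\Qrank$, and $\lambda_{d-k}=t_k$ for $k=0,\ldots,\Qrank-1$. The weight of $A$ on a root vector supported in position $(i,j)$ of $\mathfrak{gl}_{d+1}$ is $\lambda_i-\lambda_j$, and demanding these to be strictly negative for all $i<j$ occurring in $\mfn$ (the strictly upper triangular part of $\mathfrak{o}(\nform)$ in the standard basis ordering) recovers the defining condition of $\mfa^+$.

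The core computation is then $\rho_P(g_\tt)=e^{-\ss\cdot\tt}$, i.e., that the sum (with multiplicity) of the weights of $A$ on $\mfn$ equals $-\ss\cdot\tt$. The relative $\Q$-root system of $G$ is of type $B_\Qrank$, $C_\Qrank$, or $D_\Qrank$ depending on the anisotropic remainder $\w\nform$, and the multiplicities of its short roots (if any) equal $d+1-2\Qrank$. Summing the weights of the positive roots with multiplicity produces
\[
\log\rho_P(g_\tt)=-\sum_{i=0}^{\Qrank-1}(d-1-2i)\,t_i=-\ss\cdot\tt.
\]
The same conclusion can be verified by a direct tally from the weight decomposition above, and checked in small cases: $\Qrank=1$ (where $\dim\mfn=d-1$ and every positive root has weight $-t_0$), the exceptional form $\nform_0$ of type $D_2$ giving $\ss=(2,0)$, and the $B_2$ case $d=4,\Qrank=2$ giving $\ss=(3,1)$.

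With these identifications, Theorem \ref{theoremreduction2} yields the desired asymptotic except that the right-hand integrand is $\sum_{f\in F}\phi^{(C)}(g_\tt f\Lambda_*)$ for a finite set $F\subset G_\Q$ indexing the cusps of $\Gamma$, rather than a single term. The main obstacle is to replace this finite sum by $\phi^{(C')}(g_\tt\Lambda_*)$ for a larger constant $C'$ (and similarly for $\phi_{(C)}$); this is where the monotonicity hypothesis on $\phi$ enters. Since $F$ is finite and each $f\Lambda_*$ is commensurable with $\Lambda_*$, the lattices $\bigcap_{f\in F}f\Lambda_*$ and $\sum_{f\in F}f\Lambda_*$ sandwich every $f\Lambda_*$; applying monotonicity of $\phi$ (Definition \ref{definitionmonotonic}) bounds $\phi(g_\tt f\Lambda_*)$ by values of $\phi$ at lattices lying within bounded distance of $g_\tt\Lambda_*$ in $X$, absorbing the $F$-dependent error into $C'$.
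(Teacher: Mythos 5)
Your proposal follows essentially the same route as the paper: apply Theorem \ref{theoremreduction2} to $G=\orth(\nform)_0$ with the minimal $\Q$-parabolic stabilizing the isotropic flag (the paper defines the same $P$ dynamically via $A^+$-conjugation), compute $\rho_P(g_\tt)=e^{-\ss\cdot\tt}$ by summing the positive restricted roots with multiplicities (the $\pm e_i$ roots having multiplicity $d+1-2\Qrank$), and collapse the finite sum over $F$ using monotonicity together with the sandwiching $\tfrac{1}{N_f}\Lambda_*\subset f\Lambda_*\subset N_f\Lambda_*$ of commensurable lattices. The only cosmetic difference is your passing mention of type $C_\Qrank$ (the relative root system of an orthogonal group is $B_\Qrank$ or $D_\Qrank$), which does not affect the computation since you also verify the weight sum directly.
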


We remark that even though we integrate $\varphi$ over $X = \Omega_{\nform,{\Lambda_*}}$, it is assumed to be a function on $\Omega_d$; 
 in particular, the functions $ \varphi_{(C)}$, $ \varphi^{(C)}$ are defined with respect to the Riemannian distance on $\Omega_d\cong\GL_{d + 1}(\R)/\GL_{d + 1}(\Z)$. 
\begin{proof}
Let $G = \orth(\nform)_0$ and let $\Gamma = \orth(\nform;\Lambda_*)\cap\orth(\nform)_0$. Then $G$ is a semisimple $\Q$-algebraic group, and $\Gamma$ is commensurable with $G_\Z$. For $\tt\in \R^\Qrank$, let $\Phi(\tt) = g_\tt$ be as in \eqref{gtdef1}, so that $\Phi:\R^\Qrank\to G$ is a homomorphism. Let $A = \Phi(\R^\Qrank)$. Then the Lie algebra $\mfa$ of $A$ is isomorphic to $\R^\Qrank$ via the map $\Phi'(\0)$. In our notation, we will not distinguish between $\mfa$ and $\R^\Qrank$.

Let
\begin{equation}
\label{qtorusplusdef}
\mfa^+ = \{\tt\in\R^\Qrank:t_0 > t_1 > \ldots > t_{\Qrank - 1} > 0 \} \subset \mfa,
\end{equation}
and let $A^+ = \exp(\mfa^+)$. Then $A$ is a maximal $\Q$-split torus, and $A^+$ is as in \eqref{contractingweyl}. Fix $a\in A^+$, and let $N\subgp G$ and $P\subgp G$ be the groups
\begin{align*}
N &:= \left\{g\in G: a^n g a^{-n} \tendsto n 0\right\}\\
P &:= \left\{g\in G: (a^n g a^{-n})_1^\infty \text{ is bounded} \right\},
\end{align*}
i.e.\ $N$ is the group of elements contracted by $A^+$, and $P$ is the group of elements stabilized by $A^+$. Then $P$ is a minimal parabolic $\Q$-subgroup of $G$ whose Langlands decomposition is $P = M A N$ for some reductive group $M\subgp P$. Moreover, $A^+$ is given by the formula \eqref{contractingweyl}. So by Theorem \ref{theoremreduction2}, there exist $C > 0$ and a finite set $F\subset G_\Q$ such that for any $\varphi:\Omega_{\nform,\Lambda_*}\to\Rplus$, we have
\begin{equation}
\label{reduction4}
\begin{split}
\int_{\tt\in \mfa^+} \rho_P(g_\tt) \sum_{f\in F} \varphi_{(C)}(g_\tt f\Lambda_*) \;\dee\tt
&\lesssim_\times 
\int_{X
} \varphi \;\dee\mu_{X
}\\
&\lesssim_\times \int_{\tt\in \mfa^+} \rho_P(g_\tt) \sum_{f\in F} \varphi^{(C)}(g_\tt f\Lambda_*) \;\dee\tt.
\end{split}
\end{equation}
Here we remark that formally Theorem \ref{theoremreduction2} produces \eqref{reduction4} with $ \varphi_{(C)}$, $ \varphi^{(C)}$  replaced by $ (\varphi_X)_{(C)}$, $ (\varphi_X)^{(C)}$ respectively, where the latter are defined  with respect to the Riemannian distance on $X$. But since we clearly have $ \varphi_{(C)}\le (\varphi_X)_{(C)}$ and  $ \varphi^{(C)}\ge  (\varphi_X)^{(C)}$,  \eqref{reduction4} follows.

\begin{claim}
For some $C' > 0$,
\begin{equation}
\label{ETSreduction}
\sum_{f\in F}\varphi^{(C)}(g_\tt f\Lambda_*) \lesssim_\times \varphi^{(C')}(g_\tt\Lambda_*).
\end{equation}
\end{claim}
\begin{subproof}
For $f\in F\subset G_\Q$ fixed, $f\Lambda_*$ is commensurable with $\Lambda_*$, and thus $\frac{1}{N_f}\Lambda_* \subgp f\Lambda_* \subgp N_f\Lambda_*$ for some $N_f\in\N$. In particular, since $\varphi$ is monotonic
\[
\varphi^{(C)}(g_\tt f\Lambda_*) \leq \varphi^{(C)}(g_\tt N_f\Lambda_*)  = \varphi^{(C)}( N_f g_\tt\Lambda_*) \leq \varphi^{(C + \log N_f)}(g_\tt\Lambda_*),
\]
where the last inequality follows since the distance on  $\Omega_d$ is defined via a Riemannian metric on $GL_{d+1}(\R)$. 
Thus \eqref{ETSreduction} holds with $C' = C + \log\max_{f\in F} N_f $.
\end{subproof}
A similar argument shows that
\[
\sum_{f\in F}\varphi_{(C)}(g_\tt f\Lambda_*) \gtrsim_\times \varphi_{(C')}(g_\tt\Lambda_*).
\]
Thus \eqref{reduction4} becomes
\begin{equation}
\label{reduction5}
\begin{split}
\int_{\tt\in \mfa^+} \rho_P(g_\tt) \varphi_{(C')}(g_\tt\Lambda_*)\;\dee\tt
&\lesssim_\times \int \varphi\;\dee\mu_{\nform,\Lambda_*}\\
&\lesssim_\times \int_{\tt\in \mfa^+} \rho_P(g_\tt) \varphi^{(C')}(g_\tt\Lambda_*)\;\dee\tt.
\end{split}
\end{equation}
\begin{claim}
$\rho_P(g_\tt) = e^{-\ss\cdot\tt}$. (Here and hereafter $\ss\cdot\tt$ denotes $\sum_{i = 1}^{\Qrank - 1} s_i t_i$.)

\end{claim}
\begin{subproof}
It is well-known (see e.g.\ \cite[(8.38)]{Knapp}\Footnote{The sign difference between \cite[(8.38)]{Knapp} and the present formula is due to Knapp's convention of assuming that $\mfn$ is the union of the positive root spaces, while we assume that $\mfn$ is the union of the negative root spaces (cf.\ \eqref{contractingweyl}).})
that $\rho_P(g_\tt) = e^{-\rho(\tt)}$, where $\rho$ is the sum of the positive roots of $A$, counting multiplicity.

So to demonstrate the claim, we must show that $\rho(\tt) = \ss\cdot\tt$. One verifies that the positive roots of $A$ are of the form
\begin{align*}
\lambda_{i,j,\pm} &:= \ee_i^* \pm \ee_j^* & i < j < \Qrank&\\
\lambda_i &:= \ee_i^* & i < \Qrank&,
\end{align*}
with corresponding root spaces
\begin{align*}
\mfg_{\lambda_{i,j,-}} &= \R\big( \ee_j \cdot\ee_i^* - \ee_{d - i} \cdot \ee_{d - j}^*\big)\\
\mfg_{\lambda_{i,j,+}} &= \R\big( \ee_{d - j}\cdot\ee_i^* - \ee_{d - i}\cdot\ee_j^*\big) \\
\mfg_{\lambda_i} &= \{\xx\cdot\ee_i^* - \ee_{d - i}\cdot 2B_{\w\nform}(\xx,\cdot) : (x_\Qrank,\ldots,x_{d - \Qrank}) \in \R^{d + 1 - 2\Qrank}\}.
\end{align*}
In particular, the multiplicity of the root $\lambda_{i,j,\pm}$ is $1$, and the multiplicity of the root $\lambda_i$ is $(d + 1 - 2\Qrank)$. Thus
\begin{align*}
\rho &= \sum_{j = 1}^{\Qrank - 1} \sum_{i = 0}^{j - 1} [(\ee_i^* + \ee_j^*) + (\ee_i^* - \ee_j^*)] + \sum_{i = 0}^{\Qrank - 1}(d + 1 - 2\Qrank)\ee_i^*\\
&= \sum_{i = 0}^{\Qrank - 1} \left[2(\Qrank - i - 1) +(d + 1 - 2\Qrank)\right]\ee_i^*
= \sum_{i = 0}^{\Qrank - 1} [d - 2i - 1]\ee_i^*.
\end{align*}

\end{subproof}
This completes the proof of Theorem \ref{theoremreduction3}.
\end{proof}

Finally, we use Theorem \ref{theoremreduction3} to complete the proof of Theorem \ref{theoremkhinchinquadratic}. Recall that $\Delta$ denotes the function $\Delta = -\log\mindist: \Omega_{\nform,\Lambda_*}\to\R$ (cf.\ \eqref{reductions}), where $\mindist$ is defined by \eqref{mindist},
and that for $z\in\R$,
\[
S_{\Delta,z} = \{\Lambda\in \Omega_{\nform,\Lambda_*} : \Delta(\Lambda)\geq z\}.
\]

\begin{proposition}
\label{propositionkDL}
For $z$ large enough,
\[
\Phi_\Delta(z) := \mu_{\nform,\Lambda_*}(S_{\Delta,z}) \asymp_\times \begin{cases}
e^{-(d - 1)z} & \nform\not\sim\form_0 \\
e^{-2z}z & \nform\sim\form_0
\end{cases}.
\]
\end{proposition}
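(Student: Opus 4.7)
The plan is to apply Theorem \ref{theoremreduction3} with $\phi = \mathbf{1}_{S_{\Delta,z}}$, reducing the measure $\mu_{\nform,\Lambda_*}(S_{\Delta,z})$ to an explicit integral over the Weyl chamber $\mathfrak{a}^+$. The crux will be a two-sided estimate $\rho_\nform(g_\mathbf{t}\Lambda_*) \asymp_\times e^{-t_0}$ for $\mathbf{t}$ deep in the chamber, after which the asymptotic follows from an elementary iterated exponential integral whose behaviour is controlled by the vanishing (or not) of the last entry of $\ss$.

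First I would check that $\phi$ is monotonic as required: if $\Lambda_1 \subseteq \Lambda_2$ then $\rho_\nform(\Lambda_2) \leq \rho_\nform(\Lambda_1)$, hence $\Delta(\Lambda_2) \geq \Delta(\Lambda_1)$, giving the implication in Definition \ref{definitionmonotonic}. Because $\Delta$ is $1$-Lipschitz (p.\pageref{pagerefDelta}), the thickened and thinned versions $\phi^{(C)}, \phi_{(C)}$ agree with $\mathbf{1}_{S_{\Delta, z-C}}$ and $\mathbf{1}_{S_{\Delta, z+C}}$ respectively, so both sides of Theorem \ref{theoremreduction3} differ from the target only by an additive shift in $z$. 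Thus it suffices to prove that
\[
I(z) := \int_{\mathbf{t}\in\mathfrak{a}^+} e^{-\ss\cdot\mathbf{t}}\,\mathbf{1}\bigl[\rho_\nform(g_\mathbf{t}\Lambda_*) \leq e^{-z}\bigr]\,d\mathbf{t}
\]
satisfies the desired asymptotic up to multiplicative constants.

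Next I would establish $\rho_\nform(g_\mathbf{t}\Lambda_*)\asymp_\times e^{-t_0}$ for $\mathbf{t}\in\mathfrak{a}^+$ with $t_0$ above a fixed threshold (when $t_0$ is bounded, $\rho_\nform$ is uniformly bounded below by the Mahler-type Corollary \ref{theoremmahlerquadratic}, so the indicator vanishes for $z$ large). The upper bound uses commensurability of $\Lambda_*$ with $\Z^{d+1}$: some $N^{-1}\ee_0$ lies in $\Lambda_*\cap L_\nform$ (since $\nform$ is $\Qrank$-normalized), and its $g_\mathbf{t}$-image has norm $\lesssim e^{-t_0}$. For the lower bound I would fix a nonzero $\mathbf{v}=(v_0,\ldots,v_d)\in\Lambda_*\cap L_\nform$ and use $\Lambda_*\subset M^{-1}\Z^{d+1}$ to lower-bound nonzero coordinates by $1/M$, splitting into three cases: (a) if some $v_{d-i}\neq 0$ with $i<\Qrank$, the $(d-i)$th coordinate of $g_\mathbf{t}\mathbf{v}$ has magnitude $\geq e^{t_i}/M\geq 1/M$; (b) if all such $v_{d-i}$ vanish but some $v_j\neq 0$ for $\Qrank\leq j\leq d-\Qrank$, that coordinate of $g_\mathbf{t}\mathbf{v}$ equals $v_j$, of magnitude $\geq 1/M$; (c) if $\mathbf{v}\in\LL_\Qrank$, some $v_i\neq 0$ with $i<\Qrank$ gives $e^{-t_i}|v_i|\geq e^{-t_0}/M$ (using $t_i\leq t_0$). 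In every case $\|g_\mathbf{t}\mathbf{v}\|\gtrsim e^{-t_0}$, so the indicator reduces, up to a constant shift, to $\mathbf{1}[t_0\geq z]$.

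Finally I would compute $I(z) = \int_z^\infty e^{-(d-1)t_0} J(t_0)\,dt_0$, where $J(t_0)$ is the iterated integral of $e^{-\sum_{i\geq 1} s_i t_i}$ over $0<t_{\Qrank-1}<\cdots<t_1<t_0$, with $s_i = d-1-2i$. Corollary \ref{corollaryrankbound} gives $\Qrank\leq(d+1)/2$, hence $s_i\geq 2$ for every $i\leq\Qrank-2$; integrating from the innermost variable outward, each positive-exponent layer absorbs any polynomial factor. If $s_{\Qrank-1}>0$ then $J(t_0)=O(1)$ and $I(z)\asymp_\times e^{-(d-1)z}$; if $s_{\Qrank-1}=0$ but $\Qrank\geq 3$, the innermost integral contributes $t_{\Qrank-2}$, which is absorbed by $s_{\Qrank-2}\geq 2$, again giving $J(t_0)=O(1)$ and $I(z)\asymp_\times e^{-(d-1)z}$; whereas if $\Qrank=2$ and $d=3$ (so $\ss=(2,0)$), then $J(t_0)=t_0$ and $I(z)=\int_z^\infty t_0 e^{-2t_0}\,dt_0\asymp_\times z e^{-2z}$. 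Since $\nform\sim\form_0$ is equivalent to $\Qrank=\Rrank=2$, which (because $\form_0$ lives on $\R^4$) forces $d=3$, and conversely $d=3$ with $\Qrank=2$ forces $\Rrank=2$ by Corollary \ref{corollaryrankbound}, these three sub-cases align exactly with the two cases of the Proposition. The hard part is the case analysis giving the uniform lower bound in paragraph three: it must rule out any cancellation from the expanding directions, which is exactly what the Weyl-chamber condition $t_i>0$ and the integrality of $\Lambda_*$ provide.
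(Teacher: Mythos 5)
Your proposal is correct and follows essentially the same route as the paper's proof: apply Theorem \ref{theoremreduction3} to the indicator of $S_{\Delta,z}$, absorb the $(C)$-thickening via the $1$-Lipschitz property of $\Delta$, show $\Delta(g_\tt\Lambda_*)\asymp_\plus t_0$ on $\mfa^+$ (the paper does this by noting the shortest vector of $g_\tt\Z^{d+1}$ is $e^{-t_0}\ee_0$ and invoking commensurability, which is your coordinate case-analysis in compressed form), and evaluate the resulting Weyl-chamber integral, with the logarithmic factor appearing exactly when $s_1=d-3=0$, i.e.\ $\nform\sim\form_0$. The only divergences are organizational (iterating the inner integral versus the paper's single bound $e^{-\ss\cdot\tt}\le e^{-s_1 t_1}$ when $d\ge4$), not substantive.
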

\begin{proof}
Clearly $\mindist(\Lambda) = \min_{\pp\in\Lambda \smallsetminus\{\0\}}\|\pp\|$ and $\Delta = -\log\mindist$ can be extended to $\Omega_d$ using the same definition.
For each $z\in\R$, define 
$$\varphi_z := 1_{\{\Lambda\in \Omega_{d} : \Delta(\Lambda)\geq z\}}.$$ Then the restriction of $\varphi_z$ to $\Omega_{\nform,\Lambda_*}$ is the characteristic function of $S_{\Delta,z}$, so that $$\Phi_\Delta(z) = \int_{\Omega_{\nform,\Lambda_*}}\varphi_z\;\dee\mu_{\nform,\Lambda_*}.$$ Observe that $\varphi_z$ is monotonic in the sense of Definition \ref{definitionmonotonic}, with $X = \Omega_{d}$. Thus by Theorem \ref{theoremreduction3}, there exists $C > 0$ independent of $z$ such that
\[
\int_{\tt\in\mfa^+} e^{-\ss\cdot\tt} (\varphi_z)_{(C)}(g_\tt\Lambda_*)\;\dee\tt
\lesssim_\times \Phi_\Delta(z)
\lesssim_\times \int_{\tt\in\mfa^+} e^{-\ss\cdot\tt} (\varphi_z)^{(C)}(g_\tt\Lambda_*) \;\dee\tt.
\]
Since $\Delta$ is $1$-Lipschitz 
(see \eqref{lip}), we have
\[
(\varphi_z)_{(C)} \geq \varphi_{z + C} \text{ and } (\varphi_z)^{(C)} \leq \varphi_{z - C},
\]
and so
\[
f(z + C)
\lesssim_\times \Phi_\Delta(z)
\lesssim_\times f(z - C),
\]
where
\[
f(z) := \int_{\tt\in\mfa^+} e^{-\ss\cdot\tt} \varphi_z(g_\tt\Lambda_*)\;\dee\tt.
\]
Thus to complete the proof it suffices to show that
\begin{equation}
\label{ETSkDL}
f(z) \asymp_\times \begin{cases}
e^{-(d - 1)z} & \nform\not\sim\form_0 \\
e^{-2z}z & \nform\sim\form_0
\end{cases}.
\end{equation}
Indeed, observe that that for $\tt\in\mfa^+$, the smallest vector in $g_\tt(\Z^{d + 1})$ is $g_\tt(\ee_0) = e^{-t_0}\ee_0$. Thus $\Delta(g_\tt\Z^{d + 1}) = t_0$. On the other hand, since $\Lambda_*$ is commensurable with $\Z^{d + 1}$, we have $\frac1N \Z^{d + 1}\subgp\Lambda_* \subgp N\Z^{d + 1}$ for some $N\in\N$, which implies $|\Delta(g_\tt\Lambda_*) - \Delta(g_\tt\Z^{d + 1})| \leq \log N$ for all $\tt$. It follows that $\Delta(g_\tt\Lambda) \asymp_\plus t_0$, and so
\[
\varphi_z(g_\tt\Z^{d + 1}) \asymp_\times \begin{cases}
1 & t_0 \geq z\\
0 & \text{otherwise}
\end{cases}.
\]
Therefore
\[
f(z) \asymp_\times \int_{\substack{t_0 > t_1 > \cdots > t_{\Qrank - 1} > 0 \\ t_0 > z}}\, e^{-\ss\cdot\tt} \;\dee\tt.
\]
\begin{claim}
For $x \geq 1$,
\[
\int_{x > t_1 > \cdots > t_{\Qrank - 1} > 0} e^{-\ss\cdot\tt} \;\dee\tt
\asymp_\times \begin{cases}
1 & \nform\not\sim\form_0\\
x & \nform\sim\form_0
\end{cases}.
\]
\end{claim}
\begin{subproof}
If $\Qrank = 1$, then the domain of integration is zero-dimensional, making the statement trivial. Thus suppose $\Qrank\geq 2$. If $d = 3$, then Proposition \ref{propositionrenormalization} implies that $\nform\sim\form_0$. So if $\nform\not\sim\form_0$, then $d\geq 4$ and in particular $s_1 = d - 3 > 0$. Since $s_i \geq 0$ for all $i$, we have
\[
\int_{t_1 > \cdots > t_{\Qrank - 1} > 0} e^{-\ss\cdot\tt} \;\dee\tt \leq \int_{t_1 > \cdots > t_{\Qrank - 1} > 0} e^{-s_1 t_1} \;\dee\tt \leq \int_{t_1,\ldots,t_{\Qrank - 1} > 0} e^{-\frac{s_1}{\Qrank - 1} \sum_{i = 1}^{\Qrank - 1}t_i} \;\dee\tt < \infty,
\]
demonstrating the upper bound. The lower bound is trivial, so this completes the proof if $\nform\not\sim\form_0$.

Now suppose that $\nform\sim\form_0$. Then $s_1 = 0$, and
\[
\int_{x > t_1 > \cdots > t_{\Qrank - 1} > 0} e^{-\ss\cdot\tt}\;\dee\tt = \int_{x > t_1 > 0} 1 \;\dee t_1 = x.
\]
\end{subproof}
\noindent Let $n$ be given by \eqref{indicator}, so that
\[
\int_{x > t_1 > \cdots > t_{\Qrank - 1} > 0} e^{-\ss\cdot\tt} \;\dee\tt \asymp_\times x^n.
\]
Integrating over $t_0 > z$ gives
\[
f(z) \asymp_\times \int_{t_0 > z} e^{-s_0 t_0} t_0^n \;\dee t_0 \asymp_\times e^{-s_0 z} z^n = e^{-(d - 1) z} z^n,
\]
demonstrating \eqref{ETSkDL}.
%
\end{proof}

We end this section by proving a lemma which was needed in the proof of Theorem \ref{theoremdirichletquadratic}(ii,iii). Recall the definition of codiameter given in Definition \ref{definitioncodiam}:

\begin{lemma}
\label{lemmaCnew}
There exists $C_1 > 0$ such that for every $\Lambda\in \Omega_{\nform,\Lambda_*}$, there exists a totally isotropic $\Lambda$-rational subspace $V\subgp\R^{d + 1}$ of dimension $\Qrank$ satisfying $\Codiam(V\cap\Lambda)\leq C$. 
\end{lemma}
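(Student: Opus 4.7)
The plan is to combine the reduction theory for $\Gamma := \orth(\nform;\Lambda_*)\cap\orth(\nform)_0$ inside $G := \orth(\nform)_0$ from Theorem \ref{theoremreduction1} with the fact that the minimal $\Q$-parabolic $P = MAN$ appearing in that decomposition stabilizes the totally isotropic $\Q$-subspace $\LL_\Qrank$ setwise. Given $\Lambda\in\Omega_{\nform,\Lambda_*}$, I would write $\Lambda = g\Lambda_*$; using $\FF\Gamma = G$ together with the fact that $\Gamma$ fixes $\Lambda_*$, we may assume $g = kmanf$ with $k\in K$, $m\in M_0$, $a = g_\tt$ for some $\tt\in\mfa^+$, $n\in N_0$, and $f\in F\subset G_\Q$. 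The candidate will then be
\[
V \;:=\; g(\LL_\Qrank) \;=\; k(\LL_\Qrank),
\]
the second equality using that $man\in P$ fixes $\LL_\Qrank$ setwise. Then $V$ is $\Qrank$-dimensional and totally isotropic because $k\in\orth(\nform)$, and it is $\Lambda$-rational because
\[
V\cap\Lambda \;=\; k\bigl(\LL_\Qrank\cap manf\Lambda_*\bigr) \;=\; k\cdot man(\Delta_f), \qquad \Delta_f := \LL_\Qrank\cap f\Lambda_*,
\]
with $\Delta_f$ a full lattice in $\LL_\Qrank$ since $f\in G_\Q$ makes $f\Lambda_*$ commensurable with $\Z^{d+1}$.

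To bound $\Codiam(V\cap\Lambda)$, I would first use that $k\in K$ acts as a Euclidean isometry on $\R^{d+1}$ to reduce to bounding $\Codiam(man(\Delta_f))$ in $\LL_\Qrank$. Applying the elementary inequality
\[
\Codiam(T\Gamma_0) \;\leq\; \|T\|\cdot\Codiam(\Gamma_0)
\]
(valid for any lattice $\Gamma_0\subset\LL_\Qrank$ and any invertible linear map $T:\LL_\Qrank\to\LL_\Qrank$) iteratively to $n$, then $a$, then $m$ yields
\[
\Codiam\bigl(man(\Delta_f)\bigr) \;\leq\; \|m\|_{\LL_\Qrank}\cdot\|a\|_{\LL_\Qrank}\cdot\|n\|_{\LL_\Qrank}\cdot\Codiam(\Delta_f),
\]
where $\|\cdot\|_{\LL_\Qrank}$ denotes the operator norm of the corresponding action on $\LL_\Qrank$. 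Now $\Codiam(\Delta_f)$ is bounded uniformly over the finite set $F$, since each $\Delta_f$ is commensurable with the fixed lattice $\LL_\Qrank\cap\Z^{d+1}$; the quantities $\|m\|_{\LL_\Qrank}$ and $\|n\|_{\LL_\Qrank}$ are bounded because $M_0$ and $N_0$ are precompact; and, crucially, $\|a\|_{\LL_\Qrank}\leq 1$ because $a = g_\tt$ acts on $\LL_\Qrank$ as $\operatorname{diag}(e^{-t_0},\ldots,e^{-t_{\Qrank-1}})$ with every $t_i > 0$. These combine to give a bound independent of $\Lambda$, as required.

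The main obstacle is the structural claim that the parabolic $P$ underlying Theorem \ref{theoremreduction1} really stabilizes $\LL_\Qrank$ and that $A^+$ acts on $\LL_\Qrank$ as a contraction. These follow from standard theory: a minimal $\Q$-parabolic containing the maximal $\Q$-split torus $A$ is the stabilizer of a maximal flag of $\Q$-isotropic subspaces, which in the $\Qrank$-normalized coordinates may be taken to be $\LL_1\subset\cdots\subset\LL_\Qrank$, and the contraction property is precisely what determines the choice of Weyl chamber $\mfa^+$ made in \eqref{qtorusplusdef}, as is implicit in the root-space computation carried out in the proof of Theorem \ref{theoremreduction3}.
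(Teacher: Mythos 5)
Your argument is essentially the paper's proof: the paper likewise takes $V = km(ana^{-1})(\LL_\Qrank) = k(\LL_\Qrank)$ from the reduction-theoretic decomposition $g = kmanf$ of Theorem \ref{theoremreduction1}, uses that $P = MAN$ stabilizes $\LL_\Qrank$ with $A^+$ acting on it as a contraction, and bounds $\Codiam(V\cap\Lambda)$ via the precompactness of $K$, $M_0$, $N_0$ and the finiteness of $F$. One small correction: the asserted identity $g(\LL_\Qrank) = k(\LL_\Qrank)$ is false in general, since $f\in F\subset G_\Q$ need not preserve $\LL_\Qrank$; the correct (and clearly intended) candidate is $V := kman(\LL_\Qrank) = k(\LL_\Qrank)$, which is what your computation of $V\cap\Lambda$ actually uses, so nothing downstream is affected.
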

\begin{proof}
Let $G$, $\Gamma$, $A$, $A^+$, $N$, $P$, and $M$ be as in the proof of Theorem \ref{theoremreduction3}. Let $M_0\subset M$, $N_0\subset N$, and $F\subset G_\Q$ be as in Theorem \ref{theoremreduction1}, and let $\FF$ be given by \eqref{Fdomain}. Then for every $\Lambda\in \Omega_{\nform,\Lambda_*}$, we can write $\Lambda = g\Lambda_*$ for some $g\in\FF$. Write
\[
g = kmanf = km(ana^{-1}) af,
\]
where $k\in K$, $m\in M_0$, $a\in A^+$, $n\in N_0$, and $f\in F$. Write $h = km(ana^{-1})$, so that
\[
\Lambda = haf\Lambda_*.
\]
We recall (cf.\ \eqref{precompact}) that
\[
\dist_G(\id,h) \leq C
\]
for some $C > 0$ independent of $\Lambda$.

Let $V_0 = \LL_\Qrank$, and let $V = h(V_0)$. We observe that $V_0$ is a totally isotropic $af\Lambda_*$-rational subspace of $\R^{d + 1}$ of dimension $\Qrank$, and thus $V$ is a totally isotropic $\Lambda$-rational subspace of $\R^{d + 1}$ of dimension $\Qrank$.

Since $a$ is contracting on $V_0$, we have $\Codiam(V_0\cap af\Lambda_*) \leq \Codiam(V_0\cap f\Lambda_*)$. On the other hand, $\Codiam(V_0\cap f\Lambda_*) \asymp_\times 1$ since $f$ ranges over a finite set. Thus
\[
\Codiam(V\cap \Lambda) \leq e^{\dist_G(\id,h)}\Codiam(V_0\cap af\Lambda_*) \lesssim_\times e^C.
\]
This completes the proof.
\end{proof}

\section{The exceptional quadric hypersurface}
\label{sectionspecialtype}

Recall that the \emph{exceptional quadric hypersurface} is the hypersurface $M_{\form_0}$ defined by the exceptional quadratic form \eqref{form0}.
This hypersurface occupies an interesting place in the theory of intrinsic Diophantine approximation on quadric hypersurfaces developed in this paper. To begin with, it has ``more rational points than expected''. Specifically, according to Theorem \ref{theoremheathbrown}
\begin{equation}
\label{specialtypecount}
N_{M_{\form_0}}(\numberq) \asymp_\times \numberq^2 \log \numberq,
\end{equation}
rather than $N_{M_\form}(\numberq) \asymp_\times \numberq^2$, which holds when $\form$ is a quadratic form on $\R^4$ which is not equivalent to $\form_0$. Nevertheless, these ``extra points'' do not appear to affect either the Dirichlet- or Khintchine-type theorems of these manifolds in quite the way one would expect. With regards to the Dirichlet-type theorem, the extra points have no effect at all, and the optimal Dirichlet function for $M_\form$ is always $\psi_1$, independent of whether or not $\form\sim\form_0$. On the other hand, the extra points do affect the Khintchine-type theorem, but not as expected: they introduce a factor of $\log\log \numberq$ into the series \eqref{loglog}, rather than a factor of $\log \numberq$ as a naive application of the Borel--Cantelli lemma would predict.

It is natural to ask whether these extraordinary properties of the exceptional quadric hypersurface are due to special algebraic properties. This turns out to be the case; in this section we make this special structure explicit, and use this explicitness to derive elementary proofs both of \eqref{specialtypecount} and of the convergence case of Theorem \ref{theoremkhinchinquadratic} for the manifold $M_{\form_0}$.


We begin by describing the special algebraic property which leads to the results outlined above: The manifold $M_{\form_0}$ is isomorphic to $\P_\R^1\times\P_\R^1$, with the isomorphism given by the Segre embedding $\Phi:\P_\R^1\times\P_\R^1\to\P_\R^3$ defined by the formula $\Phi([\xx],[\yy]) = [\xx\otimes\yy]$, or, more explicitly,
\[
\Phi([(x_0,x_1)],[(y_0,y_1)]) = [(x_0y_0,x_0y_1,x_1y_0,x_1y_1)].
\]
Thus $M_{\form_0}$ has a ``product structure''.   This explains why the lattice $\orth(\form_0;\Z)\cap \orth(\form_0)_0$ factors as $\SL_2(\Z)\times\SL_2(\Z)$; each factor of $\SL_2(\Z)$ acts on a different copy of $\P_\R^1$. Note that the natural metric on $M_{\form_0}$ is compatible with the   distance inherited from $\P_\R^3$ under the Segre embedding.

We also remark that the product structure of $M_{\form_0}$ is consistent with its Diophantine structure. More precisely, the set of intrinsic rationals $\P_\Q^3\cap M_{\form_0}$ factors as $\P_\Q^1\times\P_\Q^1$; moreover, for $[\pp],[\qq]\in\P_\Q^1$,
\begin{equation}
\label{heightproduct}
H_\std\big(\Phi([\pp],[\qq])\big) = H_\std([\pp]) \cdot H_\std([\qq]).
\end{equation}
\begin{remark}
According to formula \eqref{heightproduct}, the Diophantine triple
\[
(\iota_3^{-1}(M_{\form_0}),\Q^3\cap \iota_3^{-1}(M_{\form_0}),H_\std)
\]
is locally isomorphic to the Diophantine triple $(\R^2,\Q^2,H_{\typewriter prod})$ considered in \cite{FishmanSimmons3}. For example, applying the affine corollary of Theorem \ref{theoremdirichletquadratic} to the hypersurface $M_{\form_0}$ yields an alternate proof of the case $\Theta = {\typewriter prod}$, $d = 2$ of \cite[Theorem 1.2]{FishmanSimmons3}.
\end{remark}

We are now ready to begin proving statements about the manifold $M_{\form_0}$ by using the decomposition $M_{\form_0} \equiv \P_\R^1\times\P_\R^1$. We begin by computing the number of rationals up to a given height:

\begin{proof}[An elementary proof of \eqref{specialtypecount}]
It is well-known that
\begin{equation}
\label{PQ1asymp}
\#\{[\pp]\in\P_\Q^1 : \numberq/2 < H_\std([\pp]) \leq \numberq\} \asymp_\times \#\{\pp\in\P_\Q^1 : H_\std([\pp]) \leq \numberq\} \asymp_\times \numberq^2.
\end{equation}
Now by \eqref{heightproduct},
\begin{align*}
\noreason N_{M_\form}(2^N) &=_\pt \#\{([\pp],[\qq])\in (\P_\Q^1)^2: H_\std([\pp])\cdot H_\std([\qq]) \leq 2^N\} \noreason \\
&=_\pt \sum_{n = 0}^N \#\left\{([\pp],[\qq])\in (\P_\Q^1)^2: 2^{n - 1} < H_\std([\pp])\leq 2^n, H_\std([\qq]) \leq \frac{2^N}{H_\std([\pp])}\right\} \noreason\\
&\asymp_\times \sum_{n = 0}^N \sum_{\substack{[\pp]\in \P_\Q^1 \\ 2^{n - 1} < H_\std([\pp]) \leq 2^n}} \left(\frac{2^N}{H_\std([\pp])}\right)^2 \by{\eqref{PQ1asymp}}\\
&\asymp_\times \sum_{n = 0}^N (2^{N - n})^2\#\{[\pp]\in\P_\Q^1 : 2^{n - 1} < H_\std([\pp]) \leq 2^n\} \noreason\\
&\asymp_\times \sum_{n = 0}^N (2^N)^2 \by{\eqref{PQ1asymp}}\\
&=_\pt (2^N)^2 (N + 1) \asymp_\times (2^N)^2 \log(2^N), \noreason
\end{align*}
demonstrating \eqref{specialtypecount} in the case $\numberq\in 2^\N$. The general case follows from a standard approximation argument.
\end{proof}
Next, we give an elementary proof of the convergence case of Theorem \ref{theoremkhinchinquadratic} for the manifold $M_{\form_0}$. This proof will give insight as to why in this case Theorem \ref{theoremkhinchinquadratic} does not simply state the converse of the (naive) Borel--Cantelli lemma; cf.\ Remark \ref{remarknontrivialrelation}.
\begin{remark}
In the following proof, we will assume that $\psi$ is regular, but we do not need to assume that $q\mapsto q\psi(q)$ is nonincreasing, as was assumed in the proof of Theorem \ref{theoremkhinchinquadratic}.
\end{remark}

\begin{proof}[Proof of the convergence case of Theorem \ref{theoremkhinchinquadratic} assuming $\form = \form_0$]
Let $\lambda$ denote normalized Lebesgue measure on $\P_\R^1$, and note that
\[
\lambda_{M_\form} \asymp_\times \Phi(\lambda\times\lambda).
\]
Let
\[
\A_\psi = \left\{([\xx],[\yy])\in (\P_\R^1)^2 :
\begin{aligned}
\text{there exist infinitely many $([\pp],[\qq])\in (\P_\Q^1)^2$ such that}\\
\dist([\pp],[\xx]),\dist([\qq],[\yy]) \leq \psi\left(H_\std([\pp])\cdot H_\std([\qq])\right)
\end{aligned}\right\}.
\]
Then $\A_{M_{\form_0}}(\psi) = \Phi(\A_\psi)$. So to prove the convergence case of Theorem \ref{theoremkhinchinquadratic}, we should show that $\lambda\times\lambda(\A_\psi) = 0$, assuming that the series
\begin{equation}
\label{loglog3}
\sum_{\numberq\in 2^\N} \numberq^2 \log\log \numberq \psi^2(\numberq)
\end{equation}
converges.

For each $n\geq 0$, let
\[
\ZZ_n = \left\{[\pp]\in\P_\Q^1: 2^n \leq H_\std([\pp]) < 2^{n + 1} \right\}.
\]
By \eqref{PQ1asymp}, we have
\[
\#(\ZZ_n) \asymp_\times (2^n)^2.
\]
Now fix $0\leq n\leq N$, and let
\[
A_{n,N} = B\big(\ZZ_n,C\psi(2^N)\big)\times B\big(\ZZ_{N - n},C\psi(2^N)\big),
\]
where $C > 0$ is a large constant. Since $\psi$ is regular (as assumed in Theorem \ref{theoremkhinchinquadratic}), 
if $C$ is large enough then
\[
\A_\psi \subset \limsup_{N\to\infty}\bigcup_{0\leq n\leq N}A_{n,N},
\]
and so by the Borel--Cantelli lemma, if the series
\begin{equation}
\label{AnNseries}
\sum_{N = 0}^\infty \sum_{n = 0}^N (\lambda\times\lambda)(A_{n,N})
\end{equation}
converges, then $(\lambda\times\lambda)(\A_\psi) = 0$. So to complete the proof, it suffices to show that $\eqref{AnNseries} \lesssim_\times \eqref{loglog3}$.

Fix $0\leq n\leq N$. We have
\[
(\lambda\times\lambda)(A_{n,N}) = \lambda\left(B\big(\ZZ_n,\psi(2^N)\big)\right)  \cdot \lambda\left(B\big(\ZZ_{N - n},\psi(2^N)\big)\right).
\]
Since $\lambda\big(B([\xx],\rho)\big) \asymp_\times r$ for all $[\xx]\in\P_\R^1$ and $0 < \rho \leq 1$, subadditivity gives
\begin{equation}
\label{Ahlforsbound}
\lambda\left(B\big(\ZZ_n,\psi(2^N)\big)\right) \lesssim_\times \#(\ZZ_n) \psi(2^N).
\end{equation}
However, in some cases it may be better to simply estimate from above by $\lambda(\P_\R^1) = 1$:
\begin{equation}
\label{stupidbound}
\lambda\left(B\big(\ZZ_n,\psi(2^N)\big)\right)  \leq 1.
\end{equation}
Similar bounds hold for $\lambda\left(B\big(\ZZ_{N - n},\psi(2^N)\big)\right)$. Thus
\begin{equation}
\label{AnNcalculation}
\begin{split}
(\lambda\times\lambda)(A_{n,N}) &\lesssim_\times \min\left(1,\#(\ZZ_n)\psi(2^N)\right)\min\left(1,\#(\ZZ_{N - n})\psi(2^N)\right)\\
&\asymp_\times \min\left(1,(2^n)^2\psi(2^N)\right)\min\left(1,(2^{N - n})^2\psi(2^N)\right)\\
&=_\pt \begin{cases}
(2^n)^2\psi(2^N) & n \leq N + \log_2\sqrt{\psi(2^N)}\\
(2^{N - n})^2\psi(2^N) & n \geq -\log_2\sqrt{\psi(2^N)}\\
(2^N)^2\psi^2(2^N) & \text{otherwise}
\end{cases}.
\end{split}
\end{equation}
The case $N + \log_2\sqrt{\psi(2^N)} \ge n \ge -\log_2\sqrt{\psi(2^N)}$ cannot occur (for all but finitely many $N$) since $\psi(2^N)$ is  less than $1/2^N$ for all sufficiently large $N$ (otherwise the series \eqref{loglog3} would diverge). 

Note that geometrically, the first two cases correspond to the bounds on $(\lambda\times\lambda)(A_{n,N})$ which result from covering $A_{n,N}$ by vertical and horizontal rectangles, respectively, while the third case corresponds to covering $A_{n,N}$ by squares.

Now fix $N$ and vary $0\leq n\leq N$. We have
\begin{align*}
\sum_{n = 0}^N (\lambda\times\lambda)(A_{n,N})
&\asymp_\times \sum_{n = 0}^{\lfloor N/2\rfloor}(\lambda\times\lambda)(A_{n,N}) \by{symmetry}\\
&\lesssim_\times \sum_{n = 0}^{\lfloor N + \log_2\sqrt{\psi(2^N)}\rfloor} (2^n)^2 \psi(2^N)
\hspace{.11 in} + \sum_{n = \lfloor N + \log_2\sqrt{\psi(2^N)}\rfloor + 1}^{\lfloor N/2\rfloor}(2^N)^2\psi^2(2^N) \noreason\\
&\asymp_\times \left(2^{N + \log_2\sqrt{\psi(2^N)}}\right)^2\psi(2^N)
\hspace{.18 in} + (2^N)^2\psi^2(2^N)\left(\frac{N}{2} - \big(N + \log_2\sqrt{\psi(2^N)}\big)\right)\noreason\qquad\qquad\\
&=_\pt (2^N)^2\psi^2(2^N) + (2^N)^2\psi^2(2^N)\frac{1}{2}\log_2\left(\frac{1}{2^N\psi(2^N)}\right)\noreason\\
&\asymp_\times (2^N)^2\psi^2(2^N)\log\left(\frac{1}{2^N\psi(2^N)}\right).
\end{align*}
Thus, for any function $\psi$ satisfying
\begin{equation}
\label{logequalsloglog}
\log\left(\frac{1}{q\psi(q)}\right) \lesssim_\times \log\log q\,,
\end{equation}
we have $\eqref{AnNseries}\lesssim_\times\eqref{loglog3}$, and thus the conclusion of Theorem \ref{theoremkhinchinquadratic} holds in the convergence case for such $\psi$.

To complete the proof, fix $\epsilon > 0$ and let
\[
\psi_\ast(q) = \frac{1}{q\log^{1/2 + \epsilon}q}\cdot
\]
Then $\psi_\ast$ satisfies \eqref{logequalsloglog}; moreover, \eqref{loglog3} converges at $\psi = \psi_\ast$. Given any function $\psi$, let
\[
\psi' = \max(\psi_\ast,\psi).
\]
Then if \eqref{loglog3} converges at $\psi$, it also converges at $\psi'$. Moreover, $\psi'$ satisfies \eqref{logequalsloglog}, so if \eqref{loglog3} converges at $\psi$, then $\A_{\psi'}$ is a nullset. But since $\psi'\geq \psi$, we have $\A_\psi \subset\A_{\psi'}$, so this completes the proof.
\end{proof}

\begin{remark}
\label{remarknontrivialrelation}
There are two important points to be made about the above proof. The first point is that the calculation \eqref{AnNcalculation} indicates what the nontrivial relation is which causes the series \eqref{loglog3} to differ from \eqref{convergence3}. Indeed, \eqref{AnNcalculation} shows that if $n\leq N + \log_2\sqrt{\psi(2^N)}$ or $n\geq -\log_2\sqrt{\psi(2^N)}$, then we are better off computing $(\lambda\times\lambda)(A_{n,N})$ not by simply adding the measures of the squares $$B\big(\cdot,C\psi(2^N)\big)\times B\big(\cdot,C\psi(2^N)\big)$$ which define $A_{n,N}$, but by estimating the measure of $A_{n,N}$ in terms of the rectangles $$B\big(\cdot,\psi(2^N)\big)\times\P_\R^1\quad\text{or}\quad\P_\R^1\times B\big(\cdot,\psi(2^N)\big)$$ respectively. Inside each rectangle there are many overlapping squares, and this overlap is what causes the difference in the series.

The second point is that we should not expect there to be a difference in series for the Jarn\'ik--Besicovitch theorem if $s < {d-1}$. Indeed, the same argument would work up until the point where the inequality \eqref{logequalsloglog} is required. But when $s < {d-1}$, then the $\psi$ which we ``expect to see'' (i.e.\ those which are near the boundary of convergence/divergence) will satisfy
\[
\log\left(\frac{1}{q\psi(q)}\right) \asymp_\times \log q 
\]
rather than \eqref{logequalsloglog}. Thus the ``refined argument'' for the convergence case produces in this case the same series \eqref{convergence3}.
\end{remark}

\ignore{\appendix
\section{Comments on Theorem \ref{theoremheathbrown}}
\label{appendixliberties}

In this appendix we clarify the relation between the paraphrased version of \cite[Theorems 5, 6, 7, and 8]{Heath-Brown_quadratic} which appears in the introduction (namely Theorem \ref{theoremheathbrown}) with the original theorems as they appear in \cite{Heath-Brown_quadratic}. We make the following comments:

\begin{itemize}
\item[1.] The relation between counting primitive vectors and counting all lattice vectors is clarified on \cite[p.12]{Heath-Brown_quadratic}.
\item[2.] In \cite{Heath-Brown_quadratic}, the function $w$ is required to be $\CC^\infty$, but to deduce Theorem \ref{theoremheathbrown}, one must let $w = \one_{B(\0,1)}$. Since $w_0 = \one_{B(\0,1)}$ can be approximated from above and below by $\CC^\infty$ functions $w_n$ in a way such that $\sigma_\infty(F,w_n)\to \sigma_\infty(F,w_0) \in (0,\infty)$, \cite[Theorems 5, 6, 7, and 8]{Heath-Brown_quadratic} will hold only without an estimate on the error term; namely, we have
\[
\lim_{P\to\infty}\frac{N(F,w_0)}{\text{leading term}} = 1
\]
for each result in \cite{Heath-Brown_quadratic}. In Theorem \ref{theoremheathbrown} we have stated only the weaker conclusion that the left hand side is bounded from above and below (in limsup and liminf respectively).
\item[3.] In \cite{Heath-Brown_quadratic}, it is shown that the modified singular series $\sigma^*$ is positive and finite if and only if the equation $\form = 0$ has nontrivial solutions in every $p$-adic field. Since the forms we deal with satisfy $\P_\Q^d\cap M_\form\neq\emptyset$, the equation $\form = 0$ has nontrivial solutions over $\Q$, and so certainly over every $p$-adic field.
\item[4.] The forms considered in \cite[Theorem 7]{Heath-Brown_quadratic} are precisely the forms $\form$ for which $\form\sim\form_0$, as we show now:
\end{itemize}

\begin{proposition}
\label{propositiondetsquare}
Let $\form:\R^4\to\R$ be a quadratic form with rational coefficients, and suppose that $\P_\Q^3\cap M_\form\neq\emptyset$. Then $\form\sim\form_0$ (i.e.\ $\Qrank = 2$) if and only if the determinant of $\form$ is the square of a rational number.
\end{proposition}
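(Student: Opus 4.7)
The plan is to reduce both directions of the equivalence to the observation that both conditions appearing in the proposition -- namely $\Qrank(\form) = 2$ and $\det(\form) \in (\Q^*)^2$ -- are invariant under the action $\form \mapsto \form \circ g$ with $g \in \GL_4(\Q)$. Indeed, for the rank side this is immediate from Definition \ref{definitionrank}, while for the determinant side it follows from the standard matrix identity $\det(\form \circ g) = (\det g)^2 \det(\form)$. Moreover, by Definition \ref{definitiontypes}, $\form \sim \form_0$ is equivalent to $\Qrank = \Rrank = 2$, and by Corollary \ref{corollaryrankbound} we always have $\Rrank \leq (d+1)/2 = 2$, so the statement we must actually prove is simply $\Qrank(\form) = 2 \iff \det(\form) \in (\Q^*)^2$.

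For the forward direction, suppose $\Qrank(\form) = 2$. Then Proposition \ref{propositionrenormalization} produces $\matrix \in \GL_4(\Q)$ such that $\form \circ \matrix$ is the $2$-normalized form $(x_0,x_1,x_2,x_3) \mapsto x_0 x_3 + x_1 x_2$. A further conjugation by $\diag(1,1,-1,1)$ sends this to $\form_0$. A direct computation of the symmetric matrix of $\form_0$ yields $\det(\form_0) = 1/16 \in (\Q^*)^2$, so by $\Q$-conjugation invariance $\det(\form) \in (\Q^*)^2$ as well.

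For the converse, since $\Qrank \geq 1$ by hypothesis, Proposition \ref{propositionrenormalization} lets us assume $\form$ is $1$-normalized, i.e.\ $\form(\xx) = x_0 x_3 + \w\form(x_1, x_2)$ where $\w\form(x_1,x_2) = a x_1^2 + 2b x_1 x_2 + c x_2^2$ has rational coefficients. A direct calculation with the symmetric matrix of $\form$ (expanding along the first column and then along the first row of the resulting minor) shows
\[
\det(\form) \;=\; \tfrac{1}{4}(b^2 - ac).
\]
Thus the hypothesis $\det(\form) \in (\Q^*)^2$ is equivalent to the discriminant $b^2 - ac$ being a rational square, which by the classical theory of binary quadratic forms is equivalent to $\w\form$ factoring as a product of linear forms over $\Q$, equivalently to the existence of $(v_1, v_2) \in \Q^2 \setminus \{\0\}$ with $\w\form(v_1, v_2) = 0$. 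Given such a $(v_1,v_2)$, one checks directly from the shape of the $1$-normalized form that $\ee_0$ and $(0, v_1, v_2, 0)$ are isotropic and mutually $B_\form$-orthogonal, so they span a $2$-dimensional totally isotropic $\Q$-subspace of $\R^4$. This gives $\Qrank \geq 2$, and combined with $\Rrank \leq 2$ we conclude $\Qrank = 2$.

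There is no serious obstacle; the one thing to be careful about is that the reductions in the two directions use Proposition \ref{propositionrenormalization} with different values of $m$ (namely $m=1$ in the converse, producing a binary remainder to analyze, and $m=2$ in the forward direction, producing the canonical form directly). Beyond bookkeeping of matrix conventions, the proof is essentially a sequence of elementary verifications.
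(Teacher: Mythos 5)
Your proof is correct and follows essentially the same route as the paper: both directions reduce to the $\GL_4(\Q)$-invariance of the two conditions, the forward direction computes $\det(\form_0)=1/16$, and the converse $1$-normalizes and produces a second rational isotropic vector orthogonal to $\ee_0$ from the binary remainder. The only cosmetic difference is that the paper first diagonalizes the remainder and writes the isotropic vector explicitly as $(0,a_2,\sqrt{-a_1a_2},0)$, whereas you keep the general remainder and invoke the discriminant criterion for binary forms.
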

\begin{proof}
Recall that the \emph{determinant} of a quadratic form $\form:\R^4\to\R$ is the determinant of the linear map $\phi_\form:\R^4\to (\R^4)^*\equiv\R^4$ defined by $\xx\mapsto B_\form(\xx,\cdot)$. It is not invariant under the action of $\GL_4(\R)$ but rather possesses the property that $\det(\form\circ\phi) = \det(\form)\det(\phi)^2$. In particular, if $\form_1$ and $\form_2$ are equivalent over $\Q$, then $\det(\form_1)$ is a square if and only if $\det(\form_2)$ is. Thus the forward direction follows immediately upon calculating that $\det(\form_0) = 1/16$.

For the reverse direction, suppose that $\det(\form)$ is a square. By Proposition \ref{propositionrenormalization}, we may without loss of generality assume that $\form$ is $1$-normalized. In this case, we have $\det(\form) = -(1/4)\det(\w\form)$ where $\w\form$ is the remainder of $\form$. By the well-known canonical form of quadratic forms, we may without loss of generality assume that $\w\form(\xx) = a_1 x_1^2 + a_2 x_2^2$ for some $a_1,a_2\in\Q$. Then $-\det(\w\form) = -a_1 a_2$ is a square. Thus $\bb := (0,a_2,\sqrt{-a_1 a_2},0)\in\Q^4$, and $\R\ee_0 + \R\bb$ is a totally isotropic subspace of dimension $2$, proving that $\Qrank = 2$. Proposition \ref{propositionrenormalization} gives $\form\sim\form_0$.
\end{proof}}

\bibliographystyle{amsplain}

\bibliography{bibliography}

\end{document}